\numberwithin{equation}{section} %%% Equations numbered by section. If you don't want it, please delete it.
\newtheorem{theorem}{Theorem}[section]
\newtheorem{lemma}[theorem]{Lemma}
\newtheorem{proposition}[theorem]{Proposition}
\newtheorem{corollary}[theorem]{Corollary}
\theoremstyle{definition}
\newtheorem{definition}[theorem]{Definition}
\newtheorem{example}[theorem]{Example}
\theoremstyle{remark}
\newtheorem{remark}[theorem]{Remark}
\numberwithin{equation}{section}
\begin{document}
%%%%%%%%%%%%%%%%%%%%%%%%%%

\title{Diskcyclicity of sets of operators and applications}
\author{ Mohamed Amouch and Otmane Benchiheb}
\address{Mohamed Amouch and Otmane Benchiheb,
University Chouaib Doukkali.
Department of Mathematics, Faculty of science
Eljadida, Morocco}
\email{amouch.m@ucd.ac.ma}

\email{otmane.benchiheb@gmail.com}
\subjclass[2010]{47A16}
\keywords{Hypercyclicity, supercyclicity, diskcyclicity, $C_0$-semigroups of operators, $C$-regularized groups of operators.}

\begin{abstract}
In this paper, we introduce and study the diskcyclicity and disk transitivity of a set of operators. We establish a diskcyclicity criterion and we give the relationship between this criterion and the diskcyclicity. As applications, we study the diskcyclicty of $C_0$-semigroups and $C$-regularized groups. We show that a diskcyclic $C_0$-semigroup exists on a complex topological vector space $X$ if and only if dim$(X)=1$ or dim$(X)=\infty$ and we prove that diskcyclicity and disk transitivity of a $C_0$-semigroups (resp $C$-regularized groups) are equivalent.
\end{abstract}

%-----------------------------------

\maketitle

\section{Introduction and Preliminary}
%%%%%%%%%%%%%%%%%%%%%%%%%%%%%%%%%%%%%%%%%%%%%%%%%%%%%%%%%%%%%%%%%%%%%%%%%%%%%%%%%%%%%%%%%%%%%%%%%%%%%%%%%%
Let $X$ be a complex topological vector space and $\mathcal{B}(X)$ the space of all continuous linear operators on $X$. By an operator, we always mean a continuous linear operator.

%%%%%%%%%%%%%%%%%%%%%%%%%%%%%%%%%%%%%%%%%%%%%%%%%%%%%%%%%%%%%%%%%%%%%%%%%%%%%%%%%%%%%%%%%%%%%%%%%%%%%%%%%%
The most studied notion in linear dynamics is that of hypercyclicity$:$
An operator $T$ is called hypercyclic if there is some vector $x \in X$ such that the orbit of $x$ under $T$;
$$Orb(T,x)=\{T^nx \mbox{ : } n \in\mathbb{N}\},$$
is a dense subset of $X$, such a vector $x$ is called hypercyclic
for $T$. The set of all hypercyclic vectors for $T$ is denoted by $HC(T)$. The first example of hypercyclic operator was given by Rolewicz in \cite{Rolewicz}. 
He proved that if $B$ is a backward shift on the Banach space $\ell^p(\mathbb{N})$, then $\lambda B$ is a hypercyclic operator
for any complex number $\lambda$ satisfying $\vert\lambda\vert>1$.
 Another important notion in dynamical system is that of supercyclicity. This notion was introduced by Hilden and Wallen in \cite{HW}. An operator T is called supercyclic if there is a vector $x \in X$ such that the protective orbit of $x$ under $T$;
$$\mathbb{C}Orb(T,x)=\{\lambda T^nx \mbox{ : }\lambda\in\mathbb{C}\mbox{, } n \in\mathbb{N}\},$$
 is dense in $X$. In this case, the vector $x$ is said to be a supercyclic
vector for $T$. The set of all supercyclic vectors for $T$ is denoted by $SC(T)$. For the more detailed information on both hypercyclicity and supercyclicity,
the reader may refer to \cite{Bayart Matheron,Erdmann Peris}.
%%%%%%%%%%%%%%%%%%%%%%%%%%%%%%%%%%%%%%%%%%%%%%%%%%%%%%%%%%%%%%%%%%%%%%%%%%%%%%%%%%%%%%%%%%%%%%%%%%%%%%%%%%

In the same spirit, since the operator $\lambda B$ is not hypercyclic whenever $\vert \lambda\vert\leq1$, we
are motivated to study the disk orbit. An operator $T$ is said to be diskcyclic if there is some vector $x\in X$ such that the disk orbit;
$$\mathbb{D}Orb(T,x)=\{\alpha T^{n}x\mbox{ : }\alpha\in\mathbb{D}\mbox{, } n\geq0\},$$
 is dense in $X$. Such vector $x$ is called a diskcyclic vector for $T$, and the set of all diskcyclic vectors for $T$ is denoted by $\mathbb{D}C(T)$.
%%%%%%%%%%%%%%%%%%%%%%%%%%%%%%%%%%%%%%%%%%%%%%%%%%%%%%%%%%%%%%%%%%%%%%%%%%%%%%%%%%%%%%%%%%%%%%%%%%%%%%%%%%

An equivalent notion of the diskcyclicity in the case of a  a second countable complex topological vector space is that of disk transitivity.
%%%%%%%%%%%%%%%%%%%%%%%%%%%%%%%%%%%%%%%%%%%%%%%%%%%%%%%%%%%%%%%%%%%%%%%%%%%%%%%%%%%%%%%%%%%%%%%%%%%%%%%%%% 
An operator $T$ is said to be disk transitive if for each pair $(U,V)$ of nonempty open subsets of $X$ there exist $\alpha\in\mathbb{D}$ and $n\geq0$ such that 
$$\alpha T^{n}(U)\cap V\neq \emptyset.$$
%%%%%%%%%%%%%%%%%%%%%%%%%%%%%%%%%%%%%%%%%%%%%%%%%%%%%%%%%%%%%%%%%%%%%%%%%%%%%%%%%%%%%%%%%%%%%%%%%%%%%%%%%%
The diskcyclic criterion; a sufficient
set of conditions for an operator to be diskcyclic, is one of most important characterization of the diskcyclicity. An operator $T$ is said to be satisfies the diskcyclicity criterion if there exist an increasing sequence of integers $(n_k)$, a sequence $(\alpha_{n_k})\subset\mathbb{D}\setminus\{0\}$, two dense sets $X_0$, $Y_0\subset X$ and a sequence of maps $S_{n_k}$ : $Y_0\longrightarrow X$ such that$:$
\begin{itemize}
\item[$(i)$] $\alpha_{n_k}T^{n_k}x\longrightarrow 0$ for any $x\in X_0$;
\item[$(ii)$] $\alpha_{n_k}^{-1}S_{n_k}y\longrightarrow 0$ for any $y\in Y_0$;
\item[$(iii)$] $T^{n_k} S_{n_k}y\longrightarrow y$ for any $y\in Y_0$.
\end{itemize}
%%%%%%%%%%%%%%%%%%%%%%%%%%%%%%%%%%%%%%%%%%%%%%%%%%%%%%%%%%%%%%%%%%%%%%%%%%%%%%%%%%%%%%%%%%%%%%%%%%%%%%%%%%
For a general overview of diskcyclicity and related proprieties in linear dynamics see \cite{BKN,LZ,LZ1,WZ}.
%%%%%%%%%%%%%%%%%%%%%%%%%%%%%%%%%%%%%%%%%%%%%%%%%%%%%%%%%%%%%%%%%%%%%%%%%%%%%%%%%%%%%%%%%%%%%%%%%%%%%%%%%%

Let $X$ and $Y$ be topological vector spaces. If $T\in\mathcal{B}(X)$ and $S\in\mathcal{B}(Y)$, then $T$ and $S$ are called quasi-conjugate or quasi-similar if there exists an operator $\phi$ : $X\longrightarrow Y$ with dense range such that $S\circ\phi=\phi\circ T.$ If $\phi$ can be chosen to be a homeomorphism, then $T$ and $S$ are called conjugate or similar, see \cite[Definition 1.5]{Erdmann Peris}.
A property $\mathcal{P}$ is said to be preserved under quasi-similarity if for all $T\in\mathcal{B}(X)$, if $T$ has property $\mathcal{P}$, then every operator $S\in\mathcal{B}(Y)$ that is quasi-similar to $T$ has also property $\mathcal{P}$, see \cite[Definition 1.7]{Erdmann Peris}. 
%%%%%%%%%%%%%%%%%%%%%%%%%%%%%%%%%%%%%%%%%%%%%%%%%%%%%%%%%%%%%%%%%%%%%%%%%%%%%%%%%%%%%%%%%%%%%%%%%%%%%%%%%%

%%%%%%%%%%%%%%%%%%%%%%%%%%%%%%%%%%%%%%%%%%%%%%%%%%%%%%%%%%%%%%%%%%%%%%%%%%%%%%%%%%%%%%%%%%%%%%%%%%%%%%%%%%

Recall that the strong operator topology (SOT for short) on $\mathcal{B}(X)$ is the topology with respect to which any $T\in \mathcal{B}(X)$ has a neighborhood basis consisting of sets of the form
$$\Omega=\{S\in\mathcal{B}(X) \mbox{ : }Se_i-Te_i\in U\mbox{, }i=1,2,\dots,k\}$$
 where $k\in\mathbb{N}$, $e_1,e_2,\dots e_k\in X$ are linearly independent and $U$ is a neighborhood of zero in $X$, see \cite{Conway}.
%%%%%%%%%%%%%%%%%%%%%%%%%%%%%%%%%%%%%%%%%%%%%%%%%%%%%%%%%%%%%%%%%%%%%%%%%%%%%%%%%%%%%%%%%%%%%%%%%%%%%%%%%
 
Recall from \cite{AOH} that a set $\Gamma$ of operators is called hypercyclic if there exists a vector $x$ in $X$ such that its orbit under $\Gamma$; 
$Orb(\Gamma,x)=\{Tx \mbox{ : }T\in\Gamma\}$, is dense in $X$. 
If the protective orbit of $x$ under $\Gamma$; $\mathbb{C}Orb(\Gamma,x)=\{\alpha Tx\mbox{ : }T\in\Gamma\mbox{, }\alpha\in\mathbb{C}\}$, is dense in $X$, then $\Gamma$ is said to be supercyclic, see \cite{AOS}. If 
span$\{Orb(\Gamma,x)\}$ is dense in $X$, then $\Gamma$ is said to be cyclic, see \cite{AOC}. In each case,
such a vector x is called a hypercyclic vector, a supercyclic vector, and a cyclic vector for $\Gamma$,
respectively.

%%%%%%%%%%%%%%%%%%%%%%%%%%%%%%%%%%%%%%%%%%%%%%%%%%%%%%%%%%%%%%%%%%%%%%%%%%%%%%%%%%%%%%%%%%%%%%%%%%%%%%%%%%
In this paper, we introduce and study the notion of diskcyclicity for a set of operators which
generalize the notion of diskcyclicity for a single operator. We deal with diskcyclic sets, we prove that some properties known for one diskcyclic operator remain true for a diskcyclic set of operators.
It known that the set of diskcyclic vectors of a single operators is a $G_\delta$ type set. In section 2, we prove that this result holds for the set of diskcyclic vectors of a set of operators and we establish that diskcyclicity is preserved under quasi-similarity.

 In section 3, we introduce the notion of disk transitive sets, strictly disk transitive sets, diskcyclic transitive sets, and the notion of diskcyclic criterion for sets of operators. We give relations between this notions and the concept of diskcyclic sets of operators and we prove that this notions are preserved under quasi-similarity or similarity. 
%%%%%%%%%%%%%%%%%%%%%%%%%%%%%%%%%%%%%%%%%%%%%%%%%%%%%%%%%%%%%%%%%%%%%%%%%%%%%%%%%%%%%%%%%%%%%%%%%%%%%%%%%% 
 
 In section 4, we give applications for strongly continues semigroups of operators. We show that a diskcyclic strongly continues semigroup of operators exists on a complex topological vector space $X$ if and only if dim$(X)=1$ or dim$(X)=\infty$. Finally, we prove the equivalence between diskcyclicity and disk transitivity and we give necessary and sufficient conditions for a strongly continues semigroup of operators to be diskcyclic.
 
%%%%%%%%%%%%%%%%%%%%%%%%%%%%%%%%%%%%%%%%%%%%%%%%%%%%%%%%%%%%%%%%%%%%%%%%%%%%%%%%%%%%%%%%%%%%%%%%%%%%%%%%%%  
In section 5, we study the particular case when $\Gamma$ stands for a $C$-regularized group of operators. We give an example of a $C$-regularized group of operators on the complex field $\mathbb{C}$ and we prove that, if $(S(z))_{z\in\mathbb{C}}$ is a diskcyclic $C$-regularized group of operators and $C$ has dense range, then $(S(z))_{z\in\mathbb{C}}$ is disk transitive.
%%%%%%%%%%%%%%%%%%%%%%%%%%%%%%%%%%%%%%%%%%%%%%%%%%%%%%%%%%%%%%%%%%%%%%%%%%%%%%%%%%%%%%%%%%%%%%%%%%%%%%%%%% 

\section{ Diskcyclic sets of operators}
%%%%%%%%%%%%%%%%%%%%%%%%%%%%%%%%%%%%%%%%%%%%%%%%%%%%%%%%%%%%%%%%%%%%%%%%%%%%%%%%%%%%%%%%%%%%%%%%%%%%%%%%%%
Our main definition in this paper is that of diskcyclic set of operators. This definition is a generalization of the definition of a single diskcyclic operator.
%%%%%%%%%%%%%%%%%%%%%%%%%%%%%%%%%%%%%%%%%%%%%%%%%%%%%%%%%%%%%%%%%%%%%%%%%%%%%%%%%%%%%%%%%%%%%%%%%%%%%%%%%%
\begin{definition}
We say that a set $\Gamma\subset \mathcal{B}(X)$ is diskcyclic if there exists some $x\in X$ for which the disk orbit of $x$ under $\Gamma;$
$$\mathbb{D} Orb(\Gamma,x):=\{\alpha Tx \mbox{ : }\alpha\in\mathbb{D} \mbox{, }T\in\Gamma\},$$
 is a dense subset of $X$. 
Such vector $x$ is called a diskcyclic vector for $\Gamma$ or a diskcyclic vector. The set of all diskcyclic vectors for $\Gamma$ is denoted by $\mathbb{D}C(\Gamma)$.
\end{definition}
%%%%%%%%%%%%%%%%%%%%%%%%%%%%%%%%%%%%%%%%%%%%%%%%%%%%%%%%%%%%%%%%%%%%%%%%%%%%%%%%%%%%%%%%%%%%%%%%%%%%%%%%%%
\begin{remark}
An operator $T\in\mathcal{B}(X)$ is diskcyclic if and only if the set
$\Gamma=\{T^n\mbox{ : }n\geq0\}$
is diskcyclic.
\end{remark}
%%%%%%%%%%%%%%%%%%%%%%%%%%%%%%%%%%%%%%%%%%%%%%%%%%%%%%%%%%%%%%%%%%%%%%%%%%%%%%%%%%%%%%%%%%%%%%%%%%%%%%%%%%
A necessary bu not sufficient condition of diskcyclicity is due to the next proposition. 
%%%%%%%%%%%%%%%%%%%%%%%%%%%%%%%%%%%%%%%%%%%%%%%%%%%%%%%%%%%%%%%%%%%%%%%%%%%%%%%%%%%%%%%%%%%%%%%%%%%%%%%%%%
\begin{proposition}\label{pr1}
If $x$ is a diskcyclic vector for a set $\Gamma$, then
$$\sup\{\Vert\alpha Tx \Vert \mbox{ : }\alpha\in\mathbb{D}\mbox{, }T\in\Gamma \}=+\infty. $$
\end{proposition}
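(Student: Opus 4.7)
The proof I envision is a short contradiction argument, essentially observing that the disk orbit of $x$ cannot be confined to a bounded ball if it is to be dense. First I would assume, for contradiction, that the quantity
\[
M := \sup\{\Vert \alpha T x\Vert : \alpha \in \mathbb{D},\, T \in \Gamma\}
\]
is finite. By the very definition of $M$, every element of $\mathbb{D}Orb(\Gamma,x)$ then lies in the closed ball $\overline{B}(0,M) = \{y \in X : \Vert y \Vert \leq M\}$.

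Next I would invoke diskcyclicity of $x$ for $\Gamma$. Since $\mathbb{D}Orb(\Gamma,x)$ is dense in $X$ and $\overline{B}(0,M)$ is closed, taking closures gives
\[
X \;=\; \overline{\mathbb{D}Orb(\Gamma,x)} \;\subseteq\; \overline{B}(0,M).
\]
To finish, I would produce a vector in $X$ of norm strictly greater than $M$: picking any $z \in X\setminus\{0\}$ (which exists because $X$ is assumed nontrivial), the vector $\tfrac{M+1}{\Vert z\Vert}\,z$ belongs to $X$ yet has norm $M+1$, contradicting the containment $X \subseteq \overline{B}(0,M)$.

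I do not anticipate any substantive obstacle; the argument is essentially one line once the contradiction framework is set up. The only subtle point is a matter of presentation: the paper's ambient space $X$ is introduced as a complex topological vector space, while the proposition uses norm notation $\Vert \cdot \Vert$. For the statement to be meaningful one is implicitly assuming the normed (or Banach) space setting, and one needs $X \neq \{0\}$ so that vectors of arbitrarily large norm exist; both are harmless standing conventions in this context.
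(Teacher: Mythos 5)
Your proposal is correct and is essentially the same argument as the paper's: both assume the supremum is a finite $m$, observe that density of $\mathbb{D}Orb(\Gamma,x)$ forces every vector of $X$ to have norm at most $m$, and contradict this by exhibiting a vector of larger norm. Your added remarks about the implicit normed-space setting and $X\neq\{0\}$ are reasonable but do not change the substance.
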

%%%%%%%%%%%%%%%%%%%%%%%%%%%%%%%%%%%%%%%%%%%%%%%%%%%%%%%%%%%%%%%%%%%%%%%%%%%%%%%%%%%%%%%%%%%%%%%%%%%%%%%%%%
\begin{proof}
Towards a contradiction, assume that
$ \sup\{\Vert\alpha Tx \Vert \mbox{ : }\alpha\in\mathbb{D}\mbox{, }T\in\Gamma \}=m<+\infty,$
and let $y\in X$ such that $\Vert y \Vert>m$. Since $x\in \mathbb{D}C(\Gamma)$, there exists sequences $\{k\}$ of $\mathbb{N},$ $\{\alpha_k\}$ of $\mathbb{D}$ and $\{T_k\}$ of $\Gamma$ such that
$\alpha_k T_kx\longrightarrow y.$
It follows that $\Vert y \Vert\leq m$ and this is a contradiction.
\end{proof}
%%%%%%%%%%%%%%%%%%%%%%%%%%%%%%%%%%%%%%%%%%%%%%%%%%%%%%%%%%%%%%%%%%%%%%%%%%%%%%%%%%%%%%%%%%%%%%%%%%%%%%%%%%
\begin{remark}
If $\Gamma\subset\mathcal{B}(X)$ is bounded, then $\Gamma$ can not be diskcyclic. Indeed,
let $x\in X$. Since $\Gamma$ is bounded, $\sup\{\Vert\alpha Tx \Vert \mbox{ : }\alpha\in\mathbb{D}\mbox{, }T\in\Gamma \}<\infty.$
By Proposition \ref{pr1}, $\Gamma$ can not be diskcyclic.
\end{remark}
%%%%%%%%%%%%%%%%%%%%%%%%%%%%%%%%%%%%%%%%%%%%%%%%%%%%%%%%%%%%%%%%%%%%%%%%%%%%%%%%%%%%%%%%%%%%%%%%%%%%%%%%%%
Let $X$ be a complex topological vector space and  $\Gamma$ a subset $ \mathcal{B}(X).$ We denote by $\{\Gamma\}^{'}$ the set of all elements of $\mathcal{B}(X)$ which commutes with every element of $\Gamma.$ That is
$$ \{\Gamma\}^{'}:=\{S\in\mathcal{B}(X) \mbox{ : }TS=ST \mbox{ for all }T\in\Gamma\}. $$
%%%%%%%%%%%%%%%%%%%%%%%%%%%%%%%%%%%%%%%%%%%%%%%%%%%%%%%%%%%%%%%%%%%%%%%%%%%%%%%%%%%%%%%%%%%%%%%%%%%%%%%%%%
\begin{proposition}\label{p1}
Assume that $\Gamma\subset\mathcal{B}(X)$ is diskcyclic and let $T\in\mathcal{B}(X)$ be an operator with dense range. If $T\in \{\Gamma\}^{'}$, then $Tx\in \mathbb{D}C(\Gamma)$, for all $x\in \mathbb{D}C(\Gamma)$.
\end{proposition}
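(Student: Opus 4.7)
The plan is to show directly that $\mathbb{D}Orb(\Gamma, Tx)$ is dense in $X$, by rewriting it as the image under $T$ of the disk orbit of $x$.

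First, I would exploit the hypothesis $T \in \{\Gamma\}'$ to swap $T$ past each element of $\Gamma$. For any $\alpha \in \mathbb{D}$ and $S \in \Gamma$, commutativity gives $\alpha S(Tx) = \alpha T(Sx) = T(\alpha Sx)$, and therefore
$$\mathbb{D}Orb(\Gamma, Tx) = \{\alpha S(Tx) : \alpha \in \mathbb{D},\, S \in \Gamma\} = T\bigl(\mathbb{D}Orb(\Gamma, x)\bigr).$$

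Second, I would argue that the image under $T$ of any dense subset of $X$ is again dense in $X$, using continuity together with the dense range assumption. Setting $D := \mathbb{D}Orb(\Gamma, x)$, we have $\overline{D} = X$ since $x \in \mathbb{D}C(\Gamma)$. Continuity of $T$ yields $T(X) = T(\overline{D}) \subseteq \overline{T(D)}$. Taking closures and using that $T$ has dense range,
$$X = \overline{T(X)} \subseteq \overline{T(D)},$$
so $T(D)$ is dense in $X$.

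Combining the two steps, $\mathbb{D}Orb(\Gamma, Tx) = T(D)$ is dense in $X$, which means $Tx \in \mathbb{D}C(\Gamma)$. The only subtlety is the first step: the commutation $ST = TS$ must hold for every $S \in \Gamma$ so that the factorization $\mathbb{D}Orb(\Gamma, Tx) = T(\mathbb{D}Orb(\Gamma, x))$ is valid, but this is exactly what $T \in \{\Gamma\}'$ provides, so I do not expect any real obstacle.
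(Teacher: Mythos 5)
Your proposal is correct and follows essentially the same route as the paper: both arguments hinge on the commutation $\alpha S(Tx)=\alpha T(Sx)$ and on the fact that a continuous operator with dense range carries dense sets to dense sets (the paper phrases this via preimages $T^{-1}(O)$ of nonempty open sets, you via $T(\overline{D})\subseteq\overline{T(D)}$, which is the same ingredient). No gaps.
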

%%%%%%%%%%%%%%%%%%%%%%%%%%%%%%%%%%%%%%%%%%%%%%%%%%%%%%%%%%%%%%%%%%%%%%%%%%%%%%%%%%%%%%%%%%%%%%%%%%%%%%%%%%
\begin{proof}
Let $O$ be a nonempty open subset of $X$. Since $T$ is continuous and of dense range, $T^{-1}(O)$ is a nonempty open subset of $X$. Let $x\in \mathbb{D}C(\Gamma)$, then there exist $\alpha\in\mathbb{D}$ and $S\in \Gamma$  such that $\alpha Sx\in T^{-1}(O)$, that is $\alpha T(Sx)\in O$. Since $T\in \{\Gamma\}^{'}$, it follows that
$$\alpha S(Tx)=\alpha T(S x)\in O.$$
Hence, $\mathbb{D}Orb(\Gamma,Tx)$  meets every nonempty open subset of $X$. From this, $\mathbb{D}Orb(\Gamma,Tx)$ is dense in $X$. That is, $Tx\in \mathbb{D}C(\Gamma)$. 
\end{proof}
%%%%%%%%%%%%%%%%%%%%%%%%%%%%%%%%%%%%%%%%%%%%%%%%%%%%%%%%%%%%%%%%%%%%%%%%%%%%%%%%%%%%%%%%%%%%%%%%%%%%%%%%%%
\begin{remark}
If $x\in \mathbb{D}C(\Gamma)$, then $\alpha x\in\mathbb{D}C(\Gamma)$, for all $\alpha \in \mathbb{C}\setminus\{0\}.$
\end{remark}
%%%%%%%%%%%%%%%%%%%%%%%%%%%%%%%%%%%%%%%%%%%%%%%%%%%%%%%%%%%%%%%%%%%%%%%%%%%%%%%%%%%%%%%%%%%%%%%%%%%%%%%%%% 

Let $X$ and $Y$ be topological vector spaces and let $\Gamma\subset \mathcal{B}(X)$ and $\Gamma_1\subset \mathcal{B}(Y)$. Recall from \cite{AOC}, that $\Gamma$ and $\Gamma_1$ are called quasi-similar if there exists an operator $\phi$ : $X\longrightarrow Y$ with dense range such that for all $T\in\Gamma,$ there exists $S\in\Gamma_1$ satisfying $S\circ\phi=\phi\circ T$. If $\phi$ is a
homeomorphism, then $\Gamma$ and $\Gamma_1$ are called similar.
%%%%%%%%%%%%%%%%%%%%%%%%%%%%%%%%%%%%%%%%%%%%%%%%%%%%%%%%%%%%%%%%%%%%%%%%%%%%%%%%%%%%%%%%%%%%%%%%%%%%%%%%%%

The following proposition shows that the diskcyclicity of sets of operators is preserved under quasi-similarity.
%%%%%%%%%%%%%%%%%%%%%%%%%%%%%%%%%%%%%%%%%%%%%%%%%%%%%%%%%%%%%%%%%%%%%%%%%%%%%%%%%%%%%%%%%%%%%%%%%%%%%%%%%%
\begin{proposition}\label{14}
Assume that $\Gamma$ and $\Gamma_1$ are quasi-similar, then $\Gamma$ is diskcyclic in $X$ implies that $\Gamma_1$ is diskcyclic in $Y$. Moreover, 
$$\phi(\mathbb{D}C(\Gamma)\subset \mathbb{D}C(\Gamma_1).$$
\end{proposition}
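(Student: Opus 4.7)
The plan is to show directly that for any $x \in \mathbb{D}C(\Gamma)$, the image $\phi(x)$ lies in $\mathbb{D}C(\Gamma_1)$; the first claim of the proposition follows immediately from this inclusion since $\mathbb{D}C(\Gamma)$ is nonempty by hypothesis.

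The key geometric idea is to transport open sets in $Y$ back to open sets in $X$ via $\phi$. Given a nonempty open set $V \subset Y$, I first need $\phi^{-1}(V)$ to be a nonempty open subset of $X$. Openness is automatic from continuity of $\phi$; nonemptiness comes from the fact that $\phi$ has dense range, which forces $\phi(X) \cap V \neq \emptyset$. Using diskcyclicity of $x$ for $\Gamma$, I can then find $\alpha \in \mathbb{D}$ and $T \in \Gamma$ with $\alpha T x \in \phi^{-1}(V)$.

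The remaining step is to transfer this to the $\Gamma_1$-orbit of $\phi(x)$. By the quasi-similarity hypothesis, for the chosen $T \in \Gamma$ there exists some $S \in \Gamma_1$ with $S \circ \phi = \phi \circ T$. Applying $\phi$ and using linearity yields
\[
\alpha S \phi(x) = \alpha \phi(Tx) = \phi(\alpha Tx) \in V,
\]
so $\mathbb{D}Orb(\Gamma_1, \phi(x))$ meets $V$. Since $V$ was an arbitrary nonempty open subset of $Y$, the disk orbit is dense, i.e., $\phi(x) \in \mathbb{D}C(\Gamma_1)$.

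There is no real obstacle here; the proof is a formal chase through the defining diagram. The only place where one must be slightly careful is remembering that the quasi-similarity is not a bijection between $\Gamma$ and $\Gamma_1$, merely a map $T \mapsto S$ guaranteed to exist for each $T \in \Gamma$, but this is exactly the version of the definition recalled just before the statement, so the argument goes through unchanged.
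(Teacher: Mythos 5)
Your argument is correct and is essentially identical to the paper's own proof: both pull back a nonempty open set $V\subset Y$ to $\phi^{-1}(V)$ (nonempty by the dense range of $\phi$, open by continuity), pick $\alpha\in\mathbb{D}$ and $T\in\Gamma$ with $\alpha Tx\in\phi^{-1}(V)$, and use the intertwining $S\circ\phi=\phi\circ T$ to conclude $\alpha S\phi(x)\in V$. Nothing further is needed.
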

%%%%%%%%%%%%%%%%%%%%%%%%%%%%%%%%%%%%%%%%%%%%%%%%%%%%%%%%%%%%%%%%%%%%%%%%%%%%%%%%%%%%%%%%%%%%%%%%%%%%%%%%%%
\begin{proof}
Let $O$ be a nonempty open subset of $Y$, then $\phi^{-1}(O)$ is a nonempty open subset of $X$. If $x\in \mathbb{D}C(\Gamma)$, then there exist $\alpha\in\mathbb{D}$ and $T\in\Gamma$ such that $\alpha Tx\in \phi^{-1}(O)$, that is $\alpha\phi(Tx)\in O$. Let $S\in\Gamma_1$ such that $S\circ\phi=\phi\circ T$. Hence,
$$\alpha S(\phi x) =\alpha\phi(Tx)\in O.$$
Thus, $\mathbb{D}Orb(\Gamma_1,\phi x)$ meets every nonempty open subset of $Y$. From this, we deduce that $\mathbb{D}Orb(\Gamma_1,\phi x)$ is dense in $Y$. That is, $\Gamma_1$ is diskcyclic and $\phi x\in \mathbb{D}C(\Gamma_1)$.
\end{proof}
%%%%%%%%%%%%%%%%%%%%%%%%%%%%%%%%%%%%%%%%%%%%%%%%%%%%%%%%%%%%%%%%%%%%%%%%%%%%%%%%%%%%%%%%%%%%%%%%%%%%%%%%%%
\begin{corollary}
Assume that $\Gamma$ and $\Gamma_1$ are similar, then $\Gamma$ is diskcyclic in $X$ if and only if $\Gamma_1$ is diskcyclic in $Y$. Moreover, $$\phi(\mathbb{D}C(\Gamma)= \mathbb{D}C(\Gamma_1).$$
\end{corollary}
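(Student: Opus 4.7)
The plan is to reduce the corollary to Proposition~\ref{14} applied twice, exploiting the fact that when $\phi$ is a homeomorphism the similarity relation is symmetric. Concretely, rewriting the intertwining identity $S\circ\phi=\phi\circ T$ as $T\circ\phi^{-1}=\phi^{-1}\circ S$ shows that whenever $\Gamma$ and $\Gamma_1$ are similar via $\phi$, the set $\Gamma_1$ is also similar to $\Gamma$ via the homeomorphism $\phi^{-1}:Y\to X$, which automatically has dense range since it is surjective.

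First I would invoke Proposition~\ref{14} with the homeomorphism $\phi:X\to Y$: this yields that diskcyclicity of $\Gamma$ in $X$ implies diskcyclicity of $\Gamma_1$ in $Y$, together with the inclusion $\phi(\mathbb{D}C(\Gamma))\subset\mathbb{D}C(\Gamma_1)$. Next I would apply Proposition~\ref{14} again, this time to the similarity of $\Gamma_1$ with $\Gamma$ through $\phi^{-1}$: this produces the reverse implication (diskcyclicity of $\Gamma_1$ implies diskcyclicity of $\Gamma$) together with the inclusion $\phi^{-1}(\mathbb{D}C(\Gamma_1))\subset\mathbb{D}C(\Gamma)$. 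Applying $\phi$ to both sides of the latter and using bijectivity of $\phi$ yields $\mathbb{D}C(\Gamma_1)\subset\phi(\mathbb{D}C(\Gamma))$.

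Combining the two set inclusions gives the equality $\phi(\mathbb{D}C(\Gamma))=\mathbb{D}C(\Gamma_1)$, which simultaneously establishes both parts of the corollary: the biconditional characterization of diskcyclicity and the equality of diskcyclic-vector sets under $\phi$. The only delicate point in the argument is verifying the symmetry of similarity, namely that the correspondence $T\mapsto \phi T\phi^{-1}$ exhibits $\Gamma_1$ as similar to $\Gamma$ in the precise sense required by the definition recalled from \cite{AOC}; once this bookkeeping is in place, the proof is purely formal and proceeds by two applications of Proposition~\ref{14}. I do not anticipate any genuine obstacle beyond this symmetry check.
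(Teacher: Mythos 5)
Your proof is correct and is exactly the argument the paper intends: the corollary is stated without proof as an immediate consequence of Proposition~\ref{14}, applied once with $\phi$ and once with $\phi^{-1}$, and your combination of the two inclusions $\phi(\mathbb{D}C(\Gamma))\subset\mathbb{D}C(\Gamma_1)$ and $\phi^{-1}(\mathbb{D}C(\Gamma_1))\subset\mathbb{D}C(\Gamma)$ is the right way to get the stated equality. The caveat you flag is genuine: under the paper's literal definition of similarity (for every $T\in\Gamma$ there exists $S\in\Gamma_1$ with $S\circ\phi=\phi\circ T$), the relation is not automatically symmetric unless every $S\in\Gamma_1$ arises as $\phi\circ T\circ\phi^{-1}$ for some $T\in\Gamma$ --- without that, the ``only if'' direction can fail (take $\Gamma=\{I\}$ and $\Gamma_1$ a diskcyclic family containing $I$, with $\phi=I$); the paper evidently intends the symmetric reading, as it uses the same ``repeat the proof with $\phi^{-1}$'' device in its later similarity propositions, so your argument is complete once that convention is made explicit.
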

%%%%%%%%%%%%%%%%%%%%%%%%%%%%%%%%%%%%%%%%%%%%%%%%%%%%%%%%%%%%%%%%%%%%%%%%%%%%%%%%%%%%%%%%%%%%%%%%%%%%%%%%%%
\begin{proposition}\label{prop2}
Let $\Gamma\subset\mathcal{B}(X)$ and $(c_T)_{T\in \Gamma}\subset\mathbb{R}_{+}^{*}$. If $\{c_T T\mbox{ : }T\in \Gamma\}$ is diskcyclic, then for all $(k_T)_{T\in \Gamma}\subset\mathbb{R}_{+}^{*}$ such that $c_T\leq k_T$ for all $T\in\Gamma,$ $\{k_T T\mbox{ : }T\in \Gamma\}$ is diskcyclic.
\end{proposition}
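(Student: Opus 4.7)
The plan is to show that the same vector $x$ that is diskcyclic for $\{c_T T : T \in \Gamma\}$ remains diskcyclic for the larger scalings $\{k_T T : T \in \Gamma\}$, by proving the containment
\[
\mathbb{D}\mathrm{Orb}\bigl(\{c_T T : T \in \Gamma\},\, x\bigr)\ \subseteq\ \mathbb{D}\mathrm{Orb}\bigl(\{k_T T : T \in \Gamma\},\, x\bigr).
\]
Once this inclusion is established, the density of the left-hand side forces the density of the right-hand side, so $\{k_T T : T \in \Gamma\}$ is diskcyclic with $x$ as a diskcyclic vector.

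To verify the inclusion, I would pick an arbitrary element $\alpha c_T T x$ of the left orbit, where $\alpha\in\mathbb{D}$ and $T\in\Gamma$, and rewrite it as
\[
\alpha c_T T x \;=\; \left(\alpha\,\tfrac{c_T}{k_T}\right) k_T T x.
\]
Setting $\beta := \alpha\, c_T/k_T$, the hypothesis $0 < c_T \leq k_T$ gives $c_T/k_T \in (0,1]$, so $|\beta| = |\alpha|\, c_T/k_T \leq |\alpha|$, and hence $\beta\in\mathbb{D}$. This exhibits $\alpha c_T T x$ as an element of $\mathbb{D}\mathrm{Orb}(\{k_T T : T\in\Gamma\}, x)$, completing the containment.

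There is essentially no serious obstacle here; the only point worth noting is that the argument relies on $c_T/k_T$ being a real number in $(0,1]$, which is guaranteed since both sequences are drawn from $\mathbb{R}_+^*$ and satisfy $c_T\leq k_T$. This also explains the role of the positivity hypothesis: without it, the factor $c_T/k_T$ could leave the closed unit disk and the inclusion would break.
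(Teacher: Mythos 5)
Your proof is correct and follows exactly the same route as the paper: establish the orbit inclusion $\mathbb{D}\mathrm{Orb}(\{c_T T\},x)\subseteq\mathbb{D}\mathrm{Orb}(\{k_T T\},x)$ via the rescaling $\beta=\alpha\, c_T/k_T\in\mathbb{D}$, then conclude from density. No differences worth noting.
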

\begin{proof}
Let $x$ be a diskcyclic vector for $\{c_T T\mbox{ : }T\in \Gamma\}$. Since $c_T\leq k_T$ for all $T\in\Gamma$, it follows that
$$ \mathbb{D} Orb(\{c_T T\mbox{ : }T\in \Gamma\},x)\subset \mathbb{D} Orb(\{k_T T\mbox{ : }T\in \Gamma\},x).$$
Indeed, let $\alpha c_T Tx\in\mathbb{D} Orb(\{c_T T\mbox{ : }T\in \Gamma\},x)$. Put $\beta=\alpha\frac{c_T}{k_T}$, then
$ \vert \beta \vert\leq1,$
that is $\beta\in\mathbb{D}$. Thus, 
$$\alpha c_T Tx=k_T \beta Tx\in\mathbb{D} Orb(\{k_T T\mbox{ : }T\in \Gamma\},x).$$
Since $\mathbb{D} Orb(\{c_T T\mbox{ : }T\in \Gamma\},x)$ is dense in $X$, it follows that $\mathbb{D} Orb(\{k_T T\mbox{ : }T\in \Gamma\},x)$ is dense in $X$. Hence $\{k_T T\mbox{ : }T\in \Gamma\}$ is diskcyclic in $X$ and $x$ is a diskcyclic vector of $\{k_T T\mbox{ : }T\in \Gamma\}$.
\end{proof}
%%%%%%%%%%%%%%%%%%%%%%%%%%%%%%%%%%%%%%%%%%%%%%%%%%%%%%%%%%%%%%%%%%%%%%%%%%%%%%%%%%%%%%%%%%%%%%%%%%%%%%%%%%

Let $\{X_i\}_{i=1}^{n}$ be a family of complex topological vector spaces and let $\Gamma_i$ be a subset of $\mathcal{B}(X_i)$, for all $1\leq i\leq n$. Define 
$$\oplus_{i=1}^n X_i=X_1\times X_2\times\dots \times X_n=\{\oplus_{i=1}^n x_i=(x_1,x_2,\dots,x_n) \mbox{ : }x_i\in X_i\mbox{, }1\leq i\leq n\},$$
and
$$\oplus_{i=1}^n\Gamma_i=\{\oplus_{i=1}^n T_i=T_1\times T_2\times\dots \times T_n\mbox{ : }T_i\in\Gamma_i\mbox{, }1\leq i\leq n\},$$
where $T_1\times T_2\times\dots \times T_n$ is the operator defined in $\oplus_{i=1}^n X_i$ by
$$\begin{array}{ccccc}
\oplus_{i=1}^nT_i & : & \oplus_{i=1}^nX_i & \longrightarrow & \oplus_{i=1}^nX_i \\
 & & \oplus_{i=1}^nx_i & \longmapsto & \oplus_{i=1}^nT_i x_i. \\
\end{array}$$
\begin{proposition}\label{p4}
Let $\{X_i\}_{i=1}^{n}$ be a family of complex topological vector spaces and $\Gamma_i$ a subset of
$\mathcal{B}(X_i),$ for all $1\leq i\leq n$. If $\oplus_{i=1}^n\Gamma_i$ is a diskcyclic set in $\oplus_{i=1}^n X_i$, then $\Gamma_i$ is a diskcyclic set in $X_i$, for all $1\leq i\leq n$. Moreover, if $(x_1,x_2,\dots,x_n)\in \mathbb{D}C(\oplus_{i=1}^n\Gamma_i)$, then $x_i\in \mathbb{D}C(\Gamma_i)$, for all $1\leq i\leq n$.
\end{proposition}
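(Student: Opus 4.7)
The plan is to reduce the statement to the quasi-similarity principle already established in Proposition \ref{14}. Fix $1\leq i\leq n$ and consider the $i$-th coordinate projection
$$\pi_i : \oplus_{j=1}^n X_j \longrightarrow X_i, \qquad \pi_i(x_1,x_2,\dots,x_n)=x_i.$$
This map is a continuous linear operator and is surjective, hence has dense range. The key observation is the commutation identity
$$\pi_i \circ \bigl(\oplus_{j=1}^n T_j\bigr)(x_1,\dots,x_n) = T_i x_i = T_i\circ \pi_i(x_1,\dots,x_n),$$
so for every $\oplus_{j=1}^n T_j \in \oplus_{j=1}^n \Gamma_j$ there is an element $T_i\in\Gamma_i$ satisfying $T_i\circ\pi_i=\pi_i\circ(\oplus_{j=1}^n T_j)$.

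This is exactly the quasi-similarity condition from the definition recalled before Proposition \ref{14}, with $\phi=\pi_i$, $\Gamma=\oplus_{j=1}^n\Gamma_j$, and $\Gamma_1=\Gamma_i$. Applying Proposition \ref{14} immediately yields that $\Gamma_i$ is diskcyclic in $X_i$ whenever $\oplus_{j=1}^n\Gamma_j$ is diskcyclic in $\oplus_{j=1}^n X_j$, and moreover
$$\pi_i\bigl(\mathbb{D}C(\oplus_{j=1}^n\Gamma_j)\bigr) \subset \mathbb{D}C(\Gamma_i).$$
The second inclusion gives precisely the ``moreover'' part: if $(x_1,\dots,x_n)\in\mathbb{D}C(\oplus_{j=1}^n\Gamma_j)$, then $x_i=\pi_i(x_1,\dots,x_n)\in\mathbb{D}C(\Gamma_i)$ for each $i$.

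There is essentially no obstacle here; the only thing to verify carefully is that the projections genuinely produce an instance of quasi-similarity in the sense used earlier (continuous, dense range, and a compatible intertwiner belonging to $\Gamma_i$ for every member of $\oplus_{j=1}^n\Gamma_j$). If one preferred to avoid invoking Proposition \ref{14}, an equally short direct argument would take any diskcyclic vector $(x_1,\dots,x_n)$ of $\oplus_{j=1}^n\Gamma_j$, any nonempty open $U\subset X_i$, and the saturated open slab $X_1\times\cdots\times X_{i-1}\times U\times X_{i+1}\times\cdots\times X_n$ in $\oplus_{j=1}^n X_j$; density of the disk orbit in the product would then deliver $\alpha\in\mathbb{D}$ and $\oplus_{j=1}^n T_j$ with $\alpha T_i x_i\in U$, proving that $x_i$ is a diskcyclic vector for $\Gamma_i$.
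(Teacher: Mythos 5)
Your proof is correct, but it takes a genuinely different route from the paper's. You reduce the statement to Proposition \ref{14} by observing that the coordinate projection $\pi_i$ is a continuous surjection (hence has dense range) intertwining $\oplus_{j=1}^n\Gamma_j$ with $\Gamma_i$, i.e.\ $T_i\circ\pi_i=\pi_i\circ(\oplus_{j=1}^n T_j)$, which is exactly the quasi-similarity condition as defined before that proposition; the inclusion $\pi_i(\mathbb{D}C(\oplus_{j=1}^n\Gamma_j))\subset\mathbb{D}C(\Gamma_i)$ then delivers both the diskcyclicity of $\Gamma_i$ and the ``moreover'' clause in one stroke. The paper instead argues directly: it takes arbitrary nonempty open sets $O_i\subset X_i$, forms the product open set $O_1\times\cdots\times O_n$, and uses density of the disk orbit of $(x_1,\dots,x_n)$ to produce a single $\alpha\in\mathbb{D}$ and a single tuple $(T_1,\dots,T_n)$ with $\alpha T_ix_i\in O_i$ for every $i$ simultaneously. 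Your reduction buys economy and makes the structural reason transparent (projections are quasi-similarity intertwiners), while the paper's direct computation yields the marginally stronger fact that one scalar and one element of the direct sum work for all coordinates at once --- though that extra strength is not needed for the statement. Your sketched fallback argument with the slab $X_1\times\cdots\times U\times\cdots\times X_n$ is essentially the paper's proof restricted to one coordinate at a time, and is also fine.
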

%%%%%%%%%%%%%%%%%%%%%%%%%%%%%%%%%%%%%%%%%%%%%%%%%%%%%%%%%%%%%%%%%%%%%%%%%%%%%%%%%%%%%%%%%%%%%%%%%%%%%%%%%%
\begin{proof}
Assume that $\oplus_{i=1}^n \Gamma$ is diskcyclic in $\oplus_{i=1}^n X_i$ and let $(x_1,x_2,\dots,x_n)$ be a diskcyclic vector for $\oplus_{i=1}^n\Gamma_i$. For all $1\leq i\leq n$, let $O_i$ be a nonempty open subset of $X_i$, then $ O_1\times O_2\times\dots\times O_n$ is a nonempty open subset of $\oplus_{i=1}^n X_i$. Since $\mathbb{D}Orb(\oplus_{i=1}^n\Gamma_i,\oplus_{i=1}^n x_i)$ is dense in $\oplus_{i=1}^n X_i$, there exist $\alpha\in\mathbb{D}$ and $T_i\in\Gamma_i$; $1\leq i\leq n$ such that
$$(\alpha T_1\times\alpha T_2\times\dots \times\alpha T_n)(x_1,x_2,\dots,x_n)=\alpha(T_1 x_1,T_2 x_2,\dots,T_n x_n) \in  O_1\times O_2\times\dots\times O_n,$$
that is $\alpha T_i x_i\in O_i$, for all $1\leq i\leq n$. Hence, $\Gamma_i$ is a diskcyclic set in $X_i$ and $x_i\in \mathbb{D}C(\Gamma_i)$, for all $1\leq i\leq n$. 
\end{proof}
%%%%%%%%%%%%%%%%%%%%%%%%%%%%%%%%%%%%%%%%%%%%%%%%%%%%%%%%%%%%%%%%%%%%%%%%%%%%%%%%%%%%%%%%%%%%%%%%%%%%%%%%%%

The following proposition shows that the set of all diskcyclic operators of a set of operators is a $G_\delta$ type. Note that by $\mathbb{U}$ we mean the set  $\{y\in X \mbox{ : }\Vert y \Vert\geq 1\}.$
%%%%%%%%%%%%%%%%%%%%%%%%%%%%%%%%%%%%%%%%%%%%%%%%%%%%%%%%%%%%%%%%%%%%%%%%%%%%%%%%%%%%%%%%%%%%%%%%%%%%%%%%%%
\begin{proposition}\label{p2}
Let $X$ be a second countable complex topological vector space and $\Gamma\subset \mathcal{B}(X).$
If $\Gamma$is diskcyclic, then
$$\mathbb{D}C(\Gamma)=\bigcap_{n\geq1}\left( \bigcup_{T\in \Gamma}\bigcup_{\beta\in\mathbb{U}} T^{-1}(\beta U_{n})\right),$$
where $(U_n)_{n\geq1}$ is a countable basis of the topology of $X$. As a consequence, $\mathbb{D}C(\Gamma)$ is a $G_\delta$ type set.
\end{proposition}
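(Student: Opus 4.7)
The plan is to prove the set equality by directly translating the orbit condition ``$\alpha Tx\in U_n$'' into the preimage condition ``$x\in T^{-1}(\beta U_n)$'' via the substitution $\beta=\alpha^{-1}$, and then read off the $G_\delta$ property from the resulting formula.

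For the inclusion $\mathbb{D}C(\Gamma)\subseteq\bigcap_n\bigcup_{T,\beta}T^{-1}(\beta U_n)$, I would fix $x\in\mathbb{D}C(\Gamma)$ and an index $n\geq 1$. Density of $\mathbb{D}Orb(\Gamma,x)$ produces $\alpha\in\mathbb{D}$ and $T\in\Gamma$ with $\alpha Tx\in U_n$. The one delicate point, and the main obstacle I expect, is ensuring $\alpha\neq 0$ so that $\beta:=\alpha^{-1}$ makes sense; I would handle this by applying density instead to the open set $U_n\setminus\{0\}$, which is nonempty in any nontrivial Hausdorff TVS (the origin is not isolated, since otherwise continuity of scalar multiplication would force $X=\{0\}$). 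This forces $\alpha\neq 0$, and then $\beta\in\mathbb{U}$ together with $Tx\in\beta U_n$ gives $x\in T^{-1}(\beta U_n)$. Conversely, if $x$ lies in the right-hand set and $V$ is any nonempty open subset of $X$, the basis property yields $U_n\subseteq V$ for some $n$; the hypothesis supplies $T$ and $\beta\in\mathbb{U}$ with $Tx\in\beta U_n$, and $\alpha:=\beta^{-1}\in\mathbb{D}$ satisfies $\alpha Tx\in U_n\subseteq V$, so $\mathbb{D}Orb(\Gamma,x)$ meets every nonempty open set, i.e.\ $x\in\mathbb{D}C(\Gamma)$.

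For the $G_\delta$ conclusion, I would observe that every $\beta\in\mathbb{U}$ is nonzero, so $y\mapsto\beta y$ is a homeomorphism of $X$ and $\beta U_n$ is open; continuity of each $T\in\Gamma$ then makes $T^{-1}(\beta U_n)$ open. Arbitrary unions of open sets are open, so for each $n$ the inner double union over $T$ and $\beta$ is open, and the outer intersection is indexed by the countable set $\{n\geq 1\}$, exhibiting $\mathbb{D}C(\Gamma)$ as a countable intersection of open sets. The only non-routine step, as noted, is the bookkeeping around $\alpha=0$; everything else is a direct translation between the disk orbit and its preimages.
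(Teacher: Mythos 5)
Your proof is correct and follows essentially the same route as the paper's: translating $\alpha Tx\in U_n$ into $x\in T^{-1}(\beta U_n)$ via $\beta=\alpha^{-1}$ and reading off the $G_\delta$ property from the openness of the inner unions. You are in fact more careful than the paper on the one delicate point, the exclusion of $\alpha=0$ (the paper simply writes $\lambda\in\mathbb{D}\setminus\{0\}$ without justification), and your appeal to density in the nonempty open set $U_n\setminus\{0\}$ cleanly fills that small gap.
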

%%%%%%%%%%%%%%%%%%%%%%%%%%%%%%%%%%%%%%%%%%%%%%%%%%%%%%%%%%%%%%%%%%%%%%%%%%%%%%%%%%%%%%%%%%%%%%%%%%%%%%%%%%
\begin{proof}
Let $x\in X$. Then, $x\in \mathbb{D}C(\Gamma)$ if and only if $\overline{\mathbb{D}Orb(\Gamma,x)}=X$.
 Equivalently, for all $n\geq 1,$ $U_n \cap \mathbb{D}Orb(\Gamma,x)\neq\emptyset,$ that is for all $n\geq 1$,
there exist $\lambda\in\mathbb{D}\setminus\{0\}$ and $T\in\Gamma$ such that $\lambda Tx\in U_n$. This is
 equivalent to the fact that for all $n\geq1,$ there exist $\beta\in\mathbb{U}$ and $T\in\Gamma$ such that
 $x\in T^{-1}(\beta U_n)$. Hence, $\displaystyle x\in \bigcap_{n\geq1}\left( \bigcup_{T\in \Gamma
}\bigcup_{\beta\in \mathbb{U}} T^{-1}(\beta U_{n})\right).$ 
 Since for all $n\geq 1,$ the set $\displaystyle \bigcup_{T\in \Gamma}\bigcup_{\beta\in \mathbb{U}} T^{-1}(\beta U_{n})$ is open, it follows that $\mathbb{D}C(\Gamma)$ is a $G_\delta$ type set.
\end{proof}
%%%%%%%%%%%%%%%%%%%%%%%%%%%%%%%%%%%%%%%%%%%%%%%%%%%%%%%%%%%%%%%%%%%%%%%%%%%%%%%%%%%%%%%%%%%%%%%%%%%%%%%%%%
\section{Density and disk transitivity of sets of operators}
 In the following definition, we introduce the notion of disk transitivity of sets of operators which generalizes the notion of disk transitivity of a single operator.
\begin{definition}
A set $\Gamma\subset \mathcal{B}(X)$ is said to be disk transitive set if for any pair $(U,V)$ of nonempty open subsets of $X$, there exist $\alpha\in\mathbb{D}\setminus\{0\}$ and $T\in\Gamma$ such that 
$$T(\alpha U) \cap V\neq\emptyset.$$
\end{definition}
%%%%%%%%%%%%%%%%%%%%%%%%%%%%%%%%%%%%%%%%%%%%%%%%%%%%%%%%%%%%%%%%%%%%%%%%%%%%%%%%%%%%%%%%%%%%%%%%%%%%%%%%%
\begin{remark}
An operator $T\in\mathcal{B}(X)$ is disk transitive if and only if
$$\Gamma=\{T^n\mbox{ : }n\geq0\}$$
is a disk transitive.
\end{remark}
%%%%%%%%%%%%%%%%%%%%%%%%%%%%%%%%%%%%%%%%%%%%%%%%%%%%%%%%%%%%%%%%%%%%%%%%%%%%%%%%%%%%%%%%%%%%%%%%%%%%%%%%%%

The following proposition proves that the disk transitivity of sets of operators is preserved under quasi-similarity.

%%%%%%%%%%%%%%%%%%%%%%%%%%%%%%%%%%%%%%%%%%%%%%%%%%%%%%%%%%%%%%%%%%%%%%%%%%%%%%%%%%%%%%%%%%%%%%%%%%%%%%%%%%
\begin{proposition}\label{prop1}
Assume that $\Gamma\subset\mathcal{B}(X)$ and $\Gamma_1\subset \mathcal{B}(Y)$ are quasi-similar. If $\Gamma$ is disk transitive in $X$, then $\Gamma_1$ is disk transitive in $Y$.
\end{proposition}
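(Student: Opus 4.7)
The plan is to mirror the argument used in Proposition \ref{14} for diskcyclicity, adapting it to the open-set formulation required by transitivity. The rough idea is: test disk transitivity of $\Gamma_1$ on an arbitrary pair of open sets in $Y$, pull back to $X$ via $\phi$, apply disk transitivity of $\Gamma$ there, and push the resulting witness back to $Y$ using the intertwining relation $S\circ\phi=\phi\circ T$.

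Concretely, I would begin by fixing an arbitrary pair $(U', V')$ of nonempty open subsets of $Y$ and forming the preimages $U := \phi^{-1}(U')$ and $V := \phi^{-1}(V')$ in $X$. Continuity of $\phi$ makes these open, and density of $\phi(X)$ in $Y$ guarantees they are nonempty (any nonempty open subset of $Y$ meets $\phi(X)$, so its preimage is nonempty). This is the only place where the density hypothesis enters, and it enters in exactly the same way as in the proof of Proposition \ref{14}.

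Next, I would apply disk transitivity of $\Gamma$ to the pair $(U, V)$: there exist $\alpha\in\mathbb{D}\setminus\{0\}$, $T\in\Gamma$, and a point $u\in U$ with $T(\alpha u)=\alpha Tu\in V$. By quasi-similarity, pick $S\in\Gamma_1$ satisfying $S\circ\phi=\phi\circ T$. Then
$$S(\alpha\phi(u))=\alpha S(\phi(u))=\alpha\phi(Tu)=\phi(\alpha Tu)\in\phi(V)\subset V',$$
while $\phi(u)\in U'$, so $\alpha\phi(u)\in\alpha U'$. This produces a point of $S(\alpha U')\cap V'$, establishing disk transitivity of $\Gamma_1$.

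I do not expect a genuine obstacle here: the argument is purely bookkeeping, essentially identical in spirit to Proposition \ref{14} with the replacement of a single diskcyclic vector $x$ by a point $u$ in the preimage of the target open set. The only small subtlety is keeping $\alpha\neq 0$ (which is already built into the definition of disk transitivity adopted in this section) so that $\alpha U'$ is a genuine nonempty open set, and verifying that the linearity of $T$ and $S$ together with the intertwining relation transfer the scalar $\alpha$ through $\phi$ correctly.
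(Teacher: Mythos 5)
Your proposal is correct and follows essentially the same route as the paper's proof: pull the two open sets in $Y$ back through $\phi$ (using continuity for openness and dense range for nonemptiness), apply disk transitivity of $\Gamma$ to the preimages, and transport the witness forward via the intertwining relation $S\circ\phi=\phi\circ T$. The only difference is cosmetic bookkeeping; your write-up is in fact slightly cleaner, since the paper's version misstates the open sets as living in $X$ before taking their preimages.
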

%%%%%%%%%%%%%%%%%%%%%%%%%%%%%%%%%%%%%%%%%%%%%%%%%%%%%%%%%%%%%%%%%%%%%%%%%%%%%%%%%%%%%%%%%%%%%%%%%%%%%%%%%%
\begin{proof}
Since $\Gamma$ and $\Gamma_1$ are quasi-similar, there exists a continuous map $\phi$ : $X\longrightarrow Y$ with dense range such that for all $T\in\Gamma,$ there exists $S\in\Gamma_1$ satisfying $S\circ\phi=\phi\circ T$. 
Let $U$ and $V$ be nonempty open subsets of $X$. Since $\phi$ is continuous and of dense range, $\phi^{-1}(U)$ and $\phi^{-1}(V)$ are nonempty and open. Since $\Gamma$ is disk transitive in $X$, there exist  $y\in \phi^{-1}(U)$ and $\alpha\in\mathbb{D},$ $T\in\Gamma$ with $\alpha Ty\in\phi^{-1}(V)$, which implies that $\phi(y)\in U$ and $\alpha \phi(Ty)\in V$. Let $S\in\Gamma_1$ such that $S\circ\phi=\phi\circ T$. Then, $\phi(y)\in U$ and $\alpha S\phi(y)\in V$. Thus, $\alpha S(U)\cap V\neq\emptyset.$ Hence, $\Gamma_1$ is disk transitive in $Y.$
\end{proof}
%%%%%%%%%%%%%%%%%%%%%%%%%%%%%%%%%%%%%%%%%%%%%%%%%%%%%%%%%%%%%%%%%%%%%%%%%%%%%%%%%%%%%%%%%%%%%%%%%%%%%%%%%%
\begin{corollary}
Assume that $\Gamma\subset\mathcal{B}(X)$ and $\Gamma_1\subset\mathcal{B}(Y)$ are similar. Then, $\Gamma$ is disk transitive in $X$ if and only if $\Gamma_1$ is disk transitive in $Y$.
\end{corollary}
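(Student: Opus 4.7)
The plan is to deduce this corollary by applying Proposition \ref{prop1} in both directions, exploiting the fact that similarity (with $\phi$ a homeomorphism) is a symmetric relation, while quasi-similarity in general is not.

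For the forward direction, I would simply observe that if $\Gamma$ and $\Gamma_1$ are similar, then in particular they are quasi-similar (the homeomorphism $\phi\colon X\to Y$ is continuous with dense range—indeed, surjective). Thus Proposition \ref{prop1} applies verbatim: disk transitivity of $\Gamma$ in $X$ implies disk transitivity of $\Gamma_1$ in $Y$.

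For the converse, the idea is to swap the roles of $X$ and $Y$ by passing to $\phi^{-1}$. Since $\phi$ is a homeomorphism, $\phi^{-1}\colon Y\to X$ is continuous with range equal to $X$, hence dense. Moreover, the intertwining $S\circ\phi=\phi\circ T$ that comes with similarity can be rearranged into $T\circ\phi^{-1}=\phi^{-1}\circ S$. Under the natural reading of similarity as $\Gamma_1=\phi\,\Gamma\,\phi^{-1}$, this correspondence is bijective, so for every $S\in\Gamma_1$ there is $T\in\Gamma$ with $T\circ\phi^{-1}=\phi^{-1}\circ S$. Consequently $\Gamma_1$ and $\Gamma$ themselves are quasi-similar, this time via the intertwiner $\phi^{-1}\colon Y\to X$, and Proposition \ref{prop1} applied in this reversed setting yields that disk transitivity of $\Gamma_1$ in $Y$ implies disk transitivity of $\Gamma$ in $X$.

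The only step requiring any attention is verifying that similarity really gives a two-sided matching between the elements of $\Gamma$ and those of $\Gamma_1$ (so that the reverse intertwining $T\circ\phi^{-1}=\phi^{-1}\circ S$ is available for every $S\in\Gamma_1$). Once this symmetry is noted, the corollary is purely formal: the combination of the two implications proves the equivalence, and there is no new computation beyond what Proposition \ref{prop1} already supplies.
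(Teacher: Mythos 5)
Your proposal is correct and is exactly the argument the paper intends (the corollary is left without explicit proof, but the paper's analogous results are all handled by applying the quasi-similarity proposition twice, once with $\phi$ and once with $\phi^{-1}$). Your remark about needing the two-sided matching between $\Gamma$ and $\Gamma_1$ is a fair reading of the paper's slightly asymmetric definition of similarity, but under the intended interpretation $\Gamma_1=\phi\,\Gamma\,\phi^{-1}$ this is automatic and nothing further is needed.
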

%%%%%%%%%%%%%%%%%%%%%%%%%%%%%%%%%%%%%%%%%%%%%%%%%%%%%%%%%%%%%%%%%%%%%%%%%%%%%%%%%%%%%%%%%%%%%%%%%%%%%%%%%%
%%%%%%%%%%%%%%%%%%%%%%%%%%%%%%%%%%%%%%%%%%%%%%%%%%%%%%%%%%%%%%%%%%%%%%%%%%%%%%%%%%%%%%%%%%%%%%%%%%%%%%%%%%

In the following result, we give necessary and sufficient conditions for a set of operators to be disk transitive.
%%%%%%%%%%%%%%%%%%%%%%%%%%%%%%%%%%%%%%%%%%%%%%%%%%%%%%%%%%%%%%%%%%%%%%%%%%%%%%%%%%%%%%%%%%%%%%%%%%%%%%%%%%
\begin{theorem}\label{tt}
Let $X$ be a complex normed space and $\Gamma\subset\mathcal{B}(X)$. The following assertions are equivalent$:$ 
\begin{itemize}
\item[$(i)$] $\Gamma$ is disk transitive;
\item[$(ii)$] For each $x$, $y\in X,$ there exists sequences $\{k\}$ in $\mathbb{N}$, $\{x_k\}$ in $X$, $\{\alpha_k\}$ in $\mathbb{D}$ and $\{T_k\}$ in $\Gamma$ such that
$$x_k\longrightarrow x\hspace{0.3cm}\mbox{ and }\hspace{0.3cm}T_k(\alpha_k x_k)\longrightarrow y;$$
\item[$(iii)$] For each $x$, $y\in X$ and for $W$ a neighborhood of $0$, there exist $z\in X$, $\alpha\in\mathbb{D}$ and $T\in\Gamma$  such that 
$$x-z\in W \hspace{0.3cm}\mbox{ and }\hspace{0.3cm} T(\alpha z)-y\in W. $$ 
\end{itemize}
\end{theorem}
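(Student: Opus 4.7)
The three conditions are standard "transitivity versus approximation" reformulations, so I would prove the cycle $(i)\Rightarrow(ii)\Rightarrow(iii)\Rightarrow(i)$.

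For $(i)\Rightarrow(ii)$, I would fix $x,y\in X$ and, for each $k\geq 1$, apply disk transitivity to the open balls $U_k=B(x,1/k)$ and $V_k=B(y,1/k)$. This produces $\alpha_k\in\mathbb{D}\setminus\{0\}$ and $T_k\in\Gamma$ such that $T_k(\alpha_k U_k)\cap V_k\neq\emptyset$, so one can pick $x_k\in U_k$ with $T_k(\alpha_k x_k)\in V_k$. The estimates $\|x_k-x\|<1/k$ and $\|T_k(\alpha_k x_k)-y\|<1/k$ give the required convergences.

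For $(ii)\Rightarrow(iii)$, given $x,y\in X$ and a neighborhood $W$ of $0$, apply $(ii)$ to obtain sequences with $x_k\to x$ and $T_k(\alpha_k x_k)\to y$. Then $x_k-x\to 0$ and $T_k(\alpha_k x_k)-y\to 0$, so for all sufficiently large $k$ both differences lie in $W$; setting $z=x_k$, $\alpha=\alpha_k$, $T=T_k$ finishes this direction.

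For $(iii)\Rightarrow(i)$, let $U,V$ be nonempty open subsets of $X$. Choose $x\in U$ and, taking $X\neq\{0\}$ (the case $X=\{0\}$ being vacuous), choose $y\in V$ with $y\neq 0$ (any open set meets $X\setminus\{0\}$). Pick $\varepsilon>0$ small enough that $B(x,\varepsilon)\subset U$, $B(y,\varepsilon)\subset V$ and $\varepsilon<\|y\|/2$, and set $W=B(0,\varepsilon)$, which is balanced. By $(iii)$ there exist $z\in X$, $\alpha\in\mathbb{D}$ and $T\in\Gamma$ with $x-z\in W$ and $T(\alpha z)-y\in W$. Since $W=-W$, this gives $z\in x+W\subset U$ and $T(\alpha z)\in y+W\subset V$, so $T(\alpha z)\in T(\alpha U)\cap V$.

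The only delicate point I foresee, and which is the real content of this last implication, is the exclusion of $\alpha=0$: condition $(iii)$ allows $\alpha\in\mathbb{D}$ while the definition of disk transitivity demands $\alpha\in\mathbb{D}\setminus\{0\}$. This is exactly what the choice $\varepsilon<\|y\|/2$ handles, because it forces $\|T(\alpha z)\|>\|y\|-\varepsilon>\varepsilon>0$, so $T(\alpha z)\neq 0$ and therefore $\alpha\neq 0$. Beyond this small trick, the rest of the argument is straightforward bookkeeping with open balls.
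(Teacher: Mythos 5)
Your proof is correct and follows essentially the same route as the paper's: balls $B(x,1/k)$, $B(y,1/k)$ for $(i)\Rightarrow(ii)$, extraction of a large index for $(ii)\Rightarrow(iii)$, and shrinking neighborhoods of $0$ for $(iii)\Rightarrow(i)$. The one point where you go beyond the paper is the exclusion of $\alpha=0$ in $(iii)\Rightarrow(i)$ (the definition of disk transitivity requires $\alpha\in\mathbb{D}\setminus\{0\}$ while $(iii)$ only gives $\alpha\in\mathbb{D}$); the paper's proof passes over this silently, and your choice of $y\neq 0$ and $\varepsilon<\Vert y\Vert/2$, forcing $T(\alpha z)\neq 0$ and hence $\alpha\neq 0$, is a correct and worthwhile repair.
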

%%%%%%%%%%%%%%%%%%%%%%%%%%%%%%%%%%%%%%%%%%%%%%%%%%%%%%%%%%%%%%%%%%%%%%%%%%%%%%%%%%%%%%%%%%%%%%%%%%%%%%%%%%
\begin{proof}$(i)\Rightarrow(ii)$
Let $x$, $y\in X$. For all $k\geq1$, let $U_k=B(x,\frac{1}{k})$ and $V_k=B(y,\frac{1}{k})$. Then $U_k$ and $V_k$ are nonempty open subsets of $X$. Since $\Gamma$ is disk transitive, there exist $\alpha_k\in\mathbb{D}$ and $T_k\in\Gamma$ such that $T_k(\alpha_k U_k)\cap V_k\neq\emptyset$. For all $k\geq1$, let $x_k\in U_k$ such that $ T_k(\alpha x_k)\in V_k$, then 
$$\Vert x_k-x \Vert<\frac{1}{k}\hspace{0.3cm}\mbox{ and }\hspace{0.3cm}\Vert T_k(\alpha x_k)-y \Vert<\frac{1}{k}$$
which implies that 
$$x_k\longrightarrow x\hspace{0.3cm}\mbox{ and }\hspace{0.3cm} T_k(\alpha x_k)\longrightarrow y.$$
%%%%%%%%%%%%%%%%%%%%%%%%%%%%%%%%%%%%%%%%%%%%%%%%%%%%%%%%%%%%%%%%%%%%%%%%%%%%%%%%%%%%%%%%%%%%%%%%%%%%%%%%%%

$(ii)\Rightarrow(iii)$ Let $x$, $y\in X$. There exists sequences $\{k\}$ in $\mathbb{N}$, $\{x_k\}$ in $X$  $\{\alpha_k\}$ in $\mathbb{D}$ and $\{T_k\}$ in $\Gamma$ such that
$$x_k-x\longrightarrow 0\hspace{0.3cm}\mbox{ and }\hspace{0.3cm}T_k(\alpha_k x_k)-y\longrightarrow 0.$$
Let $W$ be a neighborhood of zero, then there exists $N\in\mathbb{N}$ such that 
$$x-x_k\in W\hspace{0.3cm}\mbox{ and }\hspace{0.3cm}T_k(\alpha_k x_k)-y\in W,$$ 
for all $k\geq N$.
%%%%%%%%%%%%%%%%%%%%%%%%%%%%%%%%%%%%%%%%%%%%%%%%%%%%%%%%%%%%%%%%%%%%%%%%%%%%%%%%%%%%%%%%%%%%%%%%%%%%%%%%%%

$(iii)\Rightarrow(i)$ Let $U$ and $V$ be two nonempty open subsets of $X$. Then there exist $x$, $y\in X$ such that $x\in U$ and $y\in V$. Since for all $k\geq1$,  $W_k=B(0,\frac{1}{k})$ is a neighborhood of $0$,  there exist $z_k\in X,$ $\alpha_k\in\mathbb{D}$ and $T_k\in\Gamma$ such that 
$$\Vert x-z_k\Vert<\frac{1}{k}\hspace{0.3cm}\mbox{ and }\hspace{0.3cm}\Vert T_k(\alpha_k z_k)-y\Vert<\frac{1}{k}.$$
This implies that 
$$z_k\longrightarrow x\hspace{0.3cm}\mbox{ and }\hspace{0.3cm}T_k(\alpha_k z_k)\longrightarrow y.$$
 Since $U$ and $V$ are nonempty open subsets of $X$, $x\in U$ and $y\in V$, there exists $N\in\mathbb{N}$ such that $z_k\in U$ and $T_k(\alpha_k z_k)\in V$, for all $k\geq N.$ 
\end{proof}
%%%%%%%%%%%%%%%%%%%%%%%%%%%%%%%%%%%%%%%%%%%%%%%%%%%%%%%%%%%%%%%%%%%%%%%%%%%%%%%%%%%%%%%%%%%%%%%%%%%%%%%%%%

Recall that an operator is diskcyclic if and only if it is disk transitive.
 Let $\Gamma$ be a subset of $\mathcal{B}(X)$. In whats follows, we prove that if $\Gamma$ is disk transitive set then $\Gamma$ is diskcyclic.
%%%%%%%%%%%%%%%%%%%%%%%%%%%%%%%%%%%%%%%%%%%%%%%%%%%%%%%%%%%%%%%%%%%%%%%%%%%%%%%%%%%%%%%%%%%%%%%%%%%%%%%%%%
\begin{theorem}\label{t1}
Let $X$ be a second countable Baire complex topological vector space and $\Gamma\subset \mathcal{B}(X)$. The following assertions are equivalent$:$
\begin{itemize}
\item[$(i)$] $\mathbb{D}C(\Gamma)$ is dense in $X$;
\item[$(ii)$] $\Gamma$ is disk transitive.
\end{itemize}
In this case, $\Gamma$ is diskcyclic and the set of all diskcyclic vectors of $\Gamma$ is a dense $G_\delta$ type set.
\end{theorem}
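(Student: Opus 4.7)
The plan is to run a Birkhoff-type transitivity argument based on the representation of $\mathbb{D}C(\Gamma)$ as a countable intersection of open sets supplied by Proposition~\ref{p2}. The derivation given there is pointwise, so the identity
\[
\mathbb{D}C(\Gamma)=\bigcap_{n\geq1}A_n,\qquad A_n:=\bigcup_{T\in\Gamma}\bigcup_{\beta\in\mathbb{U}}T^{-1}(\beta U_n),
\]
is in fact valid without assuming $\Gamma$ is already diskcyclic; moreover each $A_n$ is open, as noted there. The two implications will then fall out by exchanging nonempty open sets with basic neighborhoods $U_n$.

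For $(i)\Rightarrow(ii)$, I would fix nonempty open sets $U,V\subset X$, pick a diskcyclic vector $x\in U\cap\mathbb{D}C(\Gamma)$ by density, and use that $\mathbb{D}Orb(\Gamma,x)$ is dense in $X$ to find $\alpha\in\mathbb{D}$ and $T\in\Gamma$ with $\alpha Tx\in V$. Since $T(\alpha x)=\alpha Tx$ and $\alpha x\in\alpha U$, this gives $T(\alpha U)\cap V\neq\emptyset$ as soon as one can guarantee $\alpha\neq0$. To arrange this, if $0\in V$ I would first shrink $V$ to a nonempty open subset $V'\subset V$ with $0\notin V'$ (possible in any Hausdorff topological vector space provided $V\neq\{0\}$, which is a harmless restriction since the theorem is vacuous in the trivial case) and apply density inside $V'$.

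For $(ii)\Rightarrow(i)$, the only real content is that each $A_n$ is dense. Given a nonempty open $V$, disk transitivity yields $\alpha\in\mathbb{D}\setminus\{0\}$ and $T\in\Gamma$ with $T(\alpha V)\cap U_n\neq\emptyset$; unfolding produces $v\in V$ with $\alpha Tv\in U_n$, i.e.\ $Tv\in\alpha^{-1}U_n$, and since $|\alpha^{-1}|\geq1$ the scalar $\beta:=\alpha^{-1}$ lies in $\mathbb{U}$, so $v\in T^{-1}(\beta U_n)\cap V\subset A_n\cap V$. Thus every $A_n$ is open and dense; the Baire hypothesis, combined with second countability to make the intersection countable, forces $\mathbb{D}C(\Gamma)=\bigcap_n A_n$ to be a dense $G_\delta$ subset of $X$. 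In particular it is nonempty, which simultaneously gives $(i)$, the diskcyclicity of $\Gamma$, and the final structural claim.

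The main obstacle is the scalar bookkeeping. The definition of a diskcyclic vector permits $\alpha=0$, whereas disk transitivity forbids it; and the substitution $\beta=\alpha^{-1}\in\mathbb{U}$ mediating between the two descriptions has to be handled carefully at the boundary $|\alpha|=0$ and $|\beta|=\infty$. The $(i)\Rightarrow(ii)$ direction is where this matters, and the shrinking-away-from-$0$ trick described above is what cleanly resolves it; from that point onward the argument is a routine Baire application.
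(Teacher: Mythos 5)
Your proposal is correct and takes essentially the same route as the paper: both directions hinge on the representation $\mathbb{D}C(\Gamma)=\bigcap_n A_n$ from Proposition \ref{p2}, with each $A_n$ shown open and dense and the Baire property closing the argument. Your direct treatment of $(i)\Rightarrow(ii)$ via a diskcyclic vector chosen inside $U$, together with the explicit shrinking of $V$ away from $0$ to force $\alpha\neq 0$, is only a minor (and somewhat more careful) variant of the paper's unwinding of $A_n\cap U_m\neq\emptyset$.
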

%%%%%%%%%%%%%%%%%%%%%%%%%%%%%%%%%%%%%%%%%%%%%%%%%%%%%%%%%%%%%%%%%%%%%%%%%%%%%%%%%%%%%%%%%%%%%%%%%%%%%%%%%%
\begin{proof} 
Since $X$ is a second countable Baire complex topological vector space, we can consider $(U_m)_{m\geq1}$ a countable basis of the topology of $X$.\\  
$(i)\Rightarrow (ii) :$ Assume that $\mathbb{D}C(\Gamma)$ is dense in $X$ and let $U$ and $V$ be two nonempty open subsets of $X$.  By Proposition \ref{p2}, we have 
$$\mathbb{D}C(\Gamma)=\bigcap_{n\geq1}\left(\bigcup_{\beta\in\mathbb{U}}\bigcup_{T\in \Gamma} T^{-1}(\beta U_{n})\right).$$
Hence, for all $n\geq 1,$ $\displaystyle A_n:=\bigcup_{\beta\in\mathbb{U}}\bigcup_{T\in \Gamma} T^{-1}(\beta U_{n})$ is dense in $X$. Thus, for all $n,$ $m\geq 1,$ we have $A_n\cap U_m\neq\emptyset$ which implies that for all $n,$ $m\geq 1,$ there exist $\beta\in\mathbb{D}\setminus\{0\}$ and $T\in \Gamma$ such that $T(\beta U_m)\cap U_n\neq\emptyset$. Since $(U_m)_{m\geq1}$ is a countable basis of the topology of $X$, it follows that $\Gamma$ is a disk transitive set.\\
$(ii)\Rightarrow (i) : $ Assume that $\Gamma$ is disk transitive. Let $n$, $m\geq 1$, then there exist
$T\in \Gamma$ and $\beta\in \mathbb{D}\setminus\{0\}$ such that $T(\beta U_m)\cap U_n\neq \emptyset$,
which implies that  $T^{-1}(\frac{1}{\beta} U_n)\cap U_m\neq \emptyset$. Hence, for all $n\geq1,$ we have
$\displaystyle\bigcup_{\beta\in\mathbb{U}}\bigcup_{T\in \Gamma} T^{-1}(\beta U_{n})$ is dense in $X$.
Since $X$ is a Baire space, it follows that 
$$\mathbb{D}C(\Gamma)=\bigcap_{n\geq1}\left(\bigcup_{\beta\in\mathbb{U}}\bigcup_{T\in \Gamma} T^{-1}(\beta U_{n})\right)$$
is a dense subset of $X$.
\end{proof}
%%%%%%%%%%%%%%%%%%%%%%%%%%%%%%%%%%%%%%%%%%%%%%%%%%%%%%%%%%%%%%%%%%%%%%%%%%%%%%%%%%%%%%%%%%%%%%%%%%%%%%%%%%
The converse of Theorem \ref{t1} holds with some additional assumptions.
%%%%%%%%%%%%%%%%%%%%%%%%%%%%%%%%%%%%%%%%%%%%%%%%%%%%%%%%%%%%%%%%%%%%%%%%%%%%%%%%%%%%%%%%%%%%%%%%%%%%%%%%%%
\begin{theorem}\label{so}
Assume that for all $T$, $S\in\Gamma$ with $T\neq S$, there exists $A\in\Gamma$ such that $T=AS$. Then $\Gamma$ is diskcyclic implies that $\Gamma$ is disk transitive.
\end{theorem}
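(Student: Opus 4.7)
The plan is to fix a diskcyclic vector $x\in\mathbb{D}C(\Gamma)$ and, for arbitrary nonempty open $U,V\subset X$, construct a point $u_0\in U$ whose own disk orbit under $\Gamma$ is dense, so that the goal $T(\alpha U)\cap V\neq\emptyset$ can be reached just by asking $\mathbb{D}Orb(\Gamma,u_0)$ to meet $V$.

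First, using density of $\mathbb{D}Orb(\Gamma,x)$, I would choose $\alpha_1\in\mathbb{D}\setminus\{0\}$ and $S\in\Gamma$ with $u_0:=\alpha_1 Sx\in U$ (after shrinking $U$ so that $0\notin U$, which is permitted because $X$ is Hausdorff and ensures in particular that $Sx\neq 0$). The heart of the argument is to verify that $Sx$ is itself a diskcyclic vector for $\Gamma$. The hypothesis supplies, for every $R\in\Gamma\setminus\{S\}$, some $A_R\in\Gamma$ with $R=A_R S$, hence $Rx=A_R(Sx)$ belongs to $\{T(Sx):T\in\Gamma\}$. Multiplying by the disk therefore yields the inclusion
$$\mathbb{D}Orb(\Gamma,x)\subseteq \mathbb{D}Orb(\Gamma,Sx)\cup(\mathbb{D}\cdot Sx).$$
Taking closures and using that $\overline{\mathbb{D}\cdot Sx}$ lies in the complex line $\mathbb{C}Sx$, which has empty interior in $X$ whenever $\dim_{\mathbb{C}}X>1$ (the case $\dim_{\mathbb{C}}X=1$ being trivial, since disk transitivity and diskcyclicity then obviously coincide), one concludes $\overline{\mathbb{D}Orb(\Gamma,Sx)}=X$, so $Sx\in\mathbb{D}C(\Gamma)$. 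By the scaling remark following Proposition~\ref{p1}, $u_0=\alpha_1 Sx$ also lies in $\mathbb{D}C(\Gamma)$, and then $\mathbb{D}Orb(\Gamma,u_0)$ meets $V$: there exist $\beta\in\mathbb{D}\setminus\{0\}$ and $T\in\Gamma$ with $\beta T u_0\in V$. Since $u_0\in U$ gives $\beta u_0\in\beta U$, we obtain $T(\beta u_0)=\beta T u_0\in T(\beta U)\cap V$, which is disk transitivity.

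The step I expect to be the main obstacle is this intermediate claim that $Sx\in\mathbb{D}C(\Gamma)$. Everything else is a direct manipulation of the density of $\mathbb{D}Orb(\Gamma,x)$ together with the scaling invariance $\alpha x\in\mathbb{D}C(\Gamma)$ already noted; pushing diskcyclicity from $x$ to $Sx$ is what genuinely uses the left-factoring hypothesis $T=AS$, which is precisely what lets the orbits of $x$ and of $Sx$ differ only by the one-dimensional slice $\mathbb{D}\cdot Sx$. That slice being nowhere dense in $X$ is the single topological input that makes the argument work.
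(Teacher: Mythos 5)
Your proof is correct, but it takes a genuinely different route from the paper's. The paper argues directly on the pair $(U,V)$: it picks $\alpha Tx\in U$ and $\beta Sx\in V$ with $|\alpha|\le|\beta|$, factors $T=AS$, and observes that the single point $A(Sx)$ then witnesses $U\cap A(\tfrac{\alpha}{\beta}V)\neq\emptyset$; this is purely algebraic and needs no topological input beyond continuity, though it silently assumes $T\neq S$, $\alpha\neq 0$, and the normalization $|\alpha|\le|\beta|$. You instead isolate the stronger intermediate fact that $Sx\in\mathbb{D}C(\Gamma)$ for $S\in\Gamma$ --- so the orbit of a diskcyclic vector consists of diskcyclic vectors, a relative of Proposition \ref{p1} that here needs neither dense range nor commutation, only the left-factoring hypothesis --- after which transitivity for $(U,V)$ is immediate from having a diskcyclic vector inside $U$. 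The inclusion $\mathbb{D}Orb(\Gamma,x)\subseteq\mathbb{D}Orb(\Gamma,Sx)\cup(\mathbb{D}\cdot Sx)$ is exactly right, and the deduction $\overline{\mathbb{D}Orb(\Gamma,Sx)}=X$ is sound because the complement of the closed nowhere dense set $\overline{\mathbb{D}\cdot Sx}$ is dense open and contained in the closed set $\overline{\mathbb{D}Orb(\Gamma,Sx)}$ (no Baire argument needed, but worth a line). The price of your route is the extra topological input that a complex line is closed and nowhere dense (valid in a Hausdorff space of dimension $>1$) together with a separate, trivial one-dimensional case; the gain is a cleaner structural statement and the avoidance of the $|\alpha|\le|\beta|$ and $T\neq S$ issues the paper glosses over. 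One small detail to record explicitly: shrink $V$ as well as $U$ so that $0\notin V$, which is what guarantees the final scalar $\beta$ is nonzero as the definition of disk transitivity requires.
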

%%%%%%%%%%%%%%%%%%%%%%%%%%%%%%%%%%%%%%%%%%%%%%%%%%%%%%%%%%%%%%%%%%%%%%%%%%%%%%%%%%%%%%%%%%%%%%%%%%%%%%%%%%
\begin{proof}
Since $\Gamma$ is diskcyclic, there exists $x\in X$ such that $\mathbb{D}Orb(\Gamma,x)$ is a dense subset of $X$.
Let $U$ and $V$ be two nonempty open subsets of $X$, then there exist $\alpha$, $\beta\in\mathbb{D}$ with $\vert\alpha\vert\leq\vert\beta\vert$, and $T$, $S\in\Gamma$ such that
\begin{equation}\label{e3}
\alpha Tx\in U \hspace{0.6cm}\mbox{ and }\hspace{0.6cm}\beta Sx\in V. 
\end{equation}
There exists $A\in\Gamma$ such that $T=AS$. By (\ref{e3}), we have
$$\alpha A(Sx)\in U \hspace{0.2cm}\mbox{ and }\hspace{0.2cm}\beta A(Sx)\in  A(V)$$
which implies that $U\cap A(\frac{\beta}{\alpha} V)\neq \emptyset$. 
Hence, $\Gamma$ is disk transitive.
\end{proof}
%%%%%%%%%%%%%%%%%%%%%%%%%%%%%%%%%%%%%%%%%%%%%%%%%%%%%%%%%%%%%%%%%%%%%%%%%%%%%%%%%%%%%%%%%%%%%%%%%%%%%%%%%%
In the following definition we introduce the notion of strictly disk transitivity of set of operators. The case of hypercyclicity (resp, supercyclicity) was introduced in \cite{AKH} (resp \cite{AOS}).
%%%%%%%%%%%%%%%%%%%%%%%%%%%%%%%%%%%%%%%%%%%%%%%%%%%%%%%%%%%%%%%%%%%%%%%%%%%%%%%%%%%%%%%%%%%%%%%%%%%%%%%%%%
\begin{definition}
A set $\Gamma\subset \mathcal{B}(X)$  is called strictly disk transitive if for each pair of nonzero elements
$x,$ $y$ in $X$, there exist some $\alpha\in\mathbb{D}$ and some $T\in\Gamma$ such that $\alpha Tx=y.$
\end{definition}
%%%%%%%%%%%%%%%%%%%%%%%%%%%%%%%%%%%%%%%%%%%%%%%%%%%%%%%%%%%%%%%%%%%%%%%%%%%%%%%%%%%%%%%%%%%%%%%%%%%%%%%%%%
\begin{remark}
 An operator $T\in \mathcal{B}(X)$ is strictly disk transitive if and only if
$\Gamma=\{T^n\mbox{ : }n\geq0\}$
is strictly disk transitive.
\end{remark}
%%%%%%%%%%%%%%%%%%%%%%%%%%%%%%%%%%%%%%%%%%%%%%%%%%%%%%%%%%%%%%%%%%%%%%%%%%%%%%%%%%%%%%%%%%%%%%%%%%%%%%%%%%
%%%%%%%%%%%%%%%%%%%%%%%%%%%%%%%%%%%%%%%%%%%%%%%%%%%%%%%%%%%%%%%%%%%%%%%%%%%%%%%%%%%%%%%%%%%%%%%%%%%%%%%%%%
\begin{proposition}
If $\Gamma$ is strictly disk transitive set, then it is disk transitive. As a consequence, if $\Gamma$ is strictly disk transitive set, then it is diskcyclic.
\end{proposition}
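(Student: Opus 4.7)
The plan is to reduce disk transitivity to a statement about points, which is exactly what strict disk transitivity provides. Given a pair $(U,V)$ of nonempty open subsets of $X$ (assuming $X \neq \{0\}$, otherwise the statement is vacuous), I would first pick nonzero vectors $x \in U$ and $y \in V$. This is possible because in a nontrivial topological vector space the singleton $\{0\}$ is never open, so every nonempty open set meets $X \setminus \{0\}$.

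By strict disk transitivity applied to this pair $(x,y)$, there exist $\alpha \in \mathbb{D}$ and $T \in \Gamma$ with $\alpha T x = y$. Since $y \neq 0$, we automatically have $\alpha \neq 0$, so $\alpha \in \mathbb{D} \setminus \{0\}$. By linearity of $T$ we have $T(\alpha x) = \alpha T x = y \in V$, and since $x \in U$ we also have $\alpha x \in \alpha U$. Therefore $y \in T(\alpha U) \cap V$, showing this intersection is nonempty. Since $(U,V)$ was arbitrary, $\Gamma$ is disk transitive.

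For the consequence, I would invoke Theorem \ref{t1}: under the standing assumption that $X$ is a second countable Baire complex topological vector space, disk transitivity of $\Gamma$ implies that $\mathbb{D}C(\Gamma)$ is a dense $G_\delta$ subset of $X$; in particular $\mathbb{D}C(\Gamma) \neq \emptyset$, so $\Gamma$ is diskcyclic.

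I do not expect any serious obstacle here: the only subtle point is the existence of nonzero representatives in $U$ and $V$, which is automatic in a nontrivial topological vector space. The argument is essentially a direct translation of the pointwise condition into the open-set condition, with linearity $\alpha T x = T(\alpha x)$ doing all the work.
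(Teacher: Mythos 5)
Your proof is correct and follows essentially the same route as the paper: pick points $x\in U$, $y\in V$, apply strict disk transitivity to get $\alpha Tx=y$, and conclude $T(\alpha U)\cap V\neq\emptyset$, then invoke Theorem \ref{t1}. You are in fact slightly more careful than the paper, since you justify choosing \emph{nonzero} representatives (strict disk transitivity is only stated for nonzero pairs) and you verify $\alpha\neq 0$ as required by the definition of disk transitivity.
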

%%%%%%%%%%%%%%%%%%%%%%%%%%%%%%%%%%%%%%%%%%%%%%%%%%%%%%%%%%%%%%%%%%%%%%%%%%%%%%%%%%%%%%%%%%%%%%%%%%%%%%%%%%
\begin{proof}
Assume that $\Gamma$ is a strictly disk transitive set. If $U$ and $V$ are two nonempty open subsets of $X$, then there exist $x,$  $y\in X$ such that
$x\in U$ and $y\in V$. Since $\Gamma$ is strictly disk transitive, there exist $\alpha\in\mathbb{D}$ and $T\in\Gamma$ such that $\alpha Tx=y.$ Hence,
$$ \alpha Tx\in \alpha T(U)\hspace{0.3cm} \mbox{ and }\hspace{0.3cm}\alpha Tx\in V. $$
Thus, $\alpha T(U)\cap V\neq\emptyset,$ which implies that $\Gamma$ is disk transitive. By Theorem \ref{t1}, we deduce that $\Gamma$ is diskcyclic.
\end{proof}
%%%%%%%%%%%%%%%%%%%%%%%%%%%%%%%%%%%%%%%%%%%%%%%%%%%%%%%%%%%%%%%%%%%%%%%%%%%%%%%%%%%%%%%%%%%%%%%%%%%%%%%%%%
%%%%%%%%%%%%%%%%%%%%%%%%%%%%%%%%%%%%%%%%%%%%%%%%%%%%%%%%%%%%%%%%%%%%%%%%%%%%%%%%%%%%%%%%%%%%%%%%%%%%%%%%%%

In the following proposition, we prove that strictly disk transitivity of sets of operators is preserved under similarity.
%%%%%%%%%%%%%%%%%%%%%%%%%%%%%%%%%%%%%%%%%%%%%%%%%%%%%%%%%%%%%%%%%%%%%%%%%%%%%%%%%%%%%%%%%%%%%%%%%%%%%%%%%%
\begin{proposition}
Assume that $\Gamma\subset \mathcal{B}(X)$ and $\Gamma_1\subset \mathcal{B}(Y)$ are similar. Then $\Gamma$ is strictly disk transitive in $X$ if and only if $\Gamma_1$ is strictly disk transitive in $Y.$
\end{proposition}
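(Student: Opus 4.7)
The plan is to exploit that $\phi : X \longrightarrow Y$ is a continuous linear bijection with continuous inverse, so that (by linearity) $\phi$ and $\phi^{-1}$ each map nonzero vectors to nonzero vectors, and to transport strictly disk transitive pairs $(x,y)$ across $\phi$ in both directions using the intertwining relation $S\circ\phi=\phi\circ T$.

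For the forward implication, assume $\Gamma$ is strictly disk transitive and fix any nonzero $u,v\in Y$. Set $x:=\phi^{-1}(u)$ and $y:=\phi^{-1}(v)$; these are nonzero elements of $X$ since $\phi^{-1}$ is linear. By strict disk transitivity of $\Gamma$, there exist $\alpha\in\mathbb{D}$ and $T\in\Gamma$ with $\alpha Tx=y$. Choose $S\in\Gamma_1$ satisfying $S\circ\phi=\phi\circ T$, furnished by the similarity. Applying $\phi$ to the equality $\alpha Tx=y$ and using the intertwining gives
$$\alpha Su=\alpha S\phi(x)=\alpha\phi(Tx)=\phi(\alpha Tx)=\phi(y)=v,$$
so $\Gamma_1$ is strictly disk transitive.

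For the converse, one first observes that when $\phi$ is a homeomorphism the similarity relation is symmetric: from $S\circ\phi=\phi\circ T$ one deduces $T\circ\phi^{-1}=\phi^{-1}\circ S$, and (under the standing convention — used implicitly in the corollary following Proposition \ref{14} — that the correspondence $T\mapsto S$ is a bijection between $\Gamma$ and $\Gamma_1$) this provides, for every $S\in\Gamma_1$, a corresponding $T\in\Gamma$ with $S\phi=\phi T$. Assuming $\Gamma_1$ is strictly disk transitive, take nonzero $x,y\in X$, put $u:=\phi(x)$, $v:=\phi(y)$, and obtain $\alpha\in\mathbb{D}$ and $S\in\Gamma_1$ with $\alpha Su=v$; the corresponding $T\in\Gamma$ then satisfies $\phi(\alpha Tx)=\alpha S\phi(x)=v=\phi(y)$, whence injectivity of $\phi$ yields $\alpha Tx=y$, showing $\Gamma$ is strictly disk transitive.

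The only subtle point — and the step I would flag explicitly — is the bijective matching between $\Gamma$ and $\Gamma_1$ needed in the converse direction; the rest is a routine transport along the homeomorphism $\phi$, parallel to the proof of Proposition \ref{prop1} but using pointwise equalities $\alpha Tx=y$ in place of open-set inclusions.
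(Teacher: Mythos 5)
Your proof is correct and follows essentially the same route as the paper: transport the pointwise equality $\alpha Tx=y$ through $\phi$ via the intertwining relation for the forward direction, and through $\phi^{-1}$ for the converse. The subtlety you flag in the converse direction --- that the stated definition of similarity only supplies an $S\in\Gamma_1$ for each $T\in\Gamma$, not a $T$ for each $S$ --- is a genuine point, but the paper itself glosses over it with the phrase ``we do the same proof using $\phi^{-1}$'', so you are not omitting anything the paper's argument actually supplies.
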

%%%%%%%%%%%%%%%%%%%%%%%%%%%%%%%%%%%%%%%%%%%%%%%%%%%%%%%%%%%%%%%%%%%%%%%%%%%%%%%%%%%%%%%%%%%%%%%%%%%%%%%%%%
\begin{proof}
Since $\Gamma$ and $\Gamma_1$ are similar, there exists a homeomorphism $\phi$ : $X\longrightarrow Y$ such that for all $T\in\Gamma,$ there exists $S\in\Gamma_1$ satisfying $S\circ\phi=\phi\circ T$. Assume that $\Gamma$ is strictly disk transitive in $X$.
Let $x$, $y\in Y$. There exist $a,$ $b\in X$ such that $\phi(a)=x$ and $\phi(b)=y$. Since $\Gamma$ is
strictly disk transitive in $X$, there exists $\alpha\in\mathbb{D}$ and $T\in \Gamma$ such that $\alpha
Ta=b$, this implies that $\alpha(\phi\circ T)(a)=\phi(b)$. Since $\Gamma$ and $\Gamma_1$ are similar, it follows that there exists $S\in\Gamma_1$ such that $S\circ\phi=\phi\circ T$. Hence, $\alpha Sx=y$. Thus, $\Gamma_1$
is strictly disk transitive in $Y$.

For the converse, we do the same proof using $\phi^{-1}$ the invertible operator of $\phi$ and the proof is completed.
\end{proof}
%%%%%%%%%%%%%%%%%%%%%%%%%%%%%%%%%%%%%%%%%%%%%%%%%%%%%%%%%%%%%%%%%%%%%%%%%%%%%%%%%%%%%%%%%%%%%%%%%%%%%%%%%%
%%%%%%%%%%%%%%%%%%%%%%%%%%%%%%%%%%%%%%%%%%%%%%%%%%%%%%%%%%%%%%%%%%%%%%%%%%%%%%%%%%%%%%%%%%%%%%%%%%%%%%%%%%
 
Let $x$ be an element of a complex topological vector space $X$. Define
$\mathbb{D}_x:=\{\alpha x\mbox{ : }\alpha\in\mathbb{D}\}.$
\begin{theorem}
For each pair of nonzero vectors $x$, $y\in X$ with $y\notin \mathbb{D}_x$, there exists
a SOT-dense set $\Gamma_{xy}\subset\mathcal{B}(X)$ that is not strictly disk transitive. Furthermore, $\Gamma\subset\mathcal{B}(X)$ is a dense nonstrictly
disk transitive set if and only if $\Gamma$ is a dense subset of $\Gamma_{xy}$ for some $x$, $y\in X.$
\end{theorem}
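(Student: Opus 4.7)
The plan is to define
\[\Gamma_{xy} := \{T \in \mathcal{B}(X) : \alpha T x \neq y \text{ for all } \alpha \in \mathbb{D}\},\]
equivalently $\Gamma_{xy}=\{T : Tx \notin E_y\}$ where $E_y := \{\gamma y : \gamma\in\mathbb{C},\ |\gamma|\geq 1\}\subset \mathbb{C}y$. That $\Gamma_{xy}$ is not strictly disk transitive is tautological, witnessed by the pair $(x,y)$ itself. The hypothesis $y\notin\mathbb{D}_x$ is precisely what guarantees $I\in\Gamma_{xy}$, giving a canonical seed element.

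For SOT-density I would fix $T_0 \in \mathcal{B}(X)$ and a basic SOT-neighborhood
\[\Omega = \{S : Se_i - T_0 e_i \in U,\ i=1,\ldots,k\}\]
with $e_1,\ldots,e_k$ linearly independent and $U$ a balanced neighborhood of $0$, then produce $S \in \Omega \cap \Gamma_{xy}$ as a rank-one perturbation $S = T_0 + \varphi \otimes w$ with $\varphi \in X^*$, $w \in X$. If $x \notin \operatorname{span}(e_1,\ldots,e_k)$, Hahn--Banach yields $\varphi$ with $\varphi(e_i)=0$ and $\varphi(x)=1$; setting $w := -T_0 x$ gives $Se_i = T_0 e_i$ and $Sx = 0 \notin E_y$ (since $y\neq 0$). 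If $x = \sum c_j e_j$ with some $c_{j_0}\neq 0$, I would pick $\varphi$ dual to $e_{j_0}$ (vanishing on the other $e_i$) and choose $w\in U$ such that $T_0 x + c_{j_0} w \notin E_y$; such a $w$ exists because $E_y$ lies in the one-dimensional subspace $\mathbb{C}y$, a proper closed subspace which cannot absorb the neighborhood $c_{j_0} U$ of $0$.

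The furthermore clause is then essentially formal. For the backward direction, any SOT-dense $\Gamma \subset \Gamma_{xy}$ inherits failure of strict disk transitivity from $\Gamma_{xy}$, witnessed again by $(x,y)$. For the forward direction, if $\Gamma$ is SOT-dense and not strictly disk transitive, unfolding the definition yields nonzero $x, y \in X$ with $\alpha Tx \neq y$ for every $T\in\Gamma$ and $\alpha\in\mathbb{D}$, which is precisely the condition $\Gamma \subset \Gamma_{xy}$; combined with the density hypothesis on $\Gamma$, this completes the characterization.

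The main obstacle is the SOT-density argument in the case $x\in\operatorname{span}(e_1,\ldots,e_k)$, where the approximation constraints and the avoidance of $E_y$ have to be reconciled simultaneously; this is the only step that genuinely invokes both $y\notin \mathbb{D}_x$ and the availability of sufficiently many continuous linear functionals (local convexity, for the Hahn--Banach step). A secondary subtlety is that the argument implicitly needs $\dim X \geq 2$, since in dimension one $\mathcal{B}(X) = \mathbb{C}$ admits no SOT-dense proper subset and the first assertion would fail outright.
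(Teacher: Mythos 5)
Your definition of $\Gamma_{xy}=\{T:\;y\notin\mathbb{D}_{Tx}\}$, the tautological failure of strict disk transitivity, and your handling of the ``furthermore'' equivalence all coincide with the paper's proof; the genuine divergence is in the SOT-density step. The paper takes $S$ in a basic SOT-open set $\Omega$ and, when $y\in\mathbb{D}_{Sx}$, passes to $S+\frac{1}{n}I$ for large $n$, asserting that this lands in $\Omega\cap\Gamma_{xy}$. That perturbation needs no linear functionals and so works in an arbitrary topological vector space, but the assertion $y\notin\mathbb{D}_{(S+\frac{1}{n}I)x}$ is only immediate when $x$ and $y$ are linearly independent: if $y=cx$ with $|c|>1$ and, say, $Sx=y$, then $(S+\frac{1}{n}I)x=(c+\frac1n)x$ and $y$ still lies in its disk orbit, so the sign or phase of the perturbation must be chosen -- a sub-case the paper glosses over. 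Your rank-one perturbation $T_0+\varphi\otimes w$ instead splits on whether $x$ lies in the span of $e_1,\dots,e_k$, kills $T_0x$ outright in the first case, and in the second reduces the avoidance of $E_y$ to the correct observation that a neighborhood of $0$ cannot sit inside an affine line once $\dim X\ge 2$; this is more careful and fully rigorous, but the Hahn--Banach step means your proof needs local convexity (or at least a point-separating dual), a hypothesis the paper's identity-perturbation avoids and that the paper never assumes. Both arguments tacitly require $\dim X\ge 2$, which you rightly flag. In short: your proof is correct, follows the paper's architecture except for the density argument, and there trades generality of the underlying space for a complete case analysis.
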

\begin{proof}
Fix nonzero vectors $x,$ $y\in X$ such that $y\notin\mathbb{D}_x$ and put 
$\Gamma_{xy}=\{T\in\mathcal{B}(X) \mbox{ : }y\notin\mathbb{D}_{Tx}\}.$

It is clear that $\Gamma_{xy}$ is not strictly disk transitive. Let $\Omega$ be a nonempty
SOT-open set in $\mathcal{B}(X)$ and $S\in\Omega$. If $Sx$ and $y$ are such that
$y\notin\mathbb{D}_{Sx}$, then $S\in\Omega\cap\Gamma_{xy}$.
Otherwise, putting $S_n = S+\frac{1}{n}I$, we see that $S_k\in\Omega$ for some $k$, but $S_kx$ and $y$ are such that
$y\notin\mathbb{D}_{S_kx}$. Hence, $\Omega\cap\Gamma_{xy}\neq\emptyset$ and the proof is completed.

We prove the second assertion of the theorem. Suppose that $\Gamma$ is a dense subset of $\mathcal{B}(X)$ that is not
strictly disk transitive. Then there are nonzero vectors $x,$ $y\in X$ such that $y\notin\mathbb{D}_{Tx}$
for all $T\in \Gamma$ and hence $\Gamma\subset \Gamma_{xy}$.
To show that $\Gamma$ is dense in $\Gamma_{xy}$, assume that $\Omega_0$ is an open subset of 
$\Gamma_{xy}$. Thus, $\Omega_0= \Gamma_{xy}\cap\Omega$ for some open
set $\Omega$ in $\mathcal{B}(X)$. Then $\Gamma\cap \Omega_0= \Gamma\cap \Omega\neq\emptyset$.

For the converse, let $\Gamma$ be a dense subset of $\Gamma_{xy}$ for some $x,$ $y\in X$. Then $\Gamma$ is not strictly disk transitive. Also, since $\Gamma_{xy}$ is a dense subset of $\mathcal{B}(X)$, we conclude that $\Gamma$ is also dense in $\mathcal{B}(X)$. Indeed,
if $\Omega$ is any open set in $\mathcal{B}(X)$ then $\Omega\cap\Gamma_{xy}\neq\emptyset$ since $\Gamma_{xy}$ is dense in $\mathcal{B}(X)$. On the other hand, $\Omega\cap\Gamma_{xy}$
is open in $\Gamma_{xy}$ and so it must intersect $\Gamma$ since $\Gamma$ is dense in $\Gamma_{xy}$. Thus,  
$\Omega\cap\Gamma\neq\emptyset$ and so $\Gamma$ is dense in $\mathcal{B}(X)$.
\end{proof}
%%%%%%%%%%%%%%%%%%%%%%%%%%%%%%%%%%%%%%%%%%%%%%%%%%%%%%%%%%%%%%%%%%%%%%%%%%%%%%%%%%%%%%%%%%%%%%%%%%%%%%%%%%

%%%%%%%%%%%%%%%%%%%%%%%%%%%%%%%%%%%%%%%%%%%%%%%%%%%%%%%%%%%%%%%%%%%%%%%%%%%%%%%%%%%%%%%%%%%%%%%%%%%%%%%%%%
In the following definition we introduce the notion of diskcyclic transitivity of set of operators. The case of hypercyclicity (resp, supercyclicity) was introduced in \cite{AKH} (resp \cite{AOS}).
%%%%%%%%%%%%%%%%%%%%%%%%%%%%%%%%%%%%%%%%%%%%%%%%%%%%%%%%%%%%%%%%%%%%%%%%%%%%%%%%%%%%%%%%%%%%%%%%%%%%%%%%%%
\begin{definition}
A set $\Gamma\subset \mathcal{B}(X)$ is said to be diskcyclic transitive if 
$$\mathbb{D}C(\Gamma)=X\setminus\{0\}.$$
\end{definition}
%%%%%%%%%%%%%%%%%%%%%%%%%%%%%%%%%%%%%%%%%%%%%%%%%%%%%%%%%%%%%%%%%%%%%%%%%%%%%%%%%%%%%%%%%%%%%%%%%%%%%%%%%%
\begin{remark}
Let $X$ be a complex topological vector space. An operator $T\in \mathcal{B}(X)$ is diskcyclic transitive as an operator if and only if the set
$$\Gamma=\{T^n\mbox{ : }n\geq0\} $$
is diskcyclic transitive as a set of operators.
\end{remark}
%%%%%%%%%%%%%%%%%%%%%%%%%%%%%%%%%%%%%%%%%%%%%%%%%%%%%%%%%%%%%%%%%%%%%%%%%%%%%%%%%%%%%%%%%%%%%%%%%%%%%%%%%%

It is clear that a diskcyclic transitive set is diskcyclic. Moreover, the next proposition shows that diskcyclic transitivity of sets of operators implies disk transitivity.
%%%%%%%%%%%%%%%%%%%%%%%%%%%%%%%%%%%%%%%%%%%%%%%%%%%%%%%%%%%%%%%%%%%%%%%%%%%%%%%%%%%%%%%%%%%%%%%%%%%%%%%%%%
\begin{proposition}
If $\Gamma$ is diskcyclic transitive, then it is disk transitive. 
\end{proposition}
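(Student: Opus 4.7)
The plan is to unfold the definitions and leverage a single diskcyclic vector to witness the transitivity condition for an arbitrary pair of open sets. Specifically, fix nonempty open sets $U, V \subset X$; the goal is to produce $\alpha \in \mathbb{D}\setminus\{0\}$ and $T \in \Gamma$ with $T(\alpha U) \cap V \neq \emptyset$ (which equals $\alpha T(U) \cap V$ by linearity).

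First I would pick a nonzero $x \in U$. This is where one needs the (tacit) assumption that $X$ is a Hausdorff topological vector space of dimension at least $1$, so any nonempty open set in $X$ meets $X \setminus \{0\}$. Next, by shrinking $V$ to a smaller nonempty open subset if necessary, I would arrange that $0 \notin V$; again this uses Hausdorffness to separate $0$ from some point of $V$. These reductions are the only mildly delicate part: we must avoid the degenerate case where the only witness $\alpha T x$ landing in $V$ is $0$, which would force $\alpha = 0$ and fail the definition of disk transitivity.

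With $x$ chosen, the hypothesis that $\Gamma$ is diskcyclic transitive, i.e.\ $\mathbb{D}C(\Gamma) = X \setminus \{0\}$, gives $x \in \mathbb{D}C(\Gamma)$. Hence $\mathbb{D}Orb(\Gamma,x)$ is dense in $X$ and meets $V$, so there exist $\alpha \in \mathbb{D}$ and $T \in \Gamma$ with $\alpha Tx \in V$. Because $0 \notin V$, necessarily $\alpha \neq 0$, so $\alpha \in \mathbb{D}\setminus\{0\}$.

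Finally, since $x \in U$ we have $\alpha x \in \alpha U$, so $T(\alpha x) = \alpha Tx \in T(\alpha U)$, and simultaneously $\alpha Tx \in V$. Thus $T(\alpha U) \cap V \neq \emptyset$, which is the disk transitivity of $\Gamma$. The main obstacle, as noted, is ensuring that the scalar $\alpha$ produced by the density argument is nonzero; this is handled cleanly by the preliminary reduction to a $V$ avoiding the origin.
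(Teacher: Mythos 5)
Your proof follows essentially the same route as the paper's: pick a nonzero $x\in U$, use $\mathbb{D}C(\Gamma)=X\setminus\{0\}$ to find $\alpha\in\mathbb{D}$ and $T\in\Gamma$ with $\alpha Tx\in V$, and conclude $\alpha T(U)\cap V\neq\emptyset$. Your extra step of shrinking $V$ away from the origin to guarantee $\alpha\neq 0$ (as the paper's definition of disk transitivity requires $\alpha\in\mathbb{D}\setminus\{0\}$) is a detail the paper's proof silently skips, so your version is actually slightly more careful.
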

\begin{proof}
Let $U$ and $V$ be two nonempty open subsets of $X$. There exists $x\in X\setminus\{0\}$ such that $x\in U$. Since $\Gamma$ is diskcyclic transitive, there exists $\alpha\in\mathbb{D}$ and $T\in\Gamma$ such that $\alpha Tx\in V$. This implies that $\alpha T(U)\cap V\neq\emptyset.$ Hence, $\Gamma$ is disk transitive.
\end{proof}
%%%%%%%%%%%%%%%%%%%%%%%%%%%%%%%%%%%%%%%%%%%%%%%%%%%%%%%%%%%%%%%%%%%%%%%%%%%%%%%%%%%%%%%%%%%%%%%%%%%%%%%%%%
\begin{remark}
Let $X$ be a complex topological vector space and $\Gamma$ a subset of $\mathcal{B}(X)$. Assume that $X$ is without isolated point and $\Gamma$ is diskcyclic transitive. To prove that $\Gamma$ is disk transitive 
one can remarks that $\overline{X \setminus\{0\}}=X$ and use Theorem \ref{t1}.
\end{remark}
%%%%%%%%%%%%%%%%%%%%%%%%%%%%%%%%%%%%%%%%%%%%%%%%%%%%%%%%%%%%%%%%%%%%%%%%%%%%%%%%%%%%%%%%%%%%%%%%%%%%%%%%%%

The diskcyclic transitivity is preserved under similarity as show the next proposition.
%%%%%%%%%%%%%%%%%%%%%%%%%%%%%%%%%%%%%%%%%%%%%%%%%%%%%%%%%%%%%%%%%%%%%%%%%%%%%%%%%%%%%%%%%%%%%%%%%%%%%%%%%%
\begin{proposition}
Assume that $\Gamma\subset \mathcal{B}(X)$ and $\Gamma_1\subset \mathcal{B}(Y)$ are similar. Then, $\Gamma$ is a diskcyclic transitive set in $X$ if and only if $\Gamma_1$ is a diskcyclic transitive set in $Y$.
\end{proposition}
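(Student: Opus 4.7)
The plan is to reduce the statement to Proposition \ref{14}, which already shows that diskcyclicity is preserved under quasi-similarity and, more importantly, gives the inclusion $\phi(\mathbb{D}C(\Gamma))\subset\mathbb{D}C(\Gamma_1)$. Since similarity implies quasi-similarity, this inclusion is available for free once we assume $\Gamma$ and $\Gamma_1$ are similar via a homeomorphism $\phi$.

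For the forward direction, I would fix $\phi:X\longrightarrow Y$ realizing the similarity and assume $\mathbb{D}C(\Gamma)=X\setminus\{0\}$. Proposition \ref{14} gives $\phi(X\setminus\{0\})\subset\mathbb{D}C(\Gamma_1)$. Because $\phi$ is a continuous \emph{linear} bijection, $\phi(0)=0$ and $\phi$ sends $X\setminus\{0\}$ exactly onto $Y\setminus\{0\}$, so $Y\setminus\{0\}\subset\mathbb{D}C(\Gamma_1)$. The reverse inclusion is automatic: if $y=0$ then $\mathbb{D}\mathrm{Orb}(\Gamma_1,y)=\{0\}$, which is not dense in $Y$ (assuming $Y\neq\{0\}$, the only case of interest), so $0\notin\mathbb{D}C(\Gamma_1)$. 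Combining both inclusions yields $\mathbb{D}C(\Gamma_1)=Y\setminus\{0\}$, i.e.\ $\Gamma_1$ is diskcyclic transitive.

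For the converse, I would use $\phi^{-1}:Y\longrightarrow X$, which is again a linear homeomorphism, and observe that it witnesses the (quasi-)similarity of $\Gamma_1$ to $\Gamma$: indeed, for each $S\in\Gamma_1$ the relation $S\circ\phi=\phi\circ T$ characterizing similarity can be rewritten as $T\circ\phi^{-1}=\phi^{-1}\circ S$, so the same reasoning goes through with the roles of $(X,\Gamma,\phi)$ and $(Y,\Gamma_1,\phi^{-1})$ interchanged. Applying Proposition \ref{14} in this reversed setup yields $X\setminus\{0\}\subset\mathbb{D}C(\Gamma)$ from the assumption $\mathbb{D}C(\Gamma_1)=Y\setminus\{0\}$, and the same ``zero is never diskcyclic'' remark closes the inclusion.

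The only real obstacle is administrative: I need to verify that the similarity relation is symmetric in the sense used here, so that Proposition \ref{14} can legitimately be invoked in both directions. Once one rewrites the intertwining $S\circ\phi=\phi\circ T$ as $T=\phi^{-1}\circ S\circ\phi$, symmetry is transparent and the proof is essentially a one-line application of Proposition \ref{14} in each direction, together with the elementary fact that a linear homeomorphism maps $X\setminus\{0\}$ bijectively onto $Y\setminus\{0\}$.
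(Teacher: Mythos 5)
Your proposal is correct and follows essentially the same route as the paper: both directions are reduced to Proposition \ref{14} via $\phi$ and then $\phi^{-1}$. You are in fact slightly more careful than the paper, which compresses your observations that a linear homeomorphism carries $X\setminus\{0\}$ onto $Y\setminus\{0\}$ and that $0$ is never a diskcyclic vector into the single phrase ``since $\phi$ is a homeomorphism, the result holds.''
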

%%%%%%%%%%%%%%%%%%%%%%%%%%%%%%%%%%%%%%%%%%%%%%%%%%%%%%%%%%%%%%%%%%%%%%%%%%%%%%%%%%%%%%%%%%%%%%%%%%%%%%%%%%
\begin{proof}
Since $\Gamma$ and $\Gamma_1$ are similar, there exists a homeomorphism $\phi$ : $X\longrightarrow Y$ such that for all $T\in\Gamma,$ there exists $S\in\Gamma_1$ satisfying $S\circ\phi=\phi\circ T$. 
If $\Gamma$ is a diskcyclic transitive in $X$, then by Proposition \ref{14}, 
$$\phi(\mathbb{D}C(\Gamma))\subset \mathbb{D}C(\Gamma_1).$$
 Since $\phi$ is homeomorphism, the result holds.

For the converse, we do the same proof using $\phi^{-1}$ the invertible operator of $\phi$, and the proof is completed.
\end{proof}

%%%%%%%%%%%%%%%%%%%%%%%%%%%%%%%%%%%%%%%%%%%%%%%%%%%%%%%%%%%%%%%%%%%%%%%%%%%%%%%%%%%%%%%%%%%%%%%%%%%%%%%%%%
%%%%%%%%%%%%%%%%%%%%%%%%%%%%%%%%%%%%%%%%%%%%%%%%%%%%%%%%%%%%%%%%%%%%%%%%%%%%%%%%%%%%%%%%%%%%%%%%%%%%%%%%%%
Assume that $X$ is a topological vector space and $\Gamma$ a subset of $\mathcal{B}(X)$. The following result shows that the SOT-closure of $\Gamma$ is not large enough than $\Gamma$ to have more diskcyclic vectors.
%%%%%%%%%%%%%%%%%%%%%%%%%%%%%%%%%%%%%%%%%%%%%%%%%%%%%%%%%%%%%%%%%%%%%%%%%%%%%%%%%%%%%%%%%%%%%%%%%%%%%%%%%%
\begin{proposition}\label{prop3}
If $\overline{\Gamma}$ stands for the SOT-closure of $\Gamma$ then $\Gamma$ is diskcyclic if and only if $\overline{\Gamma}$ is diskcyclic. Moreover, $\Gamma$ and $\overline{\Gamma}$ have the same diskcyclic vectors, that is
 $$\mathbb{D}C(\Gamma)=\mathbb{D}C(\overline{\Gamma}).$$
\end{proposition}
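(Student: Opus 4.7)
The plan is to prove the two inclusions $\mathbb{D}C(\Gamma)\subseteq\mathbb{D}C(\overline{\Gamma})$ and $\mathbb{D}C(\overline{\Gamma})\subseteq\mathbb{D}C(\Gamma)$; the ``if and only if'' claim about diskcyclicity will then be immediate, since it just asserts that either set of diskcyclic vectors is nonempty exactly when the other is.

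The first inclusion is essentially free. Since $\Gamma\subseteq\overline{\Gamma}$, one has $\mathbb{D}Orb(\Gamma,x)\subseteq\mathbb{D}Orb(\overline{\Gamma},x)$ for every $x\in X$, so density of the smaller orbit forces density of the larger. I would dispose of this in one sentence.

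The content is in the reverse inclusion. I would fix $x\in\mathbb{D}C(\overline{\Gamma})$ (observing that necessarily $x\neq0$, for otherwise $\mathbb{D}Orb(\overline{\Gamma},x)=\{0\}$ would fail to be dense) and an arbitrary nonempty open $V\subseteq X$, and produce $\alpha\in\mathbb{D}$ and $T\in\Gamma$ with $\alpha T x\in V$. If $0\in V$, just pick $\alpha=0$ and any $T\in\Gamma$ (diskcyclicity already forces $\Gamma\neq\emptyset$). Otherwise the assumption that $x\in\mathbb{D}C(\overline{\Gamma})$ gives $\alpha\in\mathbb{D}$ and $S\in\overline{\Gamma}$ with $\alpha S x\in V$, and the condition $0\notin V$ forces $\alpha\neq0$. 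I would then pick a balanced neighborhood $W$ of $0\in X$ with $\alpha S x+\alpha W\subseteq V$ (equivalently $Sx+W\subseteq\alpha^{-1}V$) and use the SOT-neighborhood
$$\Omega:=\{R\in\mathcal{B}(X):Rx-Sx\in W\}$$
of $S$, which is legitimate because $\{x\}$ is linearly independent as $x\neq0$. Since $S$ lies in the SOT-closure of $\Gamma$, there is $T\in\Gamma\cap\Omega$; then $Tx-Sx\in W$ gives $\alpha T x\in\alpha Sx+\alpha W\subseteq V$, which is what we wanted.

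I do not expect a real obstacle here. The argument is a soft topological one built entirely on the SOT-continuity of the evaluation $R\mapsto Rx$, which is exactly the content of the neighborhood basis used to define SOT in the paper. The only bookkeeping points are the mild case split to ensure $\alpha\neq0$ and the observation $x\neq 0$ needed to legitimately define a single-vector SOT-neighborhood of $S$.
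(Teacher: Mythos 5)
Your proof is correct and follows essentially the same route as the paper: the inclusion $\mathbb{D}C(\Gamma)\subseteq\mathbb{D}C(\overline{\Gamma})$ is immediate from orbit containment, and the reverse inclusion is obtained by exploiting the SOT-continuity of $R\mapsto Rx$ to replace $S\in\overline{\Gamma}$ by a nearby $T\in\Gamma$ (the paper phrases this by directly declaring $\{R:\alpha Rx\in V\}$ an SOT-neighborhood of $S$, while you unwind it via a balanced neighborhood $W$; your extra care with the cases $\alpha=0$ and $x\neq 0$ only makes the argument more complete).
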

%%%%%%%%%%%%%%%%%%%%%%%%%%%%%%%%%%%%%%%%%%%%%%%%%%%%%%%%%%%%%%%%%%%%%%%%%%%%%%%%%%%%%%%%%%%%%%%%%%%%%%%%%%
\begin{proof}
We only need to prove that 
$\mathbb{D}C(\overline{\Gamma})\subset \mathbb{D}C(\Gamma).$
 Fix $x\in \mathbb{D}C(\overline{\Gamma})$ and let $U$ be an arbitrary open subset of $X$. Then there is some $\alpha\in\mathbb{D}$ and $T\in\overline{\Gamma}$ such that $\alpha Tx \in U$. The set 
 $\Omega = \{S\in\mathcal{B}(X) \mbox{ : } \alpha Sx \in U\}$
 is a SOT-neighborhood
of $T$ and so it must intersect $\Gamma$. Therefore, there is some $S\in \Gamma$ such that $\alpha Sx \in U$ and this shows that
$x\in \mathbb{D}C(\Gamma)$.
\end{proof}
\begin{corollary}
A set $\Gamma$ is diskcyclic transitive if and only if $\overline{\Gamma}$ is diskcyclic transitive.
\end{corollary}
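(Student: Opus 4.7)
The plan is to reduce the statement immediately to Proposition \ref{prop3}. By definition, a set $\Lambda \subset \mathcal{B}(X)$ is diskcyclic transitive exactly when $\mathbb{D}C(\Lambda) = X \setminus \{0\}$. Applying this criterion to both $\Gamma$ and $\overline{\Gamma}$, the corollary amounts to the equivalence
\[
\mathbb{D}C(\Gamma) = X\setminus\{0\} \iff \mathbb{D}C(\overline{\Gamma}) = X\setminus\{0\}.
\]

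The key input is the equality $\mathbb{D}C(\Gamma) = \mathbb{D}C(\overline{\Gamma})$ established in Proposition \ref{prop3}. Once we have that, both sides of the displayed equivalence are literally the same statement, so the corollary follows at once.

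Concretely, I would write: assume $\Gamma$ is diskcyclic transitive, so $\mathbb{D}C(\Gamma) = X\setminus\{0\}$; by Proposition \ref{prop3}, $\mathbb{D}C(\overline{\Gamma}) = \mathbb{D}C(\Gamma) = X\setminus\{0\}$, hence $\overline{\Gamma}$ is diskcyclic transitive. The converse is identical, since Proposition \ref{prop3} gives equality of the diskcyclic vector sets in both directions. There is no real obstacle here, as the nontrivial work (the SOT-neighborhood argument showing $\mathbb{D}C(\overline{\Gamma}) \subset \mathbb{D}C(\Gamma)$) has already been carried out in the preceding proposition; the corollary is a one-line consequence.
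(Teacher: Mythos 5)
Your proof is correct and follows essentially the same route as the paper: both reduce the corollary to the equality $\mathbb{D}C(\Gamma)=\mathbb{D}C(\overline{\Gamma})$ from Proposition \ref{prop3} together with the definition of diskcyclic transitivity. If anything, your version is slightly cleaner, since you note explicitly that the equality of diskcyclic vector sets makes both directions symmetric, whereas the paper dismisses one direction as obvious.
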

%%%%%%%%%%%%%%%%%%%%%%%%%%%%%%%%%%%%%%%%%%%%%%%%%%%%%%%%%%%%%%%%%%%%%%%%%%%%%%%%%%%%%%%%%%%%%%%%%%%%%%%%%%
\begin{proof}
Assume that $\overline{\Gamma}$ is diskcyclic transitive, then 
$\mathbb{D}C(\overline{\Gamma})=X\setminus\{0\}$.
Since by Proposition \ref{prop3}, $\mathbb{D}C(\Gamma)=\mathbb{D}C(\overline{\Gamma})$, it follows that $\mathbb{D}C(\Gamma)=X\setminus\{0\}$. Hence, $\Gamma$ is diskcyclic transitive. The implication $\Gamma$ is diskcyclic implies $\overline{\Gamma}$ is diskcyclic is obvious which complete the proof.
\end{proof}
%%%%%%%%%%%%%%%%%%%%%%%%%%%%%%%%%%%%%%%%%%%%%%%%%%%%%%%%%%%%%%%%%%%%%%%%%%%%%%%%%%%%%%%%%%%%%%%%%%%%%%%%%%
\begin{corollary}
A set $\Gamma$ is disk transitive if and only if $\overline{\Gamma}$ is disk transitive.
\end{corollary}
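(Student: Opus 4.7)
The forward direction is immediate: since $\Gamma\subset\overline{\Gamma}$, any pair $(\alpha,T)\in(\mathbb{D}\setminus\{0\})\times\Gamma$ witnessing the disk transitivity condition $T(\alpha U)\cap V\neq\emptyset$ is simultaneously a witness for $\overline{\Gamma}$. So the entire content of the corollary is in the converse, and the natural plan is to imitate the SOT-density trick already used in Proposition \ref{prop3}.

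Concretely, assume $\overline{\Gamma}$ is disk transitive and fix nonempty open sets $U,V\subset X$. By disk transitivity of $\overline{\Gamma}$, there exist $\alpha\in\mathbb{D}\setminus\{0\}$ and $T\in\overline{\Gamma}$ with $T(\alpha U)\cap V\neq\emptyset$, i.e.\ some $u\in U$ with $\alpha Tu\in V$. Consider
$$\Omega=\{S\in\mathcal{B}(X) \mbox{ : }\alpha Su\in V\}.$$
Because SOT convergence is exactly pointwise convergence, the evaluation map $S\mapsto Su$ is SOT-continuous, and since multiplication by the fixed scalar $\alpha$ is continuous on $X$, the set $\Omega$ is a SOT-open neighborhood of $T$. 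As $T\in\overline{\Gamma}$, the neighborhood $\Omega$ must intersect $\Gamma$; pick $S\in\Omega\cap\Gamma$. Then $\alpha Su\in V$ and $\alpha u\in\alpha U$, so $S(\alpha U)\cap V\neq\emptyset$, proving that $\Gamma$ is disk transitive.

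The only delicate point is making sure $\Omega$ is a genuine SOT-neighborhood of $T$ given the paper's slightly restrictive definition of SOT basic opens (which requires linearly independent test vectors). If $u\neq 0$ the singleton $\{u\}$ is linearly independent and a basic SOT-neighborhood of $T$ inside $\Omega$ is produced directly by choosing a balanced open $W\subset X$ with $\alpha Tu+W\subset V$ and taking $\{S:Su-Tu\in \alpha^{-1}W\}$. The case $u=0$ is a trivial afterthought: then $\alpha Tu=0\in V$, and one may simply replace $u$ by any nonzero $u'\in U$ close enough to $0$ that $\alpha Tu'$ still lies in $V$ (using continuity of $T$ and the fact that $U$, being a nonempty open set in a nontrivial TVS, contains nonzero points). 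I expect this technicality to be the only real obstacle; once handled, the argument is a direct transcription of the proof of Proposition \ref{prop3}, with ``$\alpha Tx\in U$'' replaced by ``$\alpha Tu\in V$''.
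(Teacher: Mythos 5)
Your proof is correct, but it takes a genuinely different route from the paper. The paper disposes of this corollary in one line, as a ``direct consequence of Theorem \ref{t1} and Proposition \ref{prop3}'': disk transitivity of $\Gamma$ is equivalent (by Theorem \ref{t1}) to density of $\mathbb{D}C(\Gamma)$, and Proposition \ref{prop3} gives $\mathbb{D}C(\Gamma)=\mathbb{D}C(\overline{\Gamma})$, so the two transitivity statements are equivalent. You instead rerun the SOT-approximation argument from the \emph{proof} of Proposition \ref{prop3} directly on the transitivity condition: a witness $T\in\overline{\Gamma}$ with $\alpha Tu\in V$ is perturbed to some $S\in\Gamma$ lying in the SOT-neighborhood $\{S:\alpha Su\in V\}$. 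Both arguments are sound, but they buy different things. The paper's route is shorter yet silently imports the hypotheses of Theorem \ref{t1} --- $X$ must be a second countable Baire space --- even though the corollary's statement does not mention them; your direct argument needs no Baire or countability assumption and so actually proves the corollary in greater generality, at the cost of repeating the $\Omega$-neighborhood computation. Your handling of the two technical points (the paper's basic SOT-neighborhoods require linearly independent test vectors, hence the need for $u\neq 0$; and the degenerate case $u=0$ is absorbed by moving to a nearby nonzero $u'\in U\cap(\alpha T)^{-1}(V)$, which exists because neighborhoods of $0$ in a nontrivial TVS are absorbing) is careful and correct.
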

%%%%%%%%%%%%%%%%%%%%%%%%%%%%%%%%%%%%%%%%%%%%%%%%%%%%%%%%%%%%%%%%%%%%%%%%%%%%%%%%%%%%%%%%%%%%%%%%%%%%%%%%%%
\begin{proof}
Direct consequence of Theorem \ref{t1} and Proposition \ref{prop3}.
\end{proof}
%%%%%%%%%%%%%%%%%%%%%%%%%%%%%%%%%%%%%%%%%%%%%%%%%%%%%%%%%%%%%%%%%%%%%%%%%%%%%%%%%%%%%%%%%%%%%%%%%%%%%%%%%%

In the next definition, we introduce the diskcyclic criterion of sets of operators which generalizes the definition of diskcyclic criterion of operators.
%%%%%%%%%%%%%%%%%%%%%%%%%%%%%%%%%%%%%%%%%%%%%%%%%%%%%%%%%%%%%%%%%%%%%%%%%%%%%%%%%%%%%%%%%%%%%%%%%%%%%%%%%%
\begin{definition}\label{cc}
We say that $\Gamma\subset\mathcal{B}(X)$ satisfies the criterion of diskcyclicity if there exist two dense subsets $X_0$ and $Y_0$ of $X$ and sequences $\{k\}$ of positives integers, $\{\alpha_k\}$ of $\mathbb{D}\setminus\{0\}$, $\{T_k\}$ of $\Gamma$ and a sequence of maps $S_k$ : $Y_0\longrightarrow X$  such that$:$
\begin{itemize}
\item[$(i)$] $\alpha_k T_kx\longrightarrow 0$ for all $x\in X_0$;
\item[$(ii)$] $\alpha_k^{-1} S_kx\longrightarrow 0$ for all $y\in Y_0$;
\item[$(iii)$] $T_kS_ky\longrightarrow y$ for all $y\in Y_0$.
\end{itemize}
\end{definition}
%%%%%%%%%%%%%%%%%%%%%%%%%%%%%%%%%%%%%%%%%%%%%%%%%%%%%%%%%%%%%%%%%%%%%%%%%%%%%%%%%%%%%%%%%%%%%%%%%%%%%%%%%%
\begin{remark}
An operator $T\in\mathcal{B}(X)$ satisfies the criterion of diskcyclicity if and only if
$\Gamma=\{T^n \mbox{ : }n\geq0\} $
satisfies the criterion of diskcyclicity.
\end{remark}
%%%%%%%%%%%%%%%%%%%%%%%%%%%%%%%%%%%%%%%%%%%%%%%%%%%%%%%%%%%%%%%%%%%%%%%%%%%%%%%%%%%%%%%%%%%%%%%%%%%%%%%%%%
\begin{theorem}\label{11}
Let $X$ be a second countable Baire complex topological vector space and $\Gamma$ a subset of $\mathcal{B}(X)$. If $\Gamma$ satisfies the criterion of diskcyclicity, then $\mathbb{D}C(\Gamma)$ is a dense subset of $X$. As consequence; $\Gamma$ is diskcyclic.
\end{theorem}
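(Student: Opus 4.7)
The plan is to reduce to disk transitivity via Theorem \ref{t1} and then construct, from the criterion, the vectors needed to witness transitivity between any two prescribed open sets.

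Since $X$ is a second countable Baire complex topological vector space, Theorem \ref{t1} tells me that if I can show $\Gamma$ is disk transitive, then $\mathbb{D}C(\Gamma)$ is automatically a dense $G_\delta$ subset of $X$, which settles both assertions. So the task reduces to: given arbitrary nonempty open sets $U, V \subset X$, produce $\alpha \in \mathbb{D}\setminus\{0\}$ and $T \in \Gamma$ with $T(\alpha U) \cap V \neq \emptyset$.

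The standard construction is as follows. Using density of $X_0$ pick $x \in X_0 \cap U$, and using density of $Y_0$ pick $y \in Y_0 \cap V$. For each $k$, form the perturbed vector
\[
z_k := x + \alpha_k^{-1} S_k y.
\]
By condition $(ii)$, $\alpha_k^{-1} S_k y \to 0$ in $X$, and since translation is continuous, $z_k \to x$. Thus $z_k \in U$ for all sufficiently large $k$. Now apply $\alpha_k T_k$ and use linearity to split:
\[
\alpha_k T_k z_k \;=\; \alpha_k T_k x \;+\; T_k S_k y.
\]
By condition $(i)$ the first term tends to $0$, and by condition $(iii)$ the second term tends to $y$; since addition is continuous in a topological vector space, $\alpha_k T_k z_k \to y \in V$, so $\alpha_k T_k z_k \in V$ for all large $k$. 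Pick any such $k$: then $z_k \in U$ and $T_k(\alpha_k z_k) = \alpha_k T_k z_k \in V$, so $T_k(\alpha_k U) \cap V \neq \emptyset$ with $\alpha_k \in \mathbb{D}\setminus\{0\}$ and $T_k \in \Gamma$. This is exactly disk transitivity.

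I do not expect any serious obstacle here; the proof mirrors the classical Kitai/Gethner--Shapiro style criterion and the key algebraic identity is the splitting $\alpha_k T_k(x + \alpha_k^{-1} S_k y) = \alpha_k T_k x + T_k S_k y$, in which the factors $\alpha_k$ and $\alpha_k^{-1}$ cancel to eliminate the modulus constraint. The only point requiring mild care is that $X$ is merely a topological vector space rather than a normed one, but the argument only uses continuity of addition and scalar multiplication, so working with a neighborhood basis of $0$ in place of $\varepsilon$-balls causes no difficulty. Finally, once disk transitivity is established, invoking Theorem \ref{t1} immediately gives that $\mathbb{D}C(\Gamma)$ is dense (indeed a dense $G_\delta$), and in particular $\Gamma$ is diskcyclic.
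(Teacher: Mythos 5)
Your proposal is correct and follows essentially the same route as the paper: both prove disk transitivity by perturbing $x\in X_0\cap U$ to $z_k=x+\alpha_k^{-1}S_k y$, using the cancellation $\alpha_k T_k z_k=\alpha_k T_k x+T_k S_k y\to y$, and then invoke Theorem \ref{t1} to conclude density of $\mathbb{D}C(\Gamma)$.
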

\begin{proof}
Assume that $\Gamma$ satisfies the diskcyclicity criterion. Let $U$ and $V$ be two nonempty open subsets of $X$. Since $X_0$ and $Y_0$ are dense in $X$, there exist $x_0$ and $y_0$ in $X$ such that
$$ x_0\in X_0\cap U\hspace{0.3cm}\mbox{ and }\hspace{0.3cm} y_0\in Y_0\cap V.$$
For all $k\geq1,$ let $z_k=x_0+\alpha_k^{-1} S_ky $. We have 
$\alpha_k^{-1} S_k y\longrightarrow 0$, which implies that $z_k\longrightarrow x_0$. Since $x_0\in U$
and $U$ is open, there exists $N_1\in\mathbb{N}$ such that $z_k\in U$, for all $k\geq N_1$. On the
other hand, we have $\alpha_k T_k z_k=\alpha_k T_k x_0+T_k (S_k y_0)\longrightarrow y_0$. Since $y_0\in V$
and $V$ is open, there exists $N_2\in\mathbb{N}$ such that $\alpha_k T_k z_k\in V$,
for all $k\geq N_2$. Let $N=$max$\{N_1,N_2\}$, then  $z_k\in U$ and $\alpha_k T_k z_k\in V$,
for all $k\geq N$, that is 
$$\alpha_k T_k (U)\cap V\neq \emptyset,$$
for all $k\geq N$.
Hence, $\Gamma$ is disk transitive. By Theorem \ref{t1} we deduce that $\mathbb{D}C(\Gamma)$ is a dense subset of $X$. We use again Theorem \ref{t1} to conclude that $\Gamma$ is a diskcyclic set. 
\end{proof}
%%%%%%%%%%%%%%%%%%%%%%%%%%%%%%%%%%%%%%%%%%%%%%%%%%%%%%%%%%%%%%%%%%%%%%%%%%%%%%%%%%%%%%%%%%%%%%%%%%%%%%%%%%

%%%%%%%%%%%%%%%%%%%%%%%%%%%%%%%%%%%%%%%%%%%%%%%%%%%%%%%%%%%%%%%%%%%%%%%%%%%%%%%%%%%%%%%%%%%%%%%%%%%%%%%%%%
\section{Diskcyclic strongly continuous semigroups of operators}
In this section we will study the case when $\Gamma$ stands for a strongly continuous semigroup of operators.

Recall that a one-parameter family $(T_t)_{t\geq0}$ of operators on a complex topological vector space $X$ is called a strongly continuous semigroup of operators if the following three conditions are satisfied$:$
\begin{itemize}
\item[$(i)$] $T_0=I$ the identity operator on $X$;
\item[$(ii)$] $T_{t+s}=T_{t}T_{s}$ for all $t,$ $s\geq0$;
\item[$(iii)$] $\lim_{t\rightarrow s}T_{t}x=T_{s}x$ for all $x\in X$ and $t\geq 0$.
\end{itemize}
One also refers to it as a $C_0$-semigroup.

The linear operator defined in 
$$ D(A)=  \left\lbrace x\in X\mbox{ : }\lim_{t\downarrow0}\frac{T_tx-x}{t}\mbox{ exists }\right\rbrace   $$
by
$$ Ax=\lim_{t\downarrow0}\frac{T_tx-x}{t}=\frac{d^{+}T_tx}{dt}\vert_{t=0}\mbox{, for }x\in D(A) $$
is the infinitesimal generator of the strongly continuous semigroup $(T_t)_{t\geq0}$ and $D(A)$ is the domain of $A$.
%%%%%%%%%%%%%%%%%%%%%%%%%%%%%%%%%%%%%%%%%%%%%%%%%%%%%%%%%%%%%%%%%%%%%%%%%%%%%%%%%%%%%%%%%%%%%%%%%%%%%%%%%%
For more informations about the theory of strongly continuous semigroups the reader may refer to \cite{Pazy}.
%%%%%%%%%%%%%%%%%%%%%%%%%%%%%%%%%%%%%%%%%%%%%%%%%%%%%%%%%%%%%%%%%%%%%%%%%%%%%%%%%%%%%%%%%%%%%%%%%%%%%%%%%%

The next example shows that there is a diskcyclic strongly continuous semigroups of operators on the complex field.
\begin{example}\label{ex}
Let $X=\mathbb{C}$. For all $t\geq0$, let $T_t$ be an operator defined by 
$$\begin{array}{ccccc}
T_t & : & \mathbb{C} & \longrightarrow & \mathbb{C} \\
 & & x & \longmapsto & \exp(t)x. \\
\end{array}$$
Then $(T_t)_{t\geq0}$ is strongly continuous semigroups of operators and we have
$$ \mathbb{D}Orb((T_t)_{t\geq0},1)=\{\alpha T_t(1)\mbox{ : }t\geq0\mbox{, }\alpha\in\mathbb{D}\}=\{\alpha y \mbox{ : }y\in\left[1,+\infty\right[  \mbox{, }\alpha\in\mathbb{D}\}.$$
Hence,
$$\overline{\mathbb{D}Orb((T_t)_{t\geq0},1)}=\mathbb{C}.$$
Thus, $(T_t)_{t\geq0}$ is a diskcyclic strongly continuous semigroups of operators and $1$ is a diskcyclic   vector of $((T_t)_{t\geq0})$.
\end{example}
%%%%%%%%%%%%%%%%%%%%%%%%%%%%%%%%%%%%%%%%%%%%%%%%%%%%%%%%%%%%%%%%%%%%%%%%%%%%%%%%%%%%%%%%%%%%%%%%%%%%%%%%%%
\begin{remark}
Since all complex topological vector spaces of dimension one are isomorphe, we can deduce, by Using Example \ref{ex}, that there exists a diskcyclic strongly continuous semigroups of operators on each one dimensional space. 
\end{remark}
%%%%%%%%%%%%%%%%%%%%%%%%%%%%%%%%%%%%%%%%%%%%%%%%%%%%%%%%%%%%%%%%%%%%%%%%%%%%%%%%%%%%%%%%%%%%%%%%%%%%%%%%%%

Recall from \cite[Lemma 5.1]{Wengenroth}, that if $X$ is a complex topological vector space such that $2 \leq$ dim$(X) <\infty$. Then $X$ supports no supercyclic strongly continuous semigroups of operators.
%%%%%%%%%%%%%%%%%%%%%%%%%%%%%%%%%%%%%%%%%%%%%%%%%%%%%%%%%%%%%%%%%%%%%%%%%%%%%%%%%%%%%%%%%%%%%%%%%%%%%%%%%%

In the following theorem, we prove that the same results holds in the case of diskcyclicity.
%%%%%%%%%%%%%%%%%%%%%%%%%%%%%%%%%%%%%%%%%%%%%%%%%%%%%%%%%%%%%%%%%%%%%%%%%%%%%%%%%%%%%%%%%%%%%%%%%%%%%%%%%%
\begin{theorem}
Assume that that $2 \leq$ dim$(X) <\infty$. Then $X$ supports no diskcyclic strongly continuous semigroups.
\end{theorem}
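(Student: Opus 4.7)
The plan is to deduce the theorem immediately from the supercyclicity result of Wengenroth \cite[Lemma 5.1]{Wengenroth} that has just been recalled. The key observation is that every diskcyclic $C_0$-semigroup on $X$ is automatically supercyclic on $X$, simply because $\mathbb{D}\subset\mathbb{C}$.

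More precisely, I would argue by contradiction: assume that $(T_t)_{t\geq 0}$ is a diskcyclic $C_0$-semigroup on $X$ and let $x\in X$ be a diskcyclic vector. By definition,
$$\mathbb{D}Orb((T_t)_{t\geq 0},x)=\{\alpha T_tx \mbox{ : }\alpha\in\mathbb{D}\mbox{, }t\geq 0\}$$
is a dense subset of $X$. Since $\mathbb{D}\subset\mathbb{C}$, this set is contained in the projective orbit
$$\mathbb{C}Orb((T_t)_{t\geq 0},x)=\{\lambda T_tx \mbox{ : }\lambda\in\mathbb{C}\mbox{, }t\geq 0\},$$
so the projective orbit is dense in $X$ as well. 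Thus $(T_t)_{t\geq 0}$ is a supercyclic $C_0$-semigroup on $X$ having $x$ as a supercyclic vector.

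However, the recalled Lemma~5.1 of \cite{Wengenroth} asserts that whenever $2\leq\dim(X)<\infty$, the space $X$ supports no supercyclic $C_0$-semigroup. This contradiction completes the argument.

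There is essentially no technical obstacle: the whole content of the proof is the set-theoretic inclusion $\mathbb{D}\subset\mathbb{C}$ together with the elementary fact that any superset of a dense subset is itself dense. The substantive work has already been done in the supercyclic setting, and the diskcyclic version is a direct corollary. One could also mimic Wengenroth's argument from scratch by writing $T_t=\exp(tA)$ for some $A\in\mathcal{B}(X)$ and analysing the disk orbit via the Jordan form of $A$, but this longer route would duplicate work and seems unnecessary given the availability of the cited lemma.
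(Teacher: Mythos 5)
Your proposal is correct and follows exactly the paper's own argument: the inclusion $\mathbb{D}Orb((T_t)_{t\geq 0},x)\subset\mathbb{C}Orb((T_t)_{t\geq 0},x)$ shows that a diskcyclic $C_0$-semigroup is supercyclic, and then Wengenroth's Lemma~5.1 gives the contradiction. The paper states this in one line; your write-up just spells out the same two steps.
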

%%%%%%%%%%%%%%%%%%%%%%%%%%%%%%%%%%%%%%%%%%%%%%%%%%%%%%%%%%%%%%%%%%%%%%%%%%%%%%%%%%%%%%%%%%%%%%%%%%%%%%%%%%
\begin{proof}
We use \cite[Lemma 5.1]{Wengenroth} and the fact that 
$\mathbb{D}Orb(T,x)\subset \mathbb{C}Orb(T,x)$ for all $x\in X$.
\end{proof}
%%%%%%%%%%%%%%%%%%%%%%%%%%%%%%%%%%%%%%%%%%%%%%%%%%%%%%%%%%%%%%%%%%%%%%%%%%%%%%%%%%%%%%%%%%%%%%%%%%%%%%%%%%

A necessary and sufficient condition for a strongly continuous semigroups of operators to be diskcyclic is  due to next lemma and theorem. For the hypercyclicity (resp, the supercyclicity) version see \cite[Theorem 2.2.]{DSW} (resp, \cite[Lemma 1]{MT} and \cite[Lemma 2]{MT}).
%%%%%%%%%%%%%%%%%%%%%%%%%%%%%%%%%%%%%%%%%%%%%%%%%%%%%%%%%%%%%%%%%%%%%%%%%%%%%%%%%%%%%%%%%%%%%%%%%%%%%%%%%%
\begin{lemma}\label{lm1}
Let $(T_t )_{t\geq0}$ be a diskcyclic strongly continuous semigroups of operators on a complex Banach infinite dimensional space $X$. If $x \in X$ is a diskcyclic vector of $(T_t )_{t\geq0}$, then the following assertions hold:
\begin{itemize}
\item[$(1)$] $T_t x\neq 0$ for all $t\geq 0$;
\item[$(2)$] The set $\{\alpha T_t x \mbox{ : }t\geq s\mbox{, }\alpha\in\mathbb{D}\}$ is dense in $X$ for all $s\geq 0$.
\end{itemize}
\end{lemma}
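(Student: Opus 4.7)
The plan rests on one geometric fact: in an infinite dimensional Banach space $X$, every compact subset has empty interior, so in particular no bounded set can be dense. I will apply this twice, to continuous images of compact parameter sets produced by strong continuity of the semigroup.

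For assertion $(1)$, I argue by contradiction. Suppose $T_{t_0}x = 0$ for some $t_0 \geq 0$. If $t_0 = 0$, then $x = T_0 x = 0$, so $\mathbb{D}Orb((T_t)_{t\geq 0},x) = \{0\}$, which cannot be dense in $X$. If $t_0 > 0$, the semigroup law yields $T_t x = T_{t-t_0} T_{t_0} x = 0$ for every $t \geq t_0$, so
$$\mathbb{D}Orb((T_t)_{t\geq 0}, x) \subseteq \{\alpha T_t x \mbox{ : } 0 \leq t \leq t_0,\ \alpha \in \overline{\mathbb{D}}\}.$$
The right hand side is the continuous image of the compact set $[0, t_0] \times \overline{\mathbb{D}}$ under $(t, \alpha) \longmapsto \alpha T_t x$, hence compact, hence not dense in $X$, contradicting the diskcyclicity of $x$.

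For assertion $(2)$, I fix $s \geq 0$ and split the disk orbit along $t = s$ by setting
$$A = \{\alpha T_t x \mbox{ : } 0 \leq t \leq s,\ \alpha \in \overline{\mathbb{D}}\}, \qquad B = \{\alpha T_t x \mbox{ : } t \geq s,\ \alpha \in \mathbb{D}\}.$$
By the same continuity argument $A$ is compact (and therefore closed), while $\mathbb{D}Orb((T_t)_{t\geq 0},x) \subseteq A \cup B$. Suppose for contradiction that $B$ is not dense in $X$. Then $U := X \setminus \overline{B}$ is a nonempty open set. From $X = \overline{A \cup B} = \overline{A} \cup \overline{B} = A \cup \overline{B}$ we deduce $U \subseteq A$, so $A$ would have nonempty interior, contradicting the empty interior property of compact subsets of the infinite dimensional Banach space $X$.

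I do not foresee a serious obstacle in carrying this out: the only technical points are verifying compactness of the pre-$s$ chunk $A$ (which comes from strong continuity together with compactness of $[0,s] \times \overline{\mathbb{D}}$) and invoking the infinite-dimensionality hypothesis at precisely the step where a compact set is forced to have empty interior. The infinite dimension of $X$ is used in an essential way and is exactly what makes the lemma genuinely a statement about $C_0$-semigroups on large spaces.
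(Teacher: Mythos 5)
Your proof is correct. For assertion $(2)$ it is essentially the paper's own argument: both decompose the orbit along $t=s$, note that the pre-$s$ piece lies in a compact set (continuous image of $[0,s]\times\overline{\mathbb{D}}$ under $(t,\alpha)\mapsto\alpha T_tx$, by strong continuity), and conclude via the fact that a compact subset of an infinite-dimensional Banach space has empty interior. For assertion $(1)$, however, you take a genuinely different and shorter route. The paper first passes to the \emph{minimal} $t_0$ with $T_{t_0}x=0$, shows by a subsequence argument that every $y\in X$ can be written as $\alpha T_tx$ with $t\in[0,t_0]$, and then reaches a contradiction through a computation with three pairwise linearly independent orbit vectors spanning a two-dimensional subspace. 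Your observation that $T_{t_0}x=0$ already traps the whole disk orbit inside the single compact set $\{\alpha T_tx \mbox{ : } 0\leq t\leq t_0,\ \alpha\in\overline{\mathbb{D}}\}$ short-circuits all of this: the same empty-interior principle used in $(2)$ finishes $(1)$ at once, with no need for minimality of $t_0$ (whose existence the paper does not even justify) and no auxiliary linear algebra. Your version thus unifies the two parts under one geometric fact and is, if anything, cleaner than the published argument; the paper's longer detour buys nothing additional here.
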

%%%%%%%%%%%%%%%%%%%%%%%%%%%%%%%%%%%%%%%%%%%%%%%%%%%%%%%%%%%%%%%%%%%%%%%%%%%%%%%%%%%%%%%%%%%%%%%%%%%%%%%%%
\begin{proof}
$(1)$ Suppose that $t_0>0$ is minimal with the property that $T_{t_0}x=0$.
First we show that for all $y \in X$,
there exists some $t\in[0,t_0]$ and $\alpha\in\mathbb{D}$ such that $y = \alpha T_tx$.
Since $x$ is a diskcyclic $\Gamma$, there exist a sequence $(t_n)_{n\in\mathbb{N}}$ of $[0,t_0]$ and a sequence $(\alpha_n)_{n\in\mathbb{N}}$ of $\mathbb{D}$ such that 
$$\alpha_n T_{t_n}x\longrightarrow y.$$
Since $[0,t_0]$ is compact we may suppose that $(t_n)_{n\in\mathbb{N}}$ converges to some $t$.
Without loss of generality we may assume that $(\alpha_n)_{n\in\mathbb{N}}$ converges to some $\alpha$ and we infer that $y=\alpha T_t x$.

Now let $y_i=\alpha_i T_{t_i}x\in X$, spanning a two-dimensional subspace,
such that each pair $y_i$, $y_j$, $i \neq j$, is linearly independent. Assume that $t_1 > t_2 > t_3$.
We have then $y_3 = c_1 y_1 + c_2 y_2$. We have
\begin{align*}
0&\neq \alpha_3 T_{(t_0+t_3-t_2)}x=T_{(t_0-t_2)}y_3=c_1T_{(t_0-t_2)}y_1+c_2T_{(t_0-t_2)}y_2\\
 &=c_1 \alpha_1 T_{(t_0+t_1-t_2)}x+c_2\alpha_2 T_{t_0}x=0.
\end{align*}
which is a contradiction.\\
$(2)$ Suppose that there exists some $s_0>0$ such that the set
$$A:=\{\alpha T_t x \mbox{ : }t\geq s_0\mbox{, }\alpha\in\mathbb{D}\}$$ 
is not dense in X. Hence there exists a bounded open set $U$ such that $U \cap \overline{A}= \emptyset$.
Therefore we have
$$ U\subset \overline{\{\alpha T_t x \mbox{ : }0\leq t\leq s_0\mbox{, }\alpha\in\mathbb{D}\}} $$
by using the relation 
$$ X=\overline{\{\alpha T_t x \mbox{ : }t\geq 0\mbox{, }\alpha\in\mathbb{D}\}}=\overline{\{\alpha T_t x \mbox{ : }t\geq s_0\mbox{, }\alpha\in\mathbb{D}\}}\cup \overline{\{\alpha T_t x \mbox{ : }0\leq t\leq s_0\mbox{, }\alpha\in\mathbb{D}\}}. $$
Thus, $\overline{U}$ is compact. Hence $X$ is finite dimensional, which contradicts that $X$ is infinite dimensional.
\end{proof}
\begin{theorem}\label{t6}
Let $(T_t)_{t\geq0}$ be a strongly continuous semigroup of operators on a complex separable Banach infinite dimensional space X. Then the following assertions are equivalent:
\begin{itemize}
\item[$(1)$]$(T_t)_{t\geq0}$ is diskcyclic;
\item[$(2)$]for all $y$, $z \in X$ and all $\varepsilon > 0$, there exist $v \in X$, $t>0$ and $\alpha \in\mathbb{D}$ such that
$$\Vert y-v \Vert<\varepsilon \hspace{0.3cm}\mbox{ and }\hspace{0.3cm} \Vert z-\alpha T_tv \Vert<\varepsilon;$$
\item[$(3)$]for all $y$, $z \in X$, all $\varepsilon > 0$ and for all $l\geq 0$,  there exist $v \in X$, $t>l$ and $\alpha \in\mathbb{D}$ such that
$$\Vert y-v \Vert<\varepsilon\hspace{0.3cm}\mbox{ and }\hspace{0.3cm}\Vert z-\alpha T_tv \Vert<\varepsilon.$$
\end{itemize}
\end{theorem}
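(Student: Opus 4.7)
The plan is to prove the circular implications $(1) \Rightarrow (3) \Rightarrow (2) \Rightarrow (1)$. The direction $(3) \Rightarrow (2)$ is just the specialization $l = 0$. For $(2) \Rightarrow (1)$: condition (2) is exactly the characterization of disk transitivity of $\Gamma = \{T_t : t \geq 0\}$ given by Theorem \ref{tt}(iii) (given a neighborhood $W$ of $0$, choose $\varepsilon > 0$ with $B(0,\varepsilon) \subset W$ and apply (2)), so $\Gamma$ is disk transitive. Since a separable Banach space is a second countable Baire space, Theorem \ref{t1} then yields that $\Gamma$ is diskcyclic, which is (1).

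The main direction is $(1) \Rightarrow (3)$. Let $x$ be a diskcyclic vector for $(T_t)_{t \geq 0}$ and fix $y, z \in X$, $\varepsilon > 0$, and $l \geq 0$. Since $\mathbb{D}Orb(\Gamma, x)$ is dense in $X$, I first choose $\alpha_0 \in \mathbb{D} \setminus \{0\}$ and $t_0 \geq 0$ such that $\|\alpha_0 T_{t_0} x - y\| < \varepsilon$; the constraint $\alpha_0 \neq 0$ holds automatically for a fine enough approximation when $y \neq 0$, and when $y = 0$ one simply takes $t_0 = 0$ and $\alpha_0$ small but nonzero. Set $v := \alpha_0 T_{t_0} x$, so $\|v - y\| < \varepsilon$.

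The key computation uses the semigroup relation $T_t T_{t_0} = T_{t+t_0}$ to rewrite
$$\{\alpha T_t v : t \geq l+1,\ \alpha \in \mathbb{D}\} \;=\; \alpha_0 \cdot \{\gamma T_u x : u \geq l+1+t_0,\ \gamma \in \mathbb{D}\}.$$
Applying Lemma \ref{lm1}(2) to the diskcyclic vector $x$ with threshold $l+1+t_0$, the right-hand set is dense in $X$, and since multiplication by the nonzero scalar $\alpha_0$ is a homeomorphism of $X$, the left-hand side is dense too. I can therefore pick $t \geq l+1 > l$ and $\alpha \in \mathbb{D}$ with $\|\alpha T_t v - z\| < \varepsilon$, completing (3).

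The principal obstacle is that Theorem \ref{so} does not apply to a one-parameter semigroup: the required factorization $T = AS$ demands $t \geq s$, which fails in one of the two orderings, so the upgrade from diskcyclicity to the time-shifted transitivity in (3) cannot be obtained from the abstract machinery of Section 3 alone. The argument above circumvents this by transferring the diskcyclicity of $x$ to a nearby vector $v$ via the semigroup identity, thereby reducing the problem to a single application of Lemma \ref{lm1}(2) to $x$, which is where the hypothesis that $X$ is infinite-dimensional is actually used.
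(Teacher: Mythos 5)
Your proof is correct, and it splits into one direction that mirrors the paper and one that genuinely diverges. For $(1)\Rightarrow(3)$ you do essentially what the paper does: approximate $y$ by a point $v=\alpha_0 T_{t_0}x$ of the disk orbit of a diskcyclic vector $x$, then use the semigroup identity $T_tT_{t_0}=T_{t+t_0}$ together with Lemma \ref{lm1}(2) to approximate $z$ by $\alpha T_t v$ with $t$ beyond any prescribed threshold; your extra care about $\alpha_0\neq 0$ (so that multiplication by $\alpha_0$ is a homeomorphism) is a point the paper passes over silently, and is welcome. The real divergence is in $(2)\Rightarrow(1)$: the paper gives a self-contained constructive argument in the style of the classical Birkhoff transitivity proof, building a Cauchy sequence $(y_n)$ from a countable dense set $\{z_n\}$ with the control $\Vert y_n-y_{n-1}\Vert\leq 2^{-n}/\sup_{j<n}\Vert T_{t_j}\Vert$ and exhibiting the limit $x$ explicitly as a diskcyclic vector, whereas you recognize condition $(2)$ as the transitivity criterion of Theorem \ref{tt}(iii) and invoke the Baire-category result Theorem \ref{t1} (legitimate, since a separable Banach space is second countable and Baire). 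Your route is shorter and reuses the paper's own machinery, and it yields the stronger conclusion that $\mathbb{D}C(\Gamma)$ is a dense $G_\delta$; the paper's route is constructive and does not pass through the abstract transitivity theorems. The only blemish, shared with the paper itself, is the usual looseness about whether the scalar $\alpha$ may be $0$ when translating between condition $(2)$ and the definition of disk transitivity (which demands $\alpha\in\mathbb{D}\setminus\{0\}$); this is repaired by testing against targets $z$ with $\Vert z\Vert>\varepsilon$, which forces $\alpha\neq 0$.
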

\begin{proof}
$(1)\Rightarrow (3)$: Assume that $(T_t)_{t\geq0}$ is diskcyclic and let $x \in X$ be a diskcyclic vector for $(T_t)_{t\geq0}$. Then 
$$\mathbb{D}Orb((T_t)_{t\geq0},x)=\{ \alpha T_tx \mbox{ : }t\geq0 \mbox{, }\alpha\in\mathbb{D} \}$$
 is dense in $X$. Let $\varepsilon>0$. For any $y\in X$, there exist $s_1>0$ and $\alpha_1\in \mathbb{D}$ such that $\Vert y-\alpha_1 T_{s_1}x\Vert<\varepsilon $. If $l\geq0$, then by Lemma \ref{lm1}, the set  
$$\alpha_1 \{ \alpha T_tx \mbox{ : }t\geq s+l \mbox{, }\alpha\in\mathbb{D} \}:=\{\alpha_1 \alpha T_tx \mbox{ : }t\geq s+l \mbox{, }\alpha\in\mathbb{D} \}$$
 is a dense subset $X$. For any $z\in X$, there exist $s_2>l+s_1$ and $\alpha_2\in\mathbb{D}$ such that $\Vert z-\alpha_1\alpha_2 T_{s_2}x \Vert<\varepsilon$. Put $v=\alpha_1 T_{s_1}x$, $t=s_2-s_1>l$ and $\alpha=\alpha_2$. Then we have
$\Vert y-v \Vert<\varepsilon$ and $\Vert z-\alpha T_tv \Vert<\varepsilon.$\\
$(3)\Rightarrow (2)$: It is obvious.\\
$(2)\Rightarrow (1)$: Since $X$ is separable, we can consider $\{z_1,z_2,z_3,...\}$ a dense sequence in $X$. Using this sequence, we construct sequences $\{y_1,y_2,y_3,...\}$ of $X$, $\{t_1,t_2,t_3,...\}$ of $[0,+\infty)$ and $\{\alpha_1,\alpha_2,\alpha_3,...\}$ of $ \mathbb{D}$ inductively:
\begin{itemize}
\item Put $y_1=z_1$, $t_1=0$;
\item For $n>1$, find $y_n$, $t_n$ and $\alpha_n$ such that
\begin{equation}\label{e1}
\Vert y_n- y_{n-1} \Vert\leq \frac{2^{-n}}{\sup\{ \Vert T_{t_j} \Vert \mbox{ : } j<n \}},
\end{equation}
and 
\begin{equation}\label{e2}
\Vert z_n-\alpha_n T_{t_n}y_n \Vert\leq \varepsilon.
\end{equation}
\end{itemize}
In particular, (\ref{e1}) implies that $\Vert y_n- y_{n-1} \Vert\leq 2^{-n}$, so that the sequence $(y_n)_{n\geq1}$ has a limit $x$. Applying (\ref{e2}) and once again (\ref{e1}) we infer that
\begin{align*}
\Vert z_n-\alpha_nT_{t_n}x \Vert&=\Vert z_n-\alpha_nT_{t_n}y_n+\alpha_nT_{t_n}y_n-\alpha_nT_{t_n}x  \Vert\\
                        &\leq \Vert z_n-\alpha_nT_{t_n}y_n\Vert+\Vert \alpha_nT_{t_n}(y_n-x)  \Vert\\                                              
                        &\leq \Vert z_n-\alpha_nT_{t_n}y_n\Vert+\Vert \alpha_nT_{t_n}\Vert \left\| \sum_{i=n+1}^{+\infty} \Vert  y_i-y_{i-1}  \right\|\\
                           &\leq 2^{-n}+\sum_{i=n+1}^{+\infty}2^{-i}=2^{-n+1}.
\end{align*}
Let $z\in X$ and $\varepsilon>0$, there exists $n$, large enough, such that 
$$\Vert z_n-z \Vert<\frac{\varepsilon}{2}.$$
 Choosing $n$ large enough such that $2^{-n+1}<\frac{\varepsilon}{2}$, we obtain
$$\Vert \alpha_n T_{t_n}x-z \Vert\leq \Vert z-z_n \Vert+\Vert z_n -\alpha_n T_{t_n}x\Vert<\varepsilon.$$
Therefore, $\mathbb{D}Orb((T_t)_{t\geq0},x)=\{ \alpha T_tx \mbox{ : }t\geq0 \mbox{, }\alpha\in\mathbb{D} \}$ is dense in $X$. This means that is diskcyclic and $x$ is a diskcyclic vector for $(T_t)_{t\geq0}$.
\end{proof}
As a corollary we obtain a sufficient condition of diskcyclicity of a strongly continuous semigroup of operators.

Let $X$ be a separable Banach infinite dimensional space. Denote $ X_0$ the set of all $x\in X $ such that $\lim_{t\longrightarrow \infty }T_t x=0, $
and $ X_\infty$ the set of all $ x \in X$ such that for each  $\varepsilon > 0$ there exist some $w \in X,$  $\alpha\in\mathbb{D}$ and some $ t > 0$ with $ \Vert w \Vert < \varepsilon$ and $ \Vert\alpha T_t w -x \Vert < \varepsilon.$
\begin{theorem}
Let $(T_t)_{t\geq0}$ be a strongly continuous semigroup of operators on a complex separable Banach infinite dimensional space $X$. If both $X_\infty$ and $X_0$ are dense subsets, then $(T_t)_{t\geq0}$ is diskcyclic.
\end{theorem}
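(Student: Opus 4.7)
The plan is to verify condition $(2)$ of Theorem \ref{t6}. Given $y,\,z\in X$ and $\varepsilon>0$, I need to produce $v\in X$, $\alpha\in\mathbb{D}$ and $t>0$ with $\|y-v\|<\varepsilon$ and $\|z-\alpha T_t v\|<\varepsilon$. The strategy will be to write $v=y_0+w$, where $y_0\in X_0$ lies within $\varepsilon/3$ of $y$ and $w$ is a small vector for which $\alpha T_t w$ approximates some $z_0\in X_\infty$ chosen within $\varepsilon/3$ of $z$. With this split, $\alpha T_t v=\alpha T_t y_0+\alpha T_t w$, and the only remaining ingredient will be to force $\alpha T_t y_0$ to be negligible by taking $t$ large, using that $y_0\in X_0$.

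Concretely, I will first invoke the density of $X_0$ and $X_\infty$ to pick $y_0$ and $z_0$ as above, perturbing if necessary so that $z_0\neq 0$ (possible since $X_\infty$ is dense in the infinite dimensional space $X$). Since $y_0\in X_0$, I can then pick $N>0$ with $\|T_s y_0\|<\varepsilon/3$ for all $s\geq N$. Applying the definition of $X_\infty$ at $z_0$ with a parameter $\delta>0$ to be chosen furnishes $w\in X$, $\alpha\in\mathbb{D}$ and $t>0$ such that $\|w\|<\delta$ and $\|\alpha T_t w-z_0\|<\delta$. Setting $v=y_0+w$, the triangle inequality gives $\|y-v\|\leq \|y-y_0\|+\|w\|<\varepsilon/3+\delta$ and
\[
\|z-\alpha T_t v\|\leq\|z-z_0\|+\|\alpha T_t w-z_0\|+|\alpha|\,\|T_t y_0\|<\varepsilon/3+\delta+\varepsilon/3,
\]
whenever $t\geq N$. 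Both bounds fall below $\varepsilon$ as soon as $\delta<\varepsilon/3$, so the whole argument reduces to ensuring $t\geq N$.

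The main obstacle is precisely this last point: the definition of $X_\infty$ asserts only that \emph{some} $t>0$ exists, with no a priori lower bound. To force $t\geq N$, I will exploit that $(T_s)_{s\geq 0}$ being a $C_0$-semigroup on a Banach space enjoys the standard exponential growth bound $\|T_s\|\leq Me^{\omega s}$, so $M_N:=\sup_{0\leq s\leq N}\|T_s\|$ is finite. From $\|\alpha T_t w\|\geq\|z_0\|-\delta$ combined with $\|\alpha T_t w\|\leq\|T_t\|\,\|w\|<\|T_t\|\,\delta$ I obtain $\|T_t\|>(\|z_0\|-\delta)/\delta$; hence choosing
\[
\delta<\min\{\varepsilon/3,\ \|z_0\|/(M_N+1)\}
\]
forces $\|T_t\|>M_N$ and therefore $t>N$. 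With this choice of $\delta$ the estimates above close condition $(2)$ of Theorem \ref{t6}, and we conclude that $(T_t)_{t\geq 0}$ is diskcyclic.
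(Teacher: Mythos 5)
Your proposal is correct and follows essentially the same route as the paper: both verify condition $(2)$ of Theorem \ref{t6} by decomposing $v$ as a sum of an element of $X_0$ (to approximate $y$) and the small vector $w$ furnished by the definition of $X_\infty$ (so that $\alpha T_t w$ approximates $z$), with $T_t$ killing the $X_0$-part for large $t$. The one substantive difference is that the paper simply asserts that the $t$ in the definition of $X_\infty$ can be taken ``arbitrarily large,'' which the definition as stated does not guarantee; your argument closes this gap cleanly, deducing $\|T_t\|>(\|z_0\|-\delta)/\delta$ from $\|w\|<\delta$ and $\|\alpha T_t w-z_0\|<\delta$ with $z_0\neq 0$, and then forcing $t>N$ via the local boundedness $M_N=\sup_{0\leq s\leq N}\|T_s\|<\infty$ of a $C_0$-semigroup. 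Your version is also more careful in reducing from arbitrary $y,z\in X$ to elements of the dense sets $X_0$ and $X_\infty$ via a three-$\varepsilon$ estimate, a step the paper leaves implicit. In short: same strategy, but your write-up is the more complete one.
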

\begin{proof}
Let $z \in X_\infty$ and $y \in X_0$. Then for each $\varepsilon > 0$ there are
arbitrarily large $t > 0$, $\alpha\in\mathbb{D}$ and $w \in X$ such that
$$ \Vert w \Vert < \varepsilon \hspace*{0.3cm} \mbox{ and } \hspace*{0.3cm} \Vert\alpha T_t w -x \Vert < \frac{\varepsilon}{2}.$$
Since $y \in X_0$, for sufficiently large $t$ we have $\Vert T_{t}y \Vert < \frac{\varepsilon}{2}$. We put $v=y+w$ and infer
$$ \Vert z-\alpha T_t v \Vert\leq \Vert z-\alpha T_t w \Vert+\Vert\alpha T_t y \Vert<\varepsilon, $$
and 
$$ \Vert y-v \Vert=\Vert w \Vert<\varepsilon. $$
\end{proof}
%%%%%%%%%%%%%%%%%%%%%%%%%%%%%%%%%%%%%%%%%%%%%%%%%%%%%%%%%%%%%%%%%%%%%%%%%%%%%%%%%%%%%%%%%%%%%%%%%%%%%%%%%%%%%%%%%%%%%%%%%%%%%%%%%%%%%%%%%%%%%%%%%%%%%%%%%%%%%%%%%%%%%%%%%%%%%%%%%%%%%%%%%%%%%%%%%%%%%%%%%%%%%%%%%%%%
By using Theorem \ref{so}, we may prove that diskcyclicity and disk transitivity are equivalent in the case of strongly continuous semigroup of operators.

%%%%%%%%%%%%%%%%%%%%%%%%%%%%%%%%%%%%%%%%%%%%%%%%%%%%%%%%%%%%%%%%%%%%%%%%%%%%%%%%%%%%%%%%%%%%%%%%%%%%%%%%%
\begin{theorem}\label{23}
Let $(T_t)_{t\geq0}$ be a strongly continuous semigroup of operators on a complex topological vector space $X$. Then, the following assertions are equivalent:
\begin{itemize}
\item[$(i)$] $(T_t)_{t\geq0}$ is diskcyclic set;
\item[$(ii)$] $(T_t)_{t\geq0}$ is disk transitive.
\end{itemize}
\end{theorem}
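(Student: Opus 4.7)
The plan is to handle the easy implication $(ii)\Rightarrow(i)$ by invoking Theorem \ref{t1}, and to prove the nontrivial direction $(i)\Rightarrow(ii)$ by a direct argument built on Proposition \ref{p1}. For $(ii)\Rightarrow(i)$, Theorem \ref{t1} shows that (in the appropriate ambient setting) a disk transitive set of operators has a dense $G_{\delta}$ of diskcyclic vectors, whence $(T_t)_{t\geq 0}$ is diskcyclic.

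For $(i)\Rightarrow(ii)$, fix a diskcyclic vector $x$ of $(T_t)_{t\geq 0}$ and nonempty open sets $U,V\subset X$. Using density of $\mathbb{D}\mathrm{Orb}((T_t)_{t\geq 0},x)$, I choose $\alpha_1\in\mathbb{D}\setminus\{0\}$ and $t_1\geq 0$ with $y:=\alpha_1 T_{t_1}x\in U$ (taking $\alpha_1\neq 0$ is always possible, by a small perturbation should $0\in U$). The crux is then to show that $y$ is itself a diskcyclic vector of $(T_t)_{t\geq 0}$. Granted this, the disk orbit of $y$ is dense in $X$ and therefore meets $V$: there exist $\alpha\in\mathbb{D}\setminus\{0\}$ and $t\geq 0$ with $\alpha T_t(y)\in V$, and since $y\in U$ this gives $\alpha T_t(U)\cap V\ni\alpha T_t(y)\neq\emptyset$, which is disk transitivity.

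To see that $y\in\mathbb{D}C((T_t)_{t\geq 0})$, I combine the earlier remark (nonzero scalar multiples of diskcyclic vectors are diskcyclic, which reduces matters to $T_{t_1}x\in\mathbb{D}C((T_t)_{t\geq 0})$) with Proposition \ref{p1}. The semigroup law $T_{t_1}T_t=T_{t+t_1}=T_tT_{t_1}$ places $T_{t_1}$ in the commutant of $(T_t)_{t\geq 0}$, so the only remaining hypothesis of Proposition \ref{p1} to verify is that $T_{t_1}$ has dense range. Since its range is a linear subspace containing $T_{t_1}(T_s x)=T_{s+t_1}x$ for every $s\geq 0$, dense range will follow as soon as I show that the ``tail'' disk orbit $\{\alpha T_t x:\alpha\in\mathbb{D},\ t\geq t_1\}$ is dense in $X$.

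The main obstacle is precisely this tail density, and I plan to argue it by contradiction. If the closure $F$ of the tail is a proper subset of $X$, then the ``head'' $H=\{\alpha T_t x:\alpha\in\overline{\mathbb{D}},\ 0\leq t\leq t_1\}$ is the continuous image of the compact set $\overline{\mathbb{D}}\times[0,t_1]$ under $(\alpha,t)\mapsto\alpha T_t x$ (using the $C_0$ property for continuity in $t$), hence compact. Density of the full disk orbit of $x$ then forces $X=F\cup H$, so $X\setminus F$ is a nonempty open set with compact closure, and a Riesz-type argument pushes $X$ into finite dimension. The earlier theorem rules out $2\leq\dim X<\infty$; the case $\dim X=1$ is handled directly (cf.\ Example \ref{ex}), producing the contradiction. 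Once this density step is in hand, the remainder of the argument is purely formal manipulation of the paper's earlier machinery.
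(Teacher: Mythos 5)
Your proof is correct in substance but takes a genuinely different route from the paper's. The paper disposes of $(i)\Rightarrow(ii)$ in one line by feeding the semigroup identity $T_{t_1}=T_{t_1-t_2}T_{t_2}$ (for $t_1>t_2$) into Theorem \ref{so}, which connects an orbit point in $U$ to an orbit point in $V$ by ``dividing'' one semigroup element by the other. You instead show that every nonzero point $\alpha_1T_{t_1}x$ of the disk orbit of a diskcyclic vector is itself diskcyclic, by combining Proposition \ref{p1} with a proof that $T_{t_1}$ has dense range; that density reduces to the tail-density statement of Lemma \ref{lm1}(2), which you re-derive via the compactness--Riesz argument and the nonexistence of diskcyclic $C_0$-semigroups when $2\le\dim X<\infty$. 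What your route buys: it sidesteps the tacit assumptions in the application of Theorem \ref{so} (namely that the semigroup parameter of the point landing in $U$ exceeds that of the point landing in $V$, and that the corresponding moduli satisfy $|\alpha|\le|\beta|$ --- neither is guaranteed by density alone), so your argument is arguably more robust. What it costs: you import the Hausdorff--Riesz machinery of Lemma \ref{lm1} (compactness of the ``head'', and ``nonempty open set with compact closure implies finite dimension''), which the paper only sets up for Banach spaces while Theorem \ref{23} is stated for arbitrary complex topological vector spaces; and your phrase that $\dim X=1$ ``produces the contradiction'' is slightly off --- diskcyclic semigroups do exist there (Example \ref{ex}), so in that case you must verify tail density directly, which indeed works since $T_t$ is multiplication by $e^{at}$ with $\mathrm{Re}\,a>0$. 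Finally, both you and the paper leave $(ii)\Rightarrow(i)$ resting on Theorem \ref{t1}, which needs $X$ second countable and Baire, a hypothesis not present in the statement of Theorem \ref{23}.
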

\begin{proof}
By remarking that if $t_1>t_2\geq0$, then there exists $t=t_1-t_2$ such that $T_{t_1}=T_t T_{t_2}$, and using Theorem \ref{so}.
\end{proof}
\begin{definition}\cite{Pazy}
Let $(T_t)_{t\geq0}$ be a strongly continuous semigroup of operators on a complex topological vector space $X$. Given another topological vector space $Y$ and an isomorphism $\phi$ from $Y$ onto $X$, we obtain a strongly continuous semigroup of operators $(S_t)_{t\geq0}$ on $Y$, called similar to $(T_t)_{t\geq0}$, by defining
$$ S_t=\phi^{-1} T_t \phi $$
for all $t\geq0$.
\end{definition}
%%%%%%%%%%%%%%%%%%%%%%%%%%%%%%%%%%%%%%%%%%%%%%%%%%%%%%%%%%%%%%%%%%%%%%%%%%%%%%%%%%%%%%%%%%%%%%%%%%%%%%%%%%
\begin{proposition}
Let $(T_t)_{t\geq0}$ be a diskcyclic strongly continuous semigroup of operators on a complex topological vector space $X$. If $(S_t)_{t\geq0}$ is a strongly continuous semigroup of operators on $Y$ similar to $(T_t)_{t\geq0}$, then $(S_t)_{t\geq0}$ is diskcyclic on $Y$. Moreover, 
$$\mathbb{D}C((S_t)_{t\geq0})=\phi(\mathbb{D}C((T_t)_{t\geq0})).$$
\end{proposition}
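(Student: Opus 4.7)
The plan is to reduce this statement to the earlier results on similar sets of operators, namely Proposition \ref{14} and its corollary, so that no argument specific to the semigroup structure is required. The key observation is that the two semigroups $(T_t)_{t\geq 0}$ and $(S_t)_{t\geq 0}$, regarded as the subsets $\Gamma=\{T_t : t\geq 0\}\subset\mathcal{B}(X)$ and $\Gamma_1=\{S_t : t\geq 0\}\subset\mathcal{B}(Y)$, are similar in the sense introduced in Section 2.

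First, I would verify the similarity. The defining relation $S_t=\phi^{-1}T_t\phi$ rewrites as the intertwining identity $\phi\circ S_t = T_t\circ\phi$ for every $t\geq 0$. Setting $\psi:=\phi^{-1}:X\to Y$, which is again a homeomorphism since $\phi$ is an isomorphism, this is the same as $S_t\circ\psi=\psi\circ T_t$ for all $t\geq 0$. Thus, for every $T\in\Gamma$ there is an $S\in\Gamma_1$ with $S\circ\psi=\psi\circ T$; since $\psi$ is a homeomorphism (hence continuous with dense range), this is exactly the condition that $\Gamma$ and $\Gamma_1$ are similar, with $\psi$ playing the role of the similarity map.

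Second, I would invoke the corollary to Proposition \ref{14}. Since $(T_t)_{t\geq 0}$ is diskcyclic by hypothesis and $\Gamma, \Gamma_1$ are similar, that corollary yields simultaneously that $(S_t)_{t\geq 0}$ is diskcyclic on $Y$ and that $\psi(\mathbb{D}C(\Gamma))=\mathbb{D}C(\Gamma_1)$. Rewriting $\psi=\phi^{-1}$ gives the stated identity $\mathbb{D}C((S_t)_{t\geq 0})=\phi^{-1}(\mathbb{D}C((T_t)_{t\geq 0}))$, which is the displayed equality of the proposition once one reads the similarity map in the correct direction between $X$ and $Y$.

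There is no substantive obstacle: the only point requiring care is the orientation of the intertwining map, since $\phi$ is declared as going from $Y$ onto $X$ whereas the similarity framework of Proposition \ref{14} uses a map in the opposite direction. Once $\psi=\phi^{-1}$ is identified as the similarity map, the proposition becomes a direct specialization of the earlier general result, and no further dynamical or topological input is needed.
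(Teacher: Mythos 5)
Your proposal is correct and follows the same route as the paper, whose entire proof is the citation of Proposition \ref{14} and its corollary. You in fact handle the one delicate point more explicitly than the paper does: since $\phi$ maps $Y$ onto $X$, the similarity map in the sense of Proposition \ref{14} is $\phi^{-1}$, so the displayed identity should properly read $\mathbb{D}C((S_t)_{t\geq0})=\phi^{-1}(\mathbb{D}C((T_t)_{t\geq0}))$, a notational slip in the statement that your argument correctly identifies.
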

\begin{proof}
Direct consequence of Proposition \ref{14}.
\end{proof}
%%%%%%%%%%%%%%%%%%%%%%%%%%%%%%%%%%%%%%%%%%%%%%%%%%%%%%%%%%%%%%%%%%%%%%%%%%%%%%%%%%%%%%%%%%%%%%%%%%%%%%%%%%
\begin{definition}\cite{Pazy}
Let $(T_t)_{t\geq0}$ be a strongly continuous semigroup of operators on a complex topological vector space $X$. For any numbers $\mu\in\mathbb{C}$ and $\alpha > 0$, we define the rescaled strongly continuous semigroup of operators $(S_t)_{t\geq0 }$ by
$$ S_t= e^{\mu t}T_{(\alpha t)}$$
for all $t\geq0$.
\end{definition}
\begin{proposition}
Let $(T_t)_{t\geq0}$ be a diskcyclic strongly continuous semigroup of operators on a complex topological vector space $X$. For any reel number $\mu\geq0$, the rescaled strongly continuous semigroup of operators $(S_t)_{t\geq0 }=(e^{\mu t}T_{ t})_{t\geq0}$ is diskcyclic. 
\end{proposition}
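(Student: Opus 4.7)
The plan is to show that the rescaled semigroup $(S_t)_{t\geq 0} = (e^{\mu t}T_t)_{t\geq 0}$ shares every diskcyclic vector with the original semigroup $(T_t)_{t\geq 0}$. The single key observation is that since $\mu \geq 0$ and $t \geq 0$, the weight $e^{\mu t}$ is always at least $1$; equivalently, $e^{-\mu t} \in (0,1]$. Hence any scalar $\alpha \in \mathbb{D}$ can be written as $\alpha = \beta\, e^{\mu t}$ with $\beta := \alpha e^{-\mu t} \in \mathbb{D}$, so a factor of $e^{\mu t}$ can always be absorbed into the disk parameter without leaving $\mathbb{D}$.

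I will therefore fix a diskcyclic vector $x \in \mathbb{D}C((T_t)_{t\geq 0})$ and establish directly the inclusion
$$\mathbb{D}\mathrm{Orb}((T_t)_{t\geq 0}, x) \subseteq \mathbb{D}\mathrm{Orb}((S_t)_{t\geq 0}, x).$$
Indeed, given any $\alpha \in \mathbb{D}$ and any $t \geq 0$, the scalar $\beta := \alpha e^{-\mu t}$ lies in $\mathbb{D}$ and satisfies $\beta S_t x = \alpha e^{-\mu t} \cdot e^{\mu t} T_t x = \alpha T_t x$. Since the left-hand orbit is dense in $X$ by hypothesis, the right-hand orbit is \emph{a fortiori} dense in $X$, so $x \in \mathbb{D}C((S_t)_{t\geq 0})$ and $(S_t)_{t\geq 0}$ is diskcyclic.

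This argument is essentially Proposition \ref{prop2} applied with coefficients $c_{T_t} = 1 \leq e^{\mu t} = k_{T_t}$, and I expect no real obstacle. The only mild subtlety is that the indexing $T \mapsto k_T$ required by Proposition \ref{prop2} could in principle be ill-defined if $t \mapsto T_t$ is not injective; the direct orbit-containment computation above bypasses this issue and works uniformly, which is why I prefer to spell it out rather than invoke the proposition as a black box.
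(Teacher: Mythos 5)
Your proof is correct and follows essentially the same route as the paper, which simply invokes Proposition \ref{prop2} with $c_{T_t}=1\leq e^{\mu t}=k_{T_t}$; your direct orbit-containment computation is precisely the content of that proposition's proof, unfolded. Your remark about the indexing $T\mapsto k_T$ being potentially ill-defined when $t\mapsto T_t$ is not injective is a fair minor caveat about the paper's citation, and spelling out the computation as you do cleanly sidesteps it.
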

%%%%%%%%%%%%%%%%%%%%%%%%%%%%%%%%%%%%%%%%%%%%%%%%%%%%%%%%%%%%%%%%%%%%%%%%%%%%%%%%%%%%%%%%%%%%%%%%%%%%%%%%%%
\begin{proof}
Let $\mu$ be a positive reel number and $c_t=1\leq e^{\mu t}=k_t $ for all $t\geq0$. To conclude the result, we apply Proposition \ref{prop2} for $(c_t)_{t\geq0}$ and $(k_t)_{t\geq0}$.
\end{proof}%%%%%%%%%%%%%%%%%%%%%%%%%%%%%%%%%%%%%%%%%%%%%%

\section{Diskcyclic $C$-regularized groups of operators}

In this section, we study the particular case when $\Gamma$ stands for a $C$-regularized semigroup. 
%%%%%%%%%%%%%%%%%%%%%%%%%%%%%%%%%%%%%%%%%%%%%%%%%%%%%%%%%%%%%%%%%%%%%%%%%%%%%%%%%%%%%%%%%%%%%%%%%%%%%%%%%%
Recall from \cite{CKM}, that an entire $C$-regularized group of operators is an operator family $(S(z))_{z\in\mathbb{C}}$ on $\mathcal{B}(X)$ that satisfies$:$
\begin{itemize}
\item[$(1)$] $S(0)=C;$
\item[$(2)$] $S(z+w)C = S(z)S(w)$ for every $z,$ $w\in\mathbb{C}$,
\item[$(3)$] The mapping $z \mapsto S(z)x$, with $z\in\mathbb{C}$, is entire for every $x \in X$.
\end{itemize}
%%%%%%%%%%%%%%%%%%%%%%%%%%%%%%%%%%%%%%%%%%%%%%%%%%%%%%%%%%%%%%%%%%%%%%%%%%%%%%%%%%%%%%%%%%%%%%%%%%%%%%%%%%
An example of a diskcyclic $C$-regularized groups of operators on the complex field $\mathbb{C}$ is the following.
%%%%%%%%%%%%%%%%%%%%%%%%%%%%%%%%%%%%%%%%%%%%%%%%%%%%%%%%%%%%%%%%%%%%%%%%%%%%%%%%%%%%%%%%%%%%%%%%%%%%%%%%%%
\begin{example}\label{example}
Let $X=\mathbb{C}$. For all $z\in\mathbb{C}$, let $S(z)x=\exp(z)x$, for all $x\in \mathbb{C}$. $(S(z))_{z\in\mathbb{C}}$ is a $C$-regularized group of operators and we have
$Orb((S(z))_{z\in\mathbb{C}},1)=\{\exp(z) \mbox{ : }z\in\mathbb{C}\},$ which implies that $(S(z))_{z\in\mathbb{C}}$ is diskcyclic and $1$ is a diskcyclic vector for $(S(z))_{z\in\mathbb{C}}$.
\end{example}
%%%%%%%%%%%%%%%%%%%%%%%%%%%%%%%%%%%%%%%%%%%%%%%%%%%%%%%%%%%%%%%%%%%%%%%%%%%%%%%%%%%%%%%%%%%%%%%%%%%%%%%%%%
\begin{remark}
Since all complex topological vector spaces of dimension one are isomorphe, we can deduce, by Using Example \ref{example}, that there exists a diskcyclic $C$-regularized group of operators on each one dimensional space. 
\end{remark}
%%%%%%%%%%%%%%%%%%%%%%%%%%%%%%%%%%%%%%%%%%%%%%%%%%%%%%%%%%%%%%%%%%%%%%%%%%%%%%%%%%%%%%%%%%%%%%%%%%%%%%%%%%
\begin{lemma}\label{lem}
Let 
 $(S(z))_{z\in\mathbb{C}}$ be a diskcyclic $C$-regularized group of operators in $X$.
 If $C$ is of dense range,
 then $Cx\in \mathbb{D}C((S(z))_{z\in\mathbb{C}})$, for all $x\in \mathbb{D}C((S(z))_{z\in\mathbb{C}}).$
\end{lemma}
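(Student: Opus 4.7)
The plan is to deduce this lemma as a direct application of Proposition \ref{p1} with $\Gamma = \{S(z) : z \in \mathbb{C}\}$ and $T = C$. Proposition \ref{p1} requires two hypotheses on $T$: that it has dense range, and that it lies in the commutant $\{\Gamma\}'$. The first is assumed outright in the statement of the lemma, so the only substantive step is to verify that $C$ commutes with every member of the family $(S(z))_{z \in \mathbb{C}}$.

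To establish the commutation relation, I would extract it from the algebraic identity defining a $C$-regularized group, namely
\[ S(z+w)C = S(z)S(w) \qquad \text{for all } z,w \in \mathbb{C}. \]
Specializing $z=0$ and using $S(0)=C$ gives $S(w)C = CS(w)$, while specializing $w=0$ gives the same identity from the other side. Hence $CS(w)=S(w)C$ for every $w\in\mathbb{C}$, so $C\in\{\Gamma\}'$.

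With both conditions in place, Proposition \ref{p1} applies verbatim and yields $Cx \in \mathbb{D}C(\Gamma)$ whenever $x\in\mathbb{D}C(\Gamma)$, which is the conclusion of the lemma. There is no real obstacle here: once one recognizes that the commutation of $C$ with the group is encoded in the very definition of a $C$-regularized group, the lemma is essentially a specialization of the general commuting-operator result for diskcyclic sets already proved in Section~2.
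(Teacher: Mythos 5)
Your proposal is correct and follows essentially the same route as the paper: both verify that $C$ commutes with every $S(z)$ by specializing the identity $S(z+w)C=S(z)S(w)$ at $0$ together with $S(0)=C$, and then invoke Proposition~\ref{p1}. No gaps.
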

%%%%%%%%%%%%%%%%%%%%%%%%%%%%%%%%%%%%%%%%%%%%%%%%%%%%%%%%%%%%%%%%%%%%%%%%%%%%%%%%%%%%%%%%%%%%%%%%%%%%%%%%%%
\begin{proof}
Let $z\in \mathbb{C}$. By conditions $(1)$ and $(2)$ of Definition of  a $C$-regularized group of operators we have
$$ S(z)C=S(0+z)C=S(0)S(z)=CS(z),$$
this means that $C$ commutes with every element of $(S(z))_{z\in\mathbb{C}}$. Hence, $C\in \{(S(z))_{z\in\mathbb{C}}\}^{'}$. By using Proposition \ref{p1}, we deduce 
$Cx\in \mathbb{D}C((S(z))_{z\in\mathbb{C}})$, for all $x\in \mathbb{D}C((S(z))_{z\in\mathbb{C}}).$
\end{proof}
%%%%%%%%%%%%%%%%%%%%%%%%%%%%%%%%%%%%%%%%%%%%%%%%%%%%%%%%%%%%%%%%%%%%%%%%%%%%%%%%%%%%%%%%%%%%%%%%%%%%%%%%%%
%%%%%%%%%%%%%%%%%%%%%%%%%%%%%%%%%%%%%%%%%%%%%%%%%%%%%%%%%%%%%%%%%%%%%%%%%%%%%%%%%%%%%%%%%%%%%%%%%%%%%%%%%%
\begin{proposition}
Let $(S(z))_{z\in\mathbb{C}}$ be a diskcyclic $C$-regularized group on a complex Banach space.
If $C=I$ the identity operator on $X$, then $S(z)x\in \mathbb{D}C((S(z))_{z\in\mathbb{C}})$ for all $x\in Rec((S(z))_{z\in\mathbb{C}})$ and for all $z\in\mathbb{C}$.
\end{proposition}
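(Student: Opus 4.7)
The plan is to reduce the claim to Proposition \ref{p1} by exploiting the group structure granted by $C = I$, and then to bridge from recurrent vectors to diskcyclic vectors via a transfer argument using the continuity of the individual $S(v)$.

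First, I would unpack what $C = I$ does to the three axioms of a $C$-regularized group. Condition $(1)$ becomes $S(0) = I$, condition $(2)$ collapses to the classical group law $S(z+w) = S(z)S(w)$ for all $z, w \in \mathbb{C}$, and condition $(3)$ is the joint entireness. Hence $\Gamma := \{S(w) : w \in \mathbb{C}\}$ is an abelian group of invertible operators with $S(z)^{-1} = S(-z)$. In particular, every $S(z)$ is surjective (so has dense range) and, since $S(z)S(w) = S(z+w) = S(w)S(z)$, every $S(z)$ lies in $\{\Gamma\}'$. This alone puts $S(z)$ into the hypothesis of Proposition \ref{p1}, which immediately yields $S(z)x \in \mathbb{D}C(\Gamma)$ for every $x \in \mathbb{D}C(\Gamma)$.

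Second, I would carry out the transfer step needed to cover an $x$ that is only assumed to be recurrent. Given $x \in Rec(\Gamma)$, fix a sequence $(z_k)$ realising the recurrence, $S(z_k)x \to x$. For any $v \in \mathbb{C}$ and $\alpha \in \mathbb{D}$, continuity of $S(v)$ together with the group law gives
\[
\alpha\, S(v + z_k - z)\, S(z)x = \alpha\, S(v)\, S(z_k)x \longrightarrow \alpha\, S(v)x.
\]
Since the left side belongs to $\mathbb{D}Orb(\Gamma, S(z)x)$, this places $\alpha S(v)x$ in $\overline{\mathbb{D}Orb(\Gamma, S(z)x)}$ for every $\alpha \in \mathbb{D}$ and $v \in \mathbb{C}$. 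Hence
\[
\mathbb{D}Orb(\Gamma, x) \subseteq \overline{\mathbb{D}Orb(\Gamma, S(z)x)}.
\]

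Third, I would close the argument by combining the recurrence of $x$ with the global diskcyclicity of $\Gamma$ to infer that $\mathbb{D}Orb(\Gamma, x)$ is dense in $X$; the inclusion above then forces $\overline{\mathbb{D}Orb(\Gamma, S(z)x)} = X$, i.e.\ $S(z)x \in \mathbb{D}C(\Gamma)$. The main obstacle is precisely this third step: under the authors' convention, one must verify that $Rec(\Gamma) \subseteq \mathbb{D}C(\Gamma)$ when $\Gamma$ is a diskcyclic group. Once this is secured, the proof is just the group observation of step one feeding into Proposition \ref{p1}, with the recurrence playing the role of bringing $x$ into the diskcyclic regime.
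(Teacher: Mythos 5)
Your first step is exactly the paper's entire proof: the authors merely remark that $C=I$ turns condition $(2)$ into the group law $S(z+w)=S(z)S(w)$, so each $S(z)$ lies in $\{(S(z))_{z\in\mathbb{C}}\}'$ and (being invertible with inverse $S(-z)$) has dense range, and then they invoke Proposition \ref{p1}. For $x\in \mathbb{D}C((S(z))_{z\in\mathbb{C}})$ your argument and theirs therefore coincide.

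The complication you isolate in your second and third steps is real, but it is a defect of the statement rather than of your proof. The set $Rec((S(z))_{z\in\mathbb{C}})$ is never defined anywhere in the paper, and the paper's one-line proof tacitly reads the hypothesis as $x\in\mathbb{D}C((S(z))_{z\in\mathbb{C}})$; no bridge from recurrence to diskcyclicity is offered. Writing $\Gamma$ for the family, your transfer inclusion $\mathbb{D}Orb(\Gamma,x)\subseteq\overline{\mathbb{D}Orb(\Gamma,S(z)x)}$ for recurrent $x$ is correct and goes beyond what the authors write, but the remaining step you flag, namely $Rec(\Gamma)\subseteq\mathbb{D}C(\Gamma)$, cannot hold in general: under any standard notion of recurrence the vector $x=0$ is recurrent (since $S(z_k)0=0\to 0$), while $S(z)0=0$ is never diskcyclic on a nontrivial space. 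So the proposition is only provable, and is only actually proved in the paper, with $Rec$ read as $\mathbb{D}C$; under that reading your step one is already a complete proof identical to the paper's, and your honest identification of the unresolved inclusion is the correct diagnosis of what the printed statement leaves unjustified.
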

%%%%%%%%%%%%%%%%%%%%%%%%%%%%%%%%%%%%%%%%%%%%%%%%%%%%%%%%%%%%%%%%%%%%%%%%%%%%%%%%%%%%%%%%%%%%%%%%%%%%%%%%%%
\begin{proof}
By remarking that in this case we have $S(z)\in \{(S(z))_{z\in\mathbb{C}})\}^{'}$ and using Proposition \ref{p1}.
\end{proof}
%%%%%%%%%%%%%%%%%%%%%%%%%%%%%%%%%%%%%%%%%%%%%%%%%%%%%%%%%%%%%%%%%%%%%%%%%%%%%%%%%%%%%%%%%%%%%%%%%%%%%%%%%%
\begin{definition}
Let $(S(z))_{z\in\mathbb{C}}$ be a $C$-regularized group on a complex topological vector space.
 Given another complex topological vector space $X$ and an isomorphism $\phi$ from $Y$ onto $X$, the $C$-regularized group $(h(z))_{z\in\mathbb{C}}$ on Y , defining by
 $$h(z)=\phi^{-1}S(z)\phi$$
 is said to be similar to $(S(z))_{z\in\mathbb{C}}$.
\end{definition}
%%%%%%%%%%%%%%%%%%%%%%%%%%%%%%%%%%%%%%%%%%%%%%%%%%%%%%%%%%%%%%%%%%%%%%%%%%%%%%%%%%%%%%%%%%%%%%%%%%%%%%%%%%
\begin{proposition}
Let $(S(z))_{z\in\mathbb{C}}$ be a diskcyclic $C$-regularized group of operators on a complex topological vector space $X$.
 If $(h(z))_{z\in\mathbb{C}}$ is a  $C$-regularized group of operators on a complex topological vector space $Y$ similar to $(S(z))_{z\in\mathbb{C}}$, then
$(h(z))_{z\in\mathbb{C}}$ is diskcyclic on $Y$. Moreover,
$$\mathbb{D}C((S(z))_{z\in\mathbb{C}})=\phi(\mathbb{D}C((h(z))_{z\in\mathbb{C}})).$$
\end{proposition}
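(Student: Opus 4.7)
The plan is to reduce this statement directly to Proposition \ref{14} and its corollary on similarity of sets of operators. The defining relation $h(z) = \phi^{-1} S(z) \phi$ can be rewritten as
$$\phi \circ h(z) = S(z) \circ \phi \qquad \text{for every } z \in \mathbb{C},$$
so $\phi : Y \longrightarrow X$ intertwines the two families in exactly the sense required by the definition of similar sets of operators that precedes Proposition \ref{14}. Since $\phi$ is an isomorphism, it is in particular a homeomorphism with dense range, and therefore the sets $\Gamma := \{h(z) : z\in\mathbb{C}\} \subset \mathcal{B}(Y)$ and $\Gamma_1 := \{S(z) : z\in\mathbb{C}\} \subset \mathcal{B}(X)$ are similar in the sense defined in Section 2.

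Next I would invoke the corollary to Proposition \ref{14}, which states that similarity is an equivalence for diskcyclicity and that the diskcyclic vector sets are exchanged by the intertwining homeomorphism. Since $(S(z))_{z\in\mathbb{C}}$ is diskcyclic on $X$ by hypothesis, this corollary yields at once that $\Gamma = (h(z))_{z\in\mathbb{C}}$ is diskcyclic on $Y$, together with the identity
$$\phi(\mathbb{D}C((h(z))_{z\in\mathbb{C}})) = \mathbb{D}C((S(z))_{z\in\mathbb{C}}),$$
which is the second conclusion.

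The argument is essentially a citation, so there is no genuine obstacle. The only point deserving attention is bookkeeping the direction of similarity: Proposition \ref{14} is stated with the intertwining map going from the ambient space of $\Gamma$ to that of $\Gamma_1$, whereas here $\phi$ is described as going from $Y$ to $X$; once one identifies $\Gamma$ with the family on $Y$ and $\Gamma_1$ with the family on $X$, the matching with the hypotheses of Proposition \ref{14} is immediate and the conclusion follows.
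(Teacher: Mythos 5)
Your proposal is correct and follows exactly the paper's own route: the paper proves this proposition simply by declaring it a direct consequence of Proposition \ref{14} (via its corollary on similarity), which is precisely the reduction you carry out. Your extra care with the direction of the intertwining map $\phi$ (applying the result in both directions, or equivalently invoking the corollary with $\phi$ and $\phi^{-1}$) is the right bookkeeping and makes the citation airtight.
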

%%%%%%%%%%%%%%%%%%%%%%%%%%%%%%%%%%%%%%%%%%%%%%%%%%%%%%%%%%%%%%%%%%%%%%%%%%%%%%%%%%%%%%%%%%%%%%%%%%%%%%%%%%
\begin{proof}
Direct consequence of Proposition \ref{14}.
\end{proof}
%%%%%%%%%%%%%%%%%%%%%%%%%%%%%%%%%%%%%%%%%%%%%%%%%%%%%%%%%%%%%%%%%%%%%%%%%%%%%%%%%%%%%%%%%%%%%%%%%%%%%%%%%%

By Theorem \ref{t1}, every disk transitive $C$-regularized group is diskcyclic. In the following we prove the converse is also holds. 
\begin{theorem}
Let $X$ be a complex topological vector space and
 $(S(z)_{z\in\mathbb{C}})$ a $C$-regularized group such that $C$ has dense range. 
The following assertions are equivalent$:$
\begin{itemize}
\item[$(i)$] $(S(z)_{z\in\mathbb{C}})$  is diskcyclic; 
\item[$(ii)$] $(S(z)_{z\in\mathbb{C}})$ is disk transitive.
\end{itemize}
\end{theorem}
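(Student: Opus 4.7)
The direction $(ii)\Rightarrow(i)$ I would deduce immediately from Theorem \ref{t1}, as the author already notes just before the statement; under the implicit assumption that the hypotheses of that theorem apply, there is nothing further to do. The substance lies in $(i)\Rightarrow(ii)$, where I cannot simply invoke Theorem \ref{so}, because the $C$-regularized identity $S(z)S(w)=S(z+w)C$ carries a spurious factor of $C$: the family $\{S(z):z\in\mathbb{C}\}$ is not closed under the kind of composition required by that theorem.

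My plan rests on two structural features of $(S(z))_{z\in\mathbb{C}}$. First, the family is abelian, since both $S(z)S(w)$ and $S(w)S(z)$ equal $S(z+w)C$. Second, every $S(z)$ has dense range: from $S(z)S(-z)=S(0)C=C^{2}$, and the fact that $C$ continuous with dense range forces $C^{2}$ to have dense range as well, the range of $S(z)$ contains a dense subset of $X$. Combining these two observations with Proposition \ref{p1}, I conclude that if $x$ is diskcyclic for $(S(z))_{z\in\mathbb{C}}$, then $S(w_{1})x$ is diskcyclic for every $w_{1}\in\mathbb{C}$.

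Now take nonempty open sets $U,V\subset X$ and a diskcyclic vector $x$. Shrinking $U$ if necessary so that $0\notin U$, I use density of the disk orbit of $x$ to pick $\alpha\in\mathbb{D}\setminus\{0\}$ and $w_{1}\in\mathbb{C}$ with $\alpha S(w_{1})x\in U$, and set $y:=S(w_{1})x$, which remains diskcyclic by the paragraph above. The key trick is then to note that multiplication by the nonzero scalar $|\alpha|$ is a homeomorphism of $X$, so the rescaled disk orbit
$$\{\gamma S(w)y:|\gamma|\leq|\alpha|,\ w\in\mathbb{C}\}=|\alpha|\cdot\mathbb{D}Orb((S(z))_{z\in\mathbb{C}},y)$$
is still dense in $X$. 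Hence I can find $\gamma$ with $|\gamma|\leq|\alpha|$ and $w\in\mathbb{C}$ with $\gamma S(w)y\in V$; setting $\alpha':=\gamma/\alpha\in\mathbb{D}$, linearity of $S(w)$ gives $\alpha'S(w)(\alpha y)=\gamma S(w)y\in V$, while $\alpha y\in U$, so $\alpha'S(w)(U)\cap V\neq\emptyset$ and disk transitivity is proved.

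The main obstacle I anticipate is precisely the constraint that is invisible in Theorem \ref{so} but active here: the outer scalar must lie in $\mathbb{D}$. The fix encoded above is to introduce the factor $\alpha$ first, by representing a point of $U$ as $\alpha y$ with $y$ diskcyclic, and then to exploit the rescaling homeomorphism to force the second scalar $\gamma$ into the smaller disk $|\alpha|\mathbb{D}$; only after that restricted density step does the quotient $\gamma/\alpha$ automatically sit inside $\mathbb{D}$.
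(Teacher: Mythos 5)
Your argument is correct and reaches the same conclusion by a genuinely different route. The paper's proof of $(i)\Rightarrow(ii)$ works directly with the functional equation: it picks $\alpha S(z_1)x\in C^{-1}(U)$ (nonempty and open because $C$ is continuous with dense range) and $\beta S(z_2)x\in V$, then uses $S(z_1-z_2)S(z_2)=S(z_1)C$ to land the single vector $S(z_1-z_2)S(z_2)x$ in both $\frac{1}{\alpha}U$ and $\frac{1}{\beta}S(z_1-z_2)(V)$, concluding that $U\cap\frac{\beta}{\alpha}S(z_1-z_2)(V)\neq\emptyset$ --- essentially the same mechanism as Theorem \ref{so}, with $C^{-1}(U)$ absorbing the extra factor of $C$. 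You instead route everything through Proposition \ref{p1}: commutativity of the family and the dense range of each $S(z)$ (via $S(z)S(-z)=C^2$) let you propagate diskcyclicity to $S(w_1)x$, and you then control the quotient scalar by first meeting $V$ with the rescaled orbit $|\alpha|\cdot\mathbb{D}Orb((S(z))_{z\in\mathbb{C}},y)$, which is dense because multiplication by a nonzero scalar is a homeomorphism. Your version is arguably more careful on exactly the point the paper glosses over: the paper never checks that its quotient $\beta/\alpha$ has modulus at most one, whereas your rescaling step guarantees $|\gamma|\leq|\alpha|$ by construction. Two small remarks: to ensure $\alpha'=\gamma/\alpha$ is \emph{nonzero} (as the paper's definition of disk transitivity requires $\alpha\in\mathbb{D}\setminus\{0\}$) you should also shrink $V$ away from $0$, exactly as you did for $U$; and your caveat about $(ii)\Rightarrow(i)$ is apt, since Theorem \ref{t1} needs $X$ second countable and Baire, a hypothesis the theorem as stated omits --- the paper's own proof has the same unstated assumption.
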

%%%%%%%%%%%%%%%%%%%%%%%%%%%%%%%%%%%%%%%%%%%%%%%%%%%%%%%%%%%%%%%%%%%%%%%%%%%%%%%%%%%%%%%%%%%%%%%%%%%%%%%%%%
\begin{proof}
$(i)\Rightarrow(ii)$ : This implication is due to Theorem \ref{t1}.\\
$(i)\Rightarrow(ii)$ Assume that $(S(z))_{z\in\mathbb{C}}$ is diskcyclic, we will prove that $(S(z))_{z\in\mathbb{C}}$ is disk transitive.
Let $x\in \mathbb{D}C((S(z))_{z\in\mathbb{C}})$.
Let $U$ and $V$ be two nonempty open subsets of $X$, then there exist $\alpha$, $\beta$ $z_1$, $z_2\in\mathbb{C}$ such that
\begin{equation}\label{e11}
\alpha S(z_1)x\in C^{-1}(U) \hspace{0.6cm}\mbox{ and }\hspace{0.6cm}\beta S(z_2)x\in V. 
\end{equation}
Let $z_3=z_1-z_2$. By $\ref{e11}$, we have
$$\alpha S(z_3)(S(z_2)x)\in U\hspace{0.3cm}\mbox{ and }\hspace{0.3cm}\beta S(z_3)(S(z_2)x)\in  S(z_3)(V),$$
which implies that
$U\cap\frac{\beta}{\alpha} S(z_3)(V)\neq \emptyset.$
Hence, $(S(z))_{z\in\mathbb{C}}$ is a disk transitive $C$-regularized group.
\end{proof}
%%%%%%%%%%%%%%%%%%%%%%%%%%%%%%%%%%%%%%%%%%%%%%%%%%%%%%%%%%%%%%%%%%%%%%%%%%%%%%%%%%%%%%%%%%%%%%%%%%%%%%%%%%
%%%%%%%%%%%%%%%%%%%%%%%%%%%%%%%%%%%%%%%%%%%%%%%%%%%%%%%%%%%%%%%%%%%%%%%%%%%%%%%%%%%%%%%%%%%%%%%%%%%%%%%%%%

A necessary condition for a $C$-regularized group of operators to be diskcyclic is is due to next theorem. This result is similar to Lemma \ref{lm1} in the case of a strongly continuous semigroup of operators.
%%%%%%%%%%%%%%%%%%%%%%%%%%%%%%%%%%%%%%%%%%%%%%%%%%%%%%%%%%%%%%%%%%%%%%%%%%%%%%%%%%%%%%%%%%%%%%%%%%%%%%%%%%
\begin{theorem}
Let $(S(z)_{z\in\mathbb{C}})$ be a diskcyclic $C$-regularized group on a Banach infinite dimensional space $X$. If $x \in X$ is a diskcyclic vector of $(S(z)_{z\in\mathbb{C}})$, then the following assertions hold:
\begin{itemize}\label{l1}
\item[$(1)$] $S(z) x\neq 0$ for all $z\in\mathbb{C}$;
\item[$(2)$] The set $\{\alpha S(z) x \mbox{ : }\alpha\in\mathbb{D}\mbox{, }z\in\mathbb{C}\mbox{; }\vert z\vert>\vert \omega_0\vert\}$ is dense in $X$ for all $\omega_0\in\mathbb{C}$.
\end{itemize}
\end{theorem}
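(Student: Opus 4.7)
The plan is to adapt the structure of Lemma \ref{lm1} to the complex-parameter setting, exploiting the functional equation $S(z+w)C=S(z)S(w)$ and the compactness of closed disks in $\mathbb{C}$. The two parts will be handled by rather different arguments.

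For assertion $(1)$ I would argue by contradiction: assume $S(z_0)x=0$ for some $z_0\in\mathbb{C}$. Setting $z=0$ in the identity $S(z+w)C=S(z)S(w)$ gives $S(w)C=CS(w)$, so $C$ commutes with every $S(w)$. Applying $S(z+z_0)C=S(z)S(z_0)$ to $x$ yields $S(z+z_0)(Cx)=0$ for all $z\in\mathbb{C}$, i.e.\ $S(w)(Cx)=0$ for every $w\in\mathbb{C}$. Using commutativity we then get $C(\alpha S(z)x)=\alpha S(z)Cx=0$ for every $\alpha\in\mathbb{D}$ and $z\in\mathbb{C}$. Since $x$ is diskcyclic, $\{\alpha S(z)x\}$ is dense in $X$, and continuity of $C$ forces $C\equiv 0$. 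The functional equation then collapses to $S(z)S(w)=0$, so for any $y\in X$, picking $\alpha_n\in\mathbb{D}$ and $z_n\in\mathbb{C}$ with $\alpha_nS(z_n)x\to y$ gives $S(w)y=\lim\alpha_n S(w)S(z_n)x=0$. Hence $S(w)\equiv 0$ for every $w$, so the diskcyclic orbit is $\{0\}$, which cannot be dense in the nontrivial space $X$, a contradiction.

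For assertion $(2)$, fix $\omega_0\in\mathbb{C}$ and suppose toward contradiction that
$$A:=\{\alpha S(z)x:\alpha\in\mathbb{D},\ |z|>|\omega_0|\}$$
is not dense in $X$. Then there is a nonempty bounded open set $U\subseteq X\setminus\overline{A}$. Because the full disk orbit is dense and splits as $A\cup B$ with
$$B:=\{\alpha S(z)x:\alpha\in\mathbb{D},\ |z|\le|\omega_0|\},$$
we obtain $U\subseteq\overline{B}$. Since the map $(\alpha,z)\mapsto\alpha S(z)x$ is continuous (entirety of $z\mapsto S(z)x$ gives joint continuity) and $\overline{\mathbb{D}}\times\{|z|\le|\omega_0|\}$ is compact, the image, and hence $\overline{B}$, is compact in $X$. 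Thus $\overline{U}$ is compact. But $U$ contains an open ball of the Banach space $X$, whose closed counterpart cannot be compact in infinite dimensions by Riesz's lemma, a contradiction.

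The main obstacle is the step in $(1)$ forcing $C=0$: the nontrivial point is to transport the single vanishing $S(z_0)x=0$ into the uniform vanishing $S(w)Cx=0$ via the regularized functional equation, and then exploit that $C$ commutes with the whole group to push the vanishing onto the dense orbit. Once $C=0$ is in hand, the functional equation degenerates and the rest is a continuity argument. The argument for $(2)$ is a clean transcription of the semigroup proof, with the half-line replaced by a closed complex disk.
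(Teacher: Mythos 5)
Your proof is correct. For assertion $(2)$ you follow essentially the same route as the paper: split the dense disk orbit as $A\cup B$ with $B=\{\alpha S(z)x : \alpha\in\mathbb{D},\ \vert z\vert\le\vert\omega_0\vert\}$, deduce $U\subset\overline{B}$ from $U\cap\overline{A}=\emptyset$, show $\overline{B}$ is compact, and invoke Riesz; your compactness step (continuous image of the compact set $\overline{\mathbb{D}}\times\{\vert z\vert\le\vert\omega_0\vert\}$) is in fact tidier than the paper's, which first extracts the lower bound $0<m_1\le\Vert S(z)x\Vert$ from part $(1)$ in order to bound $\vert\alpha\vert$ by $M/m_1$ --- a detour that is redundant here since $\vert\alpha\vert\le1$ already. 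For assertion $(1)$ you genuinely diverge. The paper argues that $S(z_1)x=0$ forces $S(z_2)(Cx)=S(z_2-z_1)S(z_1)x=0$ for every $z_2$ and concludes that this ``contradicts $Cx\in\mathbb{D}C((S(z))_{z\in\mathbb{C}})$''; that final step rests on Lemma \ref{lem}, which requires $C$ to have dense range, a hypothesis not present in the theorem's statement. Your argument bypasses this: from $S(w)(Cx)=0$ for all $w$ and the commutation $CS(z)=S(z)C$ you get that $C$ kills the dense disk orbit of $x$, hence $C=0$ by continuity; then $S(z)S(w)=S(z+w)C=0$ forces each $S(w)$ to vanish on the dense orbit and hence on $X$, so the orbit of $x$ collapses to $\{0\}$, contradicting its density. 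This yields part $(1)$ under exactly the stated hypotheses, at the modest cost of two additional density-plus-continuity passes.
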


\begin{proof}
$(1)$ If $z_1\in\mathbb{C}$ is such that $S(z_1)x=0$, then $S(z_1)(Cx)=0$. Let $z_2\in \mathbb{C}$. Then
$$ S(z_2)(Cx)=S(z_2-z_1+z_1)(Cx)=S(z_2-z_1)(S(z_1)x)=0, $$
which contradicts that $Cx\in \mathbb{D}C(S(z))_{z\in\mathbb{C}}).$\\
$(2)$ Suppose that there exists $\omega_0\in\mathbb{C}$ such that $A:=\{\alpha S(z)x \mbox{ : }\alpha\in\mathbb{D}\mbox{, }z\in\mathbb{C}\mbox{; }\vert z \vert>\vert \omega_0 \vert\}$ is not dense in X. Hence there exists a bounded open set $U$ such that $U \cap \overline{A}= \emptyset$.
Therefore we have
$$ U\subset \overline{\{\alpha S(z)x \mbox{ : }\alpha\in\mathbb{D}\mbox{, }z\in\mathbb{C}\mbox{; }\vert z \vert\leq\vert \omega_0 \vert\}} $$
by using the relation 
{\footnotesize
$$ X=\overline{\{\alpha S(z) x \mbox{ : }\alpha\in\mathbb{D}\mbox{, }z\in\mathbb{C}\}}=\overline{\{\alpha S(z)x \mbox{ : }\alpha\in\mathbb{D}\mbox{, }z\in\mathbb{C}\mbox{; }\vert z \vert>\vert \omega_0 \vert\}}\cup \overline{\{\alpha S(z)x \mbox{ : }\alpha\in\mathbb{D}\mbox{, }z\in\mathbb{C}\mbox{; }\vert z \vert\leq\vert \omega_0 \vert\}}. $$}
Since $S(z)x$ is continuous with $z$ and $S(z)x \neq0$  holds for all $z\in\mathbb{C}$ by $(1)$, there exist $m_1$, $m_2 >0$ such that $0 < m_1 \leq\Vert S(z)x\Vert < m_2$ for $z\in\mathbb{C}$ with $\vert z\vert\leq\vert \omega_0\vert$.
There exists $M > 0$ such that $\Vert y\Vert \leq M$ for any $y \in U$ because $U$ is bounded. So we have $$U \subset \overline{\left\lbrace \alpha S(z) x \mbox{ : }\vert z\vert\leq\vert \omega_0\vert \mbox{, }\vert \alpha \vert\leq \frac{M}{m_1} \right\rbrace  },$$
 which means that $\overline{U}$ is compact. Hence $X$ is finite dimensional, which contradicts the fact that $X$ is infinite dimensional.
\end{proof}

\end{document}